\documentclass[letterpaper,11pt,reqno]{amsart} 
\usepackage[margin=1in]{geometry} 
\usepackage{mathrsfs,xfrac} 
\usepackage[colorlinks=true,linkcolor=blue,citecolor=blue,urlcolor=blue]{hyperref} 
\usepackage{amsmath,amssymb,amsthm,amsfonts,amsbsy,latexsym,dsfont, color,graphicx}
\usepackage{xcolor}
\usepackage{subcaption}
\usepackage[foot]{amsaddr}
\usepackage{caption}
\usepackage{regexpatch}
\usepackage{comment}
\usepackage{todonotes}
\usepackage{algorithm}
\usepackage{algpseudocode}
\usepackage[shortlabels]{enumitem}
\newenvironment{enumeratei}{\begin{enumerate}[\upshape i)]}{\end{enumerate}}

\makeatletter
\xpatchcmd{\@todo}{\setkeys{todonotes}{#1}}{\setkeys{todonotes}{inline,#1}}{}{}
\makeatother
\tikzset{
		big dot/.style={
			circle, inner sep=0pt, 
			minimum size=1.2mm, fill=black
		}
	}

\newtheorem{thm}{Theorem}[section]
\newtheorem{lem}[thm]{Lemma}
\newtheorem{cor}[thm]{Corollary}

\newtheorem{conj}[thm]{Conjecture}

\newtheorem{oques}[thm]{Open question}

\theoremstyle{definition}

\newtheorem{rem}[thm]{Remark}

\newtheorem{ex}[thm]{Example}
\renewcommand{\le}{\leqslant}  
\renewcommand{\ge}{\geqslant}

\newcommand{\wt}{\widetilde}

\newcommand{\ind}{\mathds{1}}

\newcommand{\norm}[1]{\left\Vert#1\right\Vert}
\newcommand{\abs}[1]{\left\vert#1\right\vert}

\newcommand{\ie}{{i.e.,}}

\newcommand{\eg}{{e.g.,}}
 
\newcommand{\equald}{\stackrel{\mathrm{d}}{=}}

\def\qed{ \hfill $\blacksquare$}  
   \let\gd=\delta 
           \let\go=\omega   \let\gs=\sigma  
  
\let\gC=\Gamma \let\gD=\Delta

\newcommand{\cE}{\mathcal{E}}\newcommand{\cF}{\mathcal{F}}
\newcommand{\cH}{\mathcal{H}}

\newcommand{\cM}{\mathcal{M}}

\newcommand{\cT}{\mathcal{T}}

\newcommand{\mvx}{\boldsymbol{x}}

\newcommand{\mvgd}{\boldsymbol{\delta}}

\newcommand{\mvpsi}{\boldsymbol{\psi}}   

\newcommand{\bR}{\mathbb{R}}

\newcommand{\dN}{\mathds{N}}

\newcommand{\dR}{\mathds{R}}

\DeclareMathOperator{\E}{\mathds{E}}
\DeclareMathOperator{\pr}{\mathds{P}}

\DeclareMathOperator{\var}{Var}
\DeclareMathOperator{\cov}{Cov}

\providecommand{\dwas}{\textrm{d}_\mathcal{W}} 

\newcommand{\pam}{\ensuremath{\mathrm{PAM}_n}}
\DeclareMathOperator{\N}{N}
\DeclareMathOperator{\poi}{Poisson}
\DeclareMathOperator{\dtv}{d_{TV}}

\begin{document}
\title[Triangle counts in LPAM]{Limiting distributions of triangle counts in linear preferential attachment models}
\author[Dey]{Partha S.~Dey}
\author[Terlov]{Grigory Terlov}

\begin{abstract}
We derive distributional approximations for the number of triangles in the linear preferential attachment model $\pam(m,\delta)$, where $m\ge 2$ and $\delta>-m$, with explicit rates of convergence. The limiting distribution undergoes a phase transition from Gaussian to another nontrivial distribution, which we characterize explicitly. The asymptotic behavior is governed by the interplay between the hidden random environment and the mean-field interaction effect.
In particular, our analysis also yields a continuous phase transition in the expected number of triangles as $\delta$ varies.
\end{abstract}
\date{\today}
\maketitle
\setcounter{tocdepth}{1}\tableofcontents

\section{Introduction and main results}

In the study of random graphs and complex networks, the number of triangles plays a central role in quantifying local clustering and higher-order connectivity patterns. It provides insight into the structural properties and is closely tied to community formation and transitivity in real-world systems. For instance, the standard measure of clustering in a network, the clustering coefficient, is defined as the ratio of the number of triangles to the number of wedges (\ie\ paths of length 2).

Preferential attachment models (PAM), introduced by Barab\'asi and Albert in 1999~\cite{BA99}, are among the most studied classes of evolving random graphs. Their appeal lies in a simple “rich-get-richer’’ growth mechanism, where new vertices are more likely to attach to vertices of large degree. This generates scale-free degree sequences that typically exhibit power-law tails, consistent with observations across a range of networks, including citation graphs, the World Wide Web, and various social networks. At the same time, PAMs remain sufficiently tractable from the perspective of rigorous analysis, especially when the attachment function is linear.

In this paper, we study a variant of the linear preferential attachment model $\pam(m, \delta)$, where $n\in \dN$ denotes the number of vertices. 
We refer the reader to~\cite[Chapter~5]{RemcoBook2} for a systematic overview of the most common variants. 
Since different variants of the model typically yield comparable large-scale (mean) behavior, the choice of the variant primarily affects which technical tools are available. On the other hand, it is not a priori clear that the same is true for distributional approximations. The variant considered in the present work is constructed recursively as follows:
\begin{algorithmic}
 \State \textbf{Initial data:} $m\in \dN$ and $\delta>-m$.
 
 \State $\mathbf{n=2}$:~$\mathrm{PAM}_2(m, \delta)$ consists of two vertices $\{1,2\}$ with $m$ edges between them.
 
 \State $\mathbf{n=k}$:~Given $\mathrm{PAM}_{k-1}(m, \delta)$, the $k$-th vertex is added with $m$ edges $\{e_{k,\ell}\}_{\ell\in[m]}$. The edges connect sequentially to vertices of $\mathrm{PAM}_{k-1}(m, \delta)$ with probabilities proportional to an affine function of the current degree, which is updated after each new connection; in other words, for $\ell\in[m-1]$ and $v\in[k-1]$,
 \[
 \pr(e_{k,\ell+1} \text{ connects to } v)=
 \frac{\deg^{(\ell)}_{k-1}(v)+\delta}{2m(k-2) + \ell + (k - 1)\delta},
 \]
 where $\deg_{k-1}^{(\ell)}(v)$ denotes the degree of vertex $v\in \mathrm{PAM}_{k-1}(m, \delta)$ after $\ell$ edges from the $k$-th vertex have been connected to the graph.
\end{algorithmic}
The advantage of this variant is that it admits an exchangeable representation via a hidden environment given by a family of independent Beta-distributed random variables, see Section~\ref{sec:model}.

For PAM$_n(m,\delta)$, the expected number of triangles is well understood~\cite{BollobasRiordan, EggemannNoble, SubgraphsPAM} and is summarized below in Theorem~\ref{thm:ET}. The original results are stronger than what we state, as they also provide the constants in terms of $m$ and $\delta$. We observe that some of the authors technically considered different variations of the model, but analogous results can be easily recovered for the model above; in fact, we will present a short proof of this theorem in Section~\ref{sec:graphical_comp}.

\begin{thm}[{\cite{BollobasRiordan,EggemannNoble,SubgraphsPAM}}]\label{thm:ET}
 Let $m\ge2$ be a fixed integer and $\delta> -m$ be a fixed real number. Let $T_n$ be the number of triangles in $\pam(m,\delta)$. Then
\begin{align*}
 \E T_n\cong \begin{cases}
 \log n & \text{ if } \delta>0\\
 (\log n)^3 & \text{ if } \delta=0\\
 n^{|\delta|/(2m+\delta)}\log n & \text{ if } \delta\in(-m,0).
 \end{cases}
    \end{align*}
\end{thm}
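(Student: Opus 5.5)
We plan to compute $\E T_n$ through the exchangeable (P\'olya urn) representation of $\pam(m,\delta)$ alluded to above and developed in Section~\ref{sec:model}. In that representation there are independent $\psi_j\sim\mathrm{Beta}(m+\delta,(2j-3)m+(j-1)\delta)$, with $\psi_1=1$. We set $S_k=\prod_{j=k+1}^{n}(1-\psi_j)$. Conditionally on $\mvpsi$, the $m$ edges of each vertex $k$ choose their endpoints independently. Each edge of $k$ attaches to $j<k$ with probability $\varphi_j S_j/S_{k-1}$, where $\varphi_j=\psi_j S_j$. A triangle on vertices $i<j<k$ needs one edge from $j$ to $i$, one edge from $k$ to $i$, and one edge from $k$ to $j$, the last two being distinct edges of $k$. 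Hence
\[
\E T_n\asymp\sum_{i<j<k\le n}\E\Big[\frac{\psi_iS_i}{S_{j-1}}\cdot\frac{\psi_iS_i}{S_{k-1}}\cdot\frac{\psi_jS_j}{S_{k-1}}\Big],
\]
up to constants from $m$ (the choice of labels) and lower-order multi-edge corrections. Since the $\psi$'s are independent, every expectation factorizes into a product of Beta moments of the form $\E[\psi_\ell^{a}(1-\psi_\ell)^{b}]$ with $a\in\{0,1,2\}$ and $|b|\le 2$.

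The next step is to estimate these moments. For large $\ell$, $\psi_\ell$ has mean about $(m+\delta)/((2m+\delta)\ell)$, and
\[
\E[(1-\psi_\ell)^{b}]=1-\frac{b(m+\delta)}{(2m+\delta)\ell}+O(\ell^{-2}).
\]
Products over $\ell$ therefore behave like powers $(x/y)^{b\chi}$ with $\chi=(m+\delta)/(2m+\delta)$. Tracking the exponents of $S$ over each range, we expect the summand to behave like
\[
\frac{1}{ijk}\Big(\frac{j}{i}\Big)^{1-2\chi}\Big(\frac{k}{i}\Big)^{0}\cdots .
\]
More precisely, the exponents should combine to $\E[\cdot]\asymp i^{-2+2(1-\chi)\cdot?}$. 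The clean way is to write the summand as $i^{-\alpha}j^{-\beta}k^{-\gamma}$ with $\alpha+\beta+\gamma=3$. Here $i$ gets $\psi_i^2$, contributing $i^{-2}$, and the ratios $S_i/S_{j-1}$, $S_i/S_{k-1}$, $S_j/S_{k-1}$ contribute $(j/i)^{\chi}$, $(k/i)^{\chi}$, $(k/j)^{\chi}$ up to sign conventions. This gives roughly $i^{-2+2\tau}j^{-1}k^{-2\tau}$ with $\tau=1-\chi=m/(2m+\delta)$; we will verify these signs carefully. The sign of $2\tau-1=-\delta/(2m+\delta)$ produces the three regimes. If $\delta>0$, then $2\tau<1$, so the sum over $i$ converges at small $i$, the sum over $k\ge j$ gives $j^{1-2\tau}$, and the remaining sum behaves like $\sum_j j^{-1}$, giving $\log n$. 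If $\delta=0$, all exponents equal $1$ and $\sum_{i<j<k}(ijk)^{-1}\asymp(\log n)^3$. If $\delta<0$, the sum over $i$ is dominated by $i=O(1)$, giving a factor $\sum_k k^{-2\tau}$ with the $j$ sum producing $n^{1-2\tau}\log n$ overall, where $1-2\tau=|\delta|/(2m+\delta)$.

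The main obstacle is making the moment asymptotics uniform. The Beta moments at small indices, notably $\psi_i^2$ for bounded $i$ and $\psi_1=1$, are not close to their asymptotic forms. So we would bound them above and below by constants times the power-law expressions, using Gamma-function ratios $\Gamma(\ell+a)/\Gamma(\ell+b)\asymp \ell^{a-b}$ uniformly in $\ell\ge1$. This gives two-sided bounds of the form $c\,i^{-\alpha}j^{-\beta}k^{-\gamma}\le\E[\cdot]\le C\,i^{-\alpha}j^{-\beta}k^{-\gamma}$, and $\cong$ only requires order of magnitude. We would also check that configurations using multi-edges, or one edge of $k$ serving both roles, contribute at most the same order; this holds since they only add further factors of $\psi$. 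Finally, we would carry out the triple sums in the three regimes as elementary integral comparisons.
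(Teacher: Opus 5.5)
Your overall route is the paper's. Via the urn representation, $\E T_n=m^2(m-1)\sum_{i<j<k}\E\,\theta_{ij}\theta_{ik}\theta_{jk}$. This holds exactly for the labelled count: for $\ell_2\neq\ell_3$ the three indicators depend on distinct, conditionally independent uniforms, so no multi-edge corrections are needed. Beta-moment asymptotics then turn the summand into a power law, and the triple sum is evaluated in three regimes; the paper packages these last two steps in Lemma~\ref{lem:graphic}, which gives $F_3(2-2\beta,1,2\beta)$.

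However, the decisive step of your proposal is wrong as written. The factors $S_i/S_{j-1}$, $S_i/S_{k-1}$, $S_j/S_{k-1}$ are products of $(1-\psi_t)$'s, so they decay like $(i/j)^{\chi}$, $(i/k)^{\chi}$, $(j/k)^{\chi}$. Factorizing over $t$,
\[
\E\,\theta_{ij}\theta_{ik}\theta_{jk}=\E\psi_i^2\prod_{t=i+1}^{j-1}\E(1-\psi_t)^2\cdot\E\psi_j(1-\psi_j)\prod_{t=j+1}^{k-1}\E(1-\psi_t)^2\cong i^{2\chi-2}\,j^{-1}\,k^{-2\chi},
\]
with $\chi=(m+\delta)/(2m+\delta)$. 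This is not $i^{-2+2\tau}j^{-1}k^{-2\tau}$ with $\tau=1-\chi$. Your exponents would in fact reverse the regimes:
\begin{itemize}
\item For $\delta>0$ you have $2\tau<1$, so $\sum_{k>j}k^{-2\tau}\asymp n^{1-2\tau}$ (not $j^{1-2\tau}$ as you assert), giving $n^{\delta/(2m+\delta)}\log n$.
\item For $\delta<0$, $1-2\tau=\delta/(2m+\delta)<0$ (not $|\delta|/(2m+\delta)$), and the sum is only $\log n$.
\end{itemize}
So your regime analysis is inconsistent with your own summand and reaches the right answers only by accident.

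With the correct summand the three regimes go as follows.
\begin{itemize}
\item For $\delta>0$ ($2\chi>1$), the $k$-sum gives $j^{1-2\chi}$ and the $j$-sum then gives $i^{1-2\chi}$, leaving $\sum_i i^{-1}\asymp\log n$. The sum over $i$ does not converge at small $i$ here; the logarithm comes from triangles whose three vertices have comparable ages, each dyadic scale contributing order one.
\item For $\delta=0$ the summand is $(ijk)^{-1}$ and the sum is $\asymp(\log n)^3$.
\item For $\delta<0$ ($2\chi<1$), the $k$-sum gives $n^{1-2\chi}$ and the $j$-sum gives $\log(n/i)$. The $i$-sum converges because $2-2\chi>1$, so the total is $\asymp n^{1-2\chi}\log n$ with $1-2\chi=|\delta|/(2m+\delta)$.
\end{itemize}
Your plan for uniform two-sided moment bounds is fine, whether via Gamma-ratio estimates or, as in the paper, via Lemma~\ref{lem:powerk} together with $\sum_t\E\psi_t^2<\infty$. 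One minor correction: the attachment probability in your text should be $\varphi_j/S_{k-1}=\psi_jS_j/S_{k-1}=\theta_{jk}$, not $\varphi_jS_j/S_{k-1}$; your display already has it right.
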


Here and throughout the paper, $\cong$ denotes equality up to multiplying by some fixed constant that may depend on parameters of the model. In general, we use the big-$O$ and small-$o$ notation as in the usual definition, often replacing big-$O$ with $\lesssim$ for simplicity.

\subsection{Main results}
Our first main result explains the nature of the phase transition in the mean behavior from Theorem~\ref{thm:ET}, by interpolating between different parameter regimes. To be precise, we consider a more general model constructed with the same steps as $\pam$, but where the parameter $\delta$ also depends on the current size of the system. Formal definition is given in Section~\ref{sec:phase_transition}.

\begin{thm}\label{thm:phase_transition}
    Let $m\ge2$, $c\in\bR$, $\alpha>0$ be fixed and $\delta_k:=2mc\,(\log k)^{-\alpha}$ for $k\ge 2
    .$
    Let $T_n$ be the number of triangles in $\pam(m,(\delta_k)_{k\in[n]})$. Then
    \begin{align*}
\E T_{n} \cong\begin{cases}
(\log n)^{1+2\alpha} &\text{ when }  \alpha\in(0,1), c>0,\\
(\log n)^3 &\text{ when }\alpha>1 \text{ or }\alpha=1, c>-1,\\
(\log n)^{3}\log\log n &\text{ when }\alpha=1, c=-1,\\
 (\log n)^{2-c} &\text{ when }\alpha=1, c<-1\\
 (\log n)^{1+\alpha} e^{-\frac{c}{1-\alpha} (\log n)^{1-\alpha}} &\text{ when }\alpha\in (0,1), c<0.
\end{cases}
\end{align*}
In particular, fixing $\alpha=1$, the phase transition for $\E T_{n}$ occurs at $c=-1$.
\end{thm}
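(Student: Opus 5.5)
Our approach is a direct computation of $\E T_n$ from the recursive construction, in the same way as the short proof of Theorem~\ref{thm:ET} in Section~\ref{sec:graphical_comp}, but keeping track of how the normalising products depend on $k$. A triangle on vertices $i<j<k$ appears in one of two ways. Either two of the $m$ edges of $k$ land on $i$ and on $j$, and some edge of $j$ lands on $i$. Or one edge of $k$ lands on $i$, one on $j$, and $j$ is already joined to $i$; multi-edge contributions are of lower order. So it suffices to estimate
\[
 \E T_n \cong \sum_{i<j<k\le n} \E\!\left[\frac{(D_i(k)+\delta_k)(D_j(k)+\delta_k)}{(S_k)^2}\,\ind\{i\sim j\}\right],
\]
where $D_v(k)$ is the degree of $v$ just before step $k$ and $S_k=2m(k-1)+\sum_{l\le k}\delta_l\approx 2mk$.

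The first step is a product formula for conditional mean degrees. One step of the model multiplies $\E[D_v+\delta]$ by roughly $1+\frac{m}{2mk+\sum_{l\le k}\delta_l}$. Hence
\[
 \E[D_v(k)+\delta_k]\approx (m+\delta_v)\prod_{l=v}^{k}\Bigl(1+\frac{1}{2l(1+\bar\delta_l/2m)}\Bigr),
 \qquad \bar\delta_l=\frac1l\sum_{r\le l}\delta_r\approx \delta_l .
\]
This gives an exponent $\frac12\bigl(1-c(\log l)^{-\alpha}+O((\log l)^{-2\alpha})\bigr)$ per unit of $\log l$, so
\[
 \E D_v(k)\approx \sqrt{k/v}\;\exp\Bigl(-\tfrac{c}{2}\int_{\log v}^{\log k} t^{-\alpha}\,dt\Bigr).
\]
The same product calculation handles the joint moments $\E[D_iD_j\ind\{i\sim j\}]$, because the degree processes evolve as a Pólya-type urn with explicit pair correlations. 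The second-order terms $O(t^{-2\alpha})$ are integrable exactly when $\alpha>1/2$. For $\alpha\le 1/2$ I would keep them, but they only change constants when $c>0$. When $c<0$ they change lower-order terms inside the exponential, which requires care; the statement's exponent presumably absorbs this.

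Substituting these estimates, the triple sum reduces, via $i=e^{x}$, $j=e^{y}$, $k=e^{z}$ with $x<y<z\le L:=\log n$, to an integral of the form
\[
 \int_{0<x<y<z<L} \exp\Bigl(-c\int_x^{y} t^{-\alpha}dt - c\int_{x}^{z}t^{-\alpha}dt+\cdots\Bigr)\,dx\,dy\,dz .
\]
The powers $\sqrt{k/v}$ cancel against the normalisations $1/(ij)$ and $1/k^2$ only in the critical case $\delta\equiv 0$. That is exactly why $\delta=0$ gives $(\log n)^3$, and why the nonzero exponent $\frac{|\delta|}{2m+\delta}$ appears in Theorem~\ref{thm:ET}. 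The tilt weight decides where the mass sits.
\begin{itemize}
 \item If $\alpha>1$ the tilt is integrable and we get $L^3$.
 \item If $\alpha=1$ the tilt is $(y/x)^{-c}$-type; the integral is $L^3$ for $c>-1$, $L^3\log L$ at $c=-1$, and $L^{2-c}$ for $c<-1$, with mass at $x=O(1)$.
 \item If $\alpha<1$ and $c>0$ the weight $e^{-c(z^{1-\alpha}-x^{1-\alpha})/(1-\alpha)}$ confines $z-x$ to a window of width $\asymp x^{\alpha}$. This yields $\int_0^L x^{2\alpha}dx\cong L^{1+2\alpha}$.
 \item If $\alpha<1$ and $c<0$ the mass concentrates at $x$ near $0$ and $z$ near $L$. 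The weight gives $e^{|c|L^{1-\alpha}/(1-\alpha)}$ times a Laplace-type window factor $L^{\alpha}$ from the $z$-integral, together with the free $y$-integral of length $L$, giving $L^{1+\alpha}$.
\end{itemize}

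The main obstacle is making the approximation of the products rigorous with errors of multiplicative order $1+o(1)$ uniformly over $i<j<k$. The dangerous cases are $\alpha<1$, where errors can accumulate inside the exponential, and small $i$, where $\delta_i$ is not small and the asymptotic expansion fails. I would handle small $i$ by truncating at $i\ge i_0$ and bounding the rest crudely using $\delta_k>-m$. I would control the products via $\log(1+u)=u+O(u^2)$ together with explicit bounds on $\sum_l l^{-1}(\log l)^{-2\alpha}$. Matching lower bounds follow by restricting the integral to the dominant region identified above.
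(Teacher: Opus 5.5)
Your route differs from the paper's, and the case analysis in your bullets matches the paper's. The paper never works with degrees. It uses the P\'olya urn representation with independent $\psi_t\sim\mathrm{Beta}(m+\delta_t,\,m(2t-3)+\sum_{r<t}\delta_r)$. Under it, the mean number of triangles with oldest vertex $i$ factorizes exactly as $\sum_{i<j<k\le n}\Delta h_i\,\Delta g_j\,\Delta f_k$, where $\Delta f_k=\prod_{t=2}^{k-1}\E(1-\psi_t)^2$, $\Delta g_j\cong 1/j$ and $\Delta h_i=\E\psi_i^2/\Delta f_{i+1}$. The only analytic input is then the asymptotics of $\log\Delta f_n$.

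In your recursive picture, the sentence ``the same product calculation handles the joint moments'' carries most of the weight. You need $\E[(D_i(k)+\delta_i)(D_j(k)+\delta_j)\ind\{j\to i\}]$ up to constants. That requires a second-moment recursion for the pair, plus a size-biased moment of $D_i(j)$, since $\pr(j\to i)$ is proportional to $D_i(j)+\delta_i$. This is feasible but is the bulk of the work, and the urn representation gives it for free.

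Two smaller slips. The numerators should carry $\delta_i,\delta_j$, not $\delta_k$. Your displayed integrand is also off: the half-exponents from $D_i(k)$, $D_j(k)$ and $D_i(j)$ combine into the single tilt $\exp(-c\int_x^z t^{-\alpha}\,dt)$. With the extra $-c\int_x^y$ you wrote, the case $\alpha=1$, $c<-1$ would give $L^{2-2c}$. Your bullets do use the correct weight.

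Your worry about second-order terms is justified, and the paper's proof is incomplete at the same spot. It claims $\log\Delta f_n=-\log n-c\int_2^n\frac{dx}{x(\log x)^\alpha}+O(1)$. In fact $-\log\E(1-\psi_t)^2=\frac1t\cdot\frac{2m+2\delta_t}{2m+\bar\delta_t}+O(t^{-2})$ with $\bar\delta_t=\frac1t\sum_{r\le t}\delta_r=\delta_t(1+O(1/\log t))$. So, up to summable errors, the per-step correction is $\frac1t\cdot\frac{cu_t}{1+cu_t}$ with $u_t=(\log t)^{-\alpha}$. The dropped term $c^2u_t^2/t$ is not summable when $\alpha\le 1/2$.

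For $c>0$ this is harmless, because only $\Delta f_k/\Delta f_{i+1}$ over the window $\log(k/i)\lesssim(\log i)^\alpha$ matters. For $c<0$ and $\alpha\le 1/2$ it cannot be absorbed into $\cong$. The correct exponent is $\int_{s_0}^{\log n}\frac{|c|\,ds}{s^\alpha-|c|}=\frac{|c|}{1-\alpha}(\log n)^{1-\alpha}+\frac{c^2}{1-2\alpha}(\log n)^{1-2\alpha}+\cdots$. Hence $\E T_n$ exceeds the stated formula by a factor $\exp\bigl(\frac{c^2}{1-2\alpha}(\log n)^{1-2\alpha}(1+o(1))\bigr)$ for $\alpha<1/2$, and by a factor $\cong(\log n)^{c^2}$ at $\alpha=1/2$.

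So your step ``the statement's exponent presumably absorbs this'' would fail. Carried out honestly, your plan proves the last line of the theorem only for $\alpha\in(1/2,1)$, and yields the corrected exponent otherwise.

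Finally, for $c<0$ the weights $\delta_k=2mc(\log k)^{-\alpha}$ violate $\delta_k>-m$ for small $k$ unless $|c|$ is small. So ``bounding crudely using $\delta_k>-m$'' requires redefining finitely many $\delta_k$, which only changes constants; the paper glosses over this as well.
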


We believe that with techniques similar to ours, one can also show fluctuation phase transitions in the model. For example, in terms of fluctuations, fixing $\alpha=1$, the transition should occur at $c=1/2$, see open question~\ref{ques:phase_trans}.

Our second main result establishes distributional limits for the number of triangles in PAM$(m,\delta)$ and gives explicit bounds on the rates of convergence. This addresses a question raised in~\cite[page 10]{SubgraphsPAM}.
Similarly to the behavior of the mean given by Theorem~\ref{thm:ET}, there are three different regimes depending on the sign of $\delta$. In the case when $\delta>0$, the limiting distribution is normal, and the analysis required for the proof is completely different from that in the remaining cases.

To measure the rate of convergence, we will use the $L^{1}$-Wasserstein distance $\dwas$. For two random variables $X$ and $Y$, it is defined by
\[
\dwas(X,Y):=\sup_{h:\,\mathrm{Lip}(h)\le 1} \left|\E h(X)-\E h(Y)\right|.
\]

 Let $\psi_1\equiv 1$ and $\{\psi_i\}_{i\ge 2}$ be independent
 $\mathrm{Beta}(m+\delta,(2m+\delta)(i-1)-m)$ random variables. Define the nonnegative sequence of real numbers $(b_{i})_{i\ge 1}$ by
 \begin{align}\label{def:bi}
 b_{i}:=\begin{cases}
        1 &i=1\\
     \norm{\psi_i}_2^2\cdot \prod_{t=2}^{i} \norm{1-\psi_t}_{2}^{-2} &i\ge 2.
 \end{cases}
 \end{align}
 Observe that $(\psi_{i},b_{i})_{i\ge 2}$ depend on the parameters $m,\delta$ and define 
 \begin{align}
 s_n=
 \begin{cases} 
 \sqrt{\log n}, & \text{if } \delta>0,\\[1ex]
 \frac14(\log n)^2, & \text{if } \delta=0,\\[1ex]
 \left({m}/{|\delta|}-1\right)n^{|\delta|/(2m+\delta)}\log n, & \text{if } \delta\in(-m,0).
 \end{cases}
 \end{align}
 For $\delta \le 0$, we interpret $s_n$ as the expected number of triangles that contain the starting vertex~$1$. 

\begin{thm}\label{thm:main}
 Let $m\ge2$ and $\delta>-m$ be fixed. Let $T_n$ be the number of triangles in $\pam(m,\delta)$. We have $$\var(T_{n})\cong s_{n}^{2}.$$ Moreover, there exists a constant $\gamma =\gamma (m,\delta)\in (0,\infty)$ such that the following holds.
 \begin{enumerate}[\upshape (i)]
 \item\label{thm:main_delta_pos} When $\delta>0$, we have 
 \begin{align*}
 \dwas\left(\frac{T_n-\E T_n}{\gamma {s_n}}, Z\right)\lesssim \frac{1}{\sqrt{\log n}},
 \end{align*}
 where $Z\sim \N(0,1)$.
 \item\label{thm:main_delta_zero} When $\delta=0$, we have 
 \begin{align*}
 \dwas\left(\frac{T_n-\E T_n}{\gamma {s_n}}, 
\sum_{i=1}^\infty b_{i}\left(\frac{\psi_i^2}{\norm{\psi_i}_2^2}\prod_{j=i+1}^\infty \frac{(1-\psi_j)^2}{\norm{1-\psi_j}_2^2}-1\right)\right)
\lesssim \frac{1}{\sqrt{\log n}}.
 \end{align*}
 \item\label{thm:main_delta_neg} When $\delta\in(-m,0)$, we have 
\begin{align*}
 \dwas\left(\frac{T_n}{\gamma  s_n}, 
\sum_{i=1}^\infty b_{i}\cdot \frac{\psi_i^2}{\norm{\psi_i}_2^2}\prod_{j=i+1}^\infty \frac{(1-\psi_j)^2}{\norm{1-\psi_j}_2^2}\right)
\lesssim \frac{1}{\sqrt{\log n}}.
\end{align*}
 \end{enumerate}
\end{thm}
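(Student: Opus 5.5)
We will work throughout with the exchangeable (P\'olya urn / Beta) representation of $\pam(m,\delta)$ from Section~\ref{sec:model}. Conditionally on the environment $\mvpsi=(\psi_i)_{i\ge1}$, set $S_k=\prod_{j=k+1}^{n}(1-\psi_j)$ and $\varphi_k=\psi_k S_k$. Then each edge $e_{k,\ell}$ independently picks an endpoint $u<k$ with probability of order $\varphi_u/S_{k-1}$. In this description the triangle count is a sum of indicators over triples $u<v<w$ and edge labels. Conditionally on $\mvpsi$, these indicators are independent except when triples share edges.

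We decompose
\[
T_n-\E T_n=\bigl(T_n-\E[T_n\mid\mvpsi]\bigr)+\bigl(\E[T_n\mid\mvpsi]-\E T_n\bigr),
\]
and estimate the two pieces separately.

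\textbf{The conditional mean.} We write $\E[T_n\mid\mvpsi]=\sum_{u<v<w}c\,\varphi_u\varphi_v\,S_{v-1}^{-1}S_{w-1}^{-2}\cdots$. Using the law of large numbers $S_k\approx (k/n)^{\chi}$ with $\chi=(m+\delta)/(2m+\delta)$, we sum over $v,w$ first. This shows that the sum over $u$ is dominated by terms of the form $\psi_u^2\prod_{j>u}(1-\psi_j)^2$ times deterministic weights. These weights grow like $n^{|\delta|/(2m+\delta)}$ if $\delta<0$, are logarithmic if $\delta=0$, and are negligible if $\delta>0$. The normalization by $\norm{\psi_i}_2^2\prod\norm{1-\psi_j}_2^{-2}$ is exactly what produces $b_i$ and the random series in (ii)–(iii).

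To pass from finite $n$ to the infinite product and infinite series, we use a martingale/$L^2$ argument. The products $\prod_{j=i+1}^{n}(1-\psi_j)^2/\norm{1-\psi_j}_2^2$ form a positive mean-one martingale in $n$, which is bounded in $L^2$ because $\sum\var(\psi_j)<\infty$ up to the appropriate scaling. Summing the tail errors over $i$ gives a Wasserstein error of order $1/\sqrt{\log n}$. This error comes from the $u$-range where $\log$-factors have not yet stabilized: the ratio of the inner sums $\sum_{v,w}$ to their limiting $\log n$ (resp.\ $(\log n)^2/4$) has relative error $O(1/\log n)$ in $L^2$, and the square root of that gives the stated rate.

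\textbf{The conditional fluctuation.} For $T_n-\E[T_n\mid\mvpsi]$, we bound its conditional variance by a direct covariance computation: only pairs of triangles sharing an edge contribute, and sums over shared structures give $O(\log n)$ for all $\delta$. Hence this piece is $O(\sqrt{\log n})$ in $L^2$. For $\delta\le0$ this is $o(s_n)$ with ratio at most $1/\sqrt{\log n}$ (for $\delta=0$ the ratio is $(\log n)^{-3/2}$), so this piece only enters the error term.

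For $\delta>0$ it is the leading term, and in this case the conditional mean fluctuation has variance $O(1)$. We then apply Stein's method conditionally on $\mvpsi$, for example a dependency-graph or size-bias bound for sums of locally dependent indicators. Triangles are essentially formed by a vertex $w$ whose two edges hit $u,v$ which are themselves adjacent, so the local dependence is weak and the third/fourth moment terms sum to $O(\log n)$. This gives $\dwas\lesssim \log n/(\log n)^{3/2}$. The variance lower bound $\var(T_n)\cong s_n^2$ comes from the diagonal terms, which are of order $\E T_n\cong\log n$.

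\textbf{Combining and the main obstacle.} For $\delta<0$ the conditional mean has a nondegenerate limit, and for $\delta=0$ its centered version does. In each case we combine the pieces with the triangle inequality for $\dwas$; $\gamma$ collects the deterministic constants. The main obstacle is the precise asymptotics of the conditional mean with an error of order $1/\sqrt{\log n}$ uniformly in the random environment. I would first control $\E\bigl|S_k/(k/n)^\chi\,-\,\text{limit}\bigr|^2$ via the martingale structure, and then show that the dominant contribution is localized to triangles through vertices with index $u=O(1)$ (relative to $n$) when $\delta\le 0$.
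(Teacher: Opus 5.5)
\textbf{Where your proposal matches the paper.} Your architecture is the paper's. You condition on $\mvpsi$ and separate the conditional mean $m^2(m-1)\Delta_n$ from the conditional fluctuation; the paper splits the latter further into the uncorrelated pieces $T_n^{(1)},T_n^{(2)},T_n^{(3)}$. For $\delta\le 0$ you approximate $\Delta_n-\E\Delta_n$ in $L^2$ by $\tilde s_n\sum_i b_i(X_i\prod_{j>i}Y_j-1)$ through a telescoping martingale expansion. For $\delta>0$ you run Stein's method given $\mvpsi$.

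\textbf{The gap: your bound on the conditional fluctuation for $\delta\le 0$ is false.} The shared-structure sums are not $O(\log n)$. The variance of the cubic part $T_n^{(3)}$ alone is of order $\E T_n$ (Lemma~\ref{lem:Tn3}), i.e.\ $(\log n)^3$ at $\delta=0$ and $n^{1-2\beta}\log n$ for $\delta<0$. Pairs of triangles $\{i,j,k\},\{i,j,k'\}$ sharing the edge $\{i,j\}$ make up $\var(T_n^{(1)})$, with leading graph $F_4(3-3\beta,2-\beta,2\beta,2\beta)$. For $\delta<0$ they contribute order $n^{2-4\beta}$, where $1-2\beta=|\delta|/(2m+\delta)$. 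Hence $\norm{T_n-\E[T_n\mid\mvpsi]}_2$ has the following size:
\begin{itemize}
\item at $\delta=0$ it is of order $(\log n)^{3/2}\cong s_n/\sqrt{\log n}$, so the ratio to $s_n$ is $(\log n)^{-1/2}$, not $(\log n)^{-3/2}$;
\item for $\delta<0$ it is of order $n^{1-2\beta}\cong s_n/\log n$.
\end{itemize}
The conclusion you need (this piece costs at most $O(1/\sqrt{\log n})$ in $\dwas$) survives. It requires the actual graphical computations, however (Lemmas~\ref{lem:Tn3}--\ref{lem: lin term}).

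\textbf{Consequence: your account of where the rate comes from is wrong.} At $\delta=0$ the $1/\sqrt{\log n}$ comes precisely from $T_n^{(3)}$. The paper's conditional-mean errors are all $O(s_n/\log n)$ in $L^2$. These are $\norm{\sum_i r_{i,1}}_2,\norm{\sum_i r_{i,2}}_2\lesssim f_n$, the replacement of $\mu^{(n)}_{i,\cdot,\cdot}$ by $\tilde s_n b_i$, and tails of order $s_n\log n/\sqrt n$. An $L^2$ bound transfers to $\dwas$ directly, so there is no ``square root'' mechanism on the conditional-mean side. Also, nothing here can be uniform in the environment; all estimates are in $L^2$ over $\mvpsi$.

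\textbf{For $\delta>0$ your route differs from the paper's.} The paper builds an exchangeable pair by resampling one uniformly chosen slot $U_{J,L}$. Since $T_n^{(3)}$ is a sum of products of centered functions of three distinct conditionally independent slots, linearity holds exactly: $\E_{\mvpsi}(W_n'-W_n\mid\cF_n)=-\tfrac{3}{nm}W_n$. Only the second- and third-moment terms of Theorem~\ref{th:st_exch} then need bounding, and these reduce to Lemma~\ref{cor:cycles_order}.

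A dependency-graph bound is not ``weakly local'' here. Given $\mvpsi$, a triangle depends on every triangle that uses one of its three slots $(k,\ell)$, whatever their other endpoints. Each neighbourhood therefore contains order $n^2$ potential triangles, and any max-degree bound fails. You would need a weighted local-dependence bound that exploits centering. The paper also remarks that a size-bias coupling became very involved.

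\textbf{Missing de-conditioning step.} The conditional normal approximation has random variance $\gs_n^2(\mvpsi)$. You must show that $\gs_n^2(\mvpsi)/\log n$ concentrates around a constant. The paper obtains this from $\E\abs{\Delta_n-\gs_n^2(\mvpsi)}=O(1)$ together with $\var(\Delta_n)=O(1)$ (Lemma~\ref{lem: centering term}).
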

We present our interpretation of the limiting distribution in parts~\ref{thm:main_delta_zero} and~\ref{thm:main_delta_neg} alongside simulations in Subsection~\ref{sec:inerpr_lim}. 

\begin{rem}
    The value of the constant $\gamma$ is given by 
 \begin{equation}\label{eq:gamma}
     \gamma=\begin{cases}
         \frac{m}{\delta}\sqrt{\frac{(m-1)(m+\delta)(m+\delta+1)}{2m+\delta}}&\text{ if }\delta> 0,\\
         \qquad m^2(m-1) &\text{ if }\delta\le 0. 
     \end{cases}
 \end{equation}
\end{rem}
\begin{rem}[Optimality]
 Since for $\delta>0$ the rate of convergence in Theorem~\ref{thm:main} is of order $1/\sqrt{\var(T_n)}$, it is reasonable to expect it to be optimal, although it still requires establishing an anti-concentration result (or a local limit theorem). For $\delta\le 0$, our rate of convergence is far from the reciprocal of the standard deviation, and so we do not yet have a strong enough understanding to conjecture the optimal rate. Establishing optimality in the former case and improving the convergence rate in the latter case would be an interesting research direction. 
\end{rem}

\subsection{Our techniques}\label{sec:technique}
Establishing limiting distribution results for evolving graphs is often more challenging than for their static counterparts. There are two primary reasons for that. 

First, many common approaches for showing limiting distributions in discrete models seem to fail for preferential attachment models due to the global temporal dependence inherent in their evolution and the lack of exchangeability between edges. Thus, besides a few notable exceptions, the common techniques in this setting reduce to martingale convergence theorems and renewal-type arguments, both of which require careful control of relevant graph statistics. This is precisely the second primary reason for the difficulty of deriving distributional limits. The combinatorics get involved and require novel ideas even to derive the limiting behavior of the mean, \eg~see~\cite{BollobasRiordan,EggemannNoble,SubgraphsPAM}. Naturally, tackling higher moments multiplies the complexity of the task.

It is of particular interest to expand applications of other techniques, such as Stein's method, to non-static models. To prove distributional convergence via Stein's method, one usually needs to either establish control on the change in the statistic of interest under a perturbation in the system via some sort of coupling or derive a quantified bound on the dependency between the random variables, see~\cite{Ross11, ChenGoldstein11}.
However, in models such as $\pam(m,\delta)$, any small perturbation such as resampling a single edge may cause a significant change in the later evolution of the graph, and there is no clear way to bound the ``range" of dependence.
A recent successful example of such an application of this method in a non-static model is~\cite{BHJT25}, where the authors showed the central limit theorem for subgraph counts in the uniform attachment models by using the Stein--Chen method for Poisson approximation as an intermediate step. 
In their case, the model shares many properties with $\pam(m,\delta)$, except for the absence of bias in the attachment scheme. 
It is plausible that their approach could be adapted to $\pam(m,\delta)$ when $\delta>0$; however, the computations became extremely involved, and we concluded that the size-biased coupling is not the most natural approach.

With these challenges in mind, our approach has several key ideas that we would like to highlight, 
\begin{enumeratei}
    \item exchangeable description of the model, 
    \item centered-edge decomposition,
    \item graphical computation,
    \item applications of Stein's method and $L_2$-bounds.
\end{enumeratei}
First, as we already mentioned, the sequential attachment variant of $\pam(m,\delta)$ admits an exchangeable description in terms of hidden weights introduced in~\cite{urngraph}, see Subsection~\ref{sec:model}. In particular, almost all of the analysis is carried out conditioned on the values of these weights.
\begin{itemize}[leftmargin=*, itemsep=8pt]
    \item \textit{Centered-edge decomposition.} Next, we work with the centered-edge decomposition of the subgraph counts. The idea of working with centered subgraph counts has been particularly useful for simplifying covariance computations in various settings; see, \eg~\cite {KaurRollin21,DT_CCLT,TadasCLTclique,fang2025conditionalcentrallimittheorems}. Focusing on the number of triangles $T_n$ in some random graph on $[n]$, we can rewrite it as follows
\begin{align}
    T_n-\E T_n=\wt{T}_{n}^{(0)}+\wt{T}_{n}^{(1)}+\wt{T}_{n}^{(2)} + \wt{T}_{n}^{(3)},\label{eq:center_edge_intro}
\end{align}
where, denoting by $\ind_{i\leftrightarrow j}$ the indicator random variable that vertex $i$ is connected to $j$ and by $\overline{\ind_{i\leftrightarrow j}}$ its centered version, we have
\begin{align*}
&\wt{T}_{n}^{(0)}:=\sum_{i<j<k}\pr(i\leftrightarrow j)\pr(i\leftrightarrow k)\pr(j\leftrightarrow k), 
&&\wt{T}_{n}^{(1)}:=\sum_{i<j}\left(\sum_{k\neq i,j}\pr(i\leftrightarrow j)\pr(j\leftrightarrow k)\right)\overline{\ind_{i\leftrightarrow j}},\\
&\wt{T}_{n}^{(2)}:=\sum_{i<j}\pr(i\leftrightarrow j)\sum_{k\neq i,j} \overline{\ind_{i\leftrightarrow k}}\cdot\overline{\ind_{j\leftrightarrow k}}, &&\wt{T}_{n}^{(3)}=\sum_{i<j<k}\overline{\ind_{i\leftrightarrow j}}\cdot\overline{\ind_{i\leftrightarrow k}}\cdot\overline{\ind_{j\leftrightarrow k}}.
\end{align*}
Working with centered-edge counts has several advantages. Firstly, any two subgraphs that do not share any edges are automatically uncorrelated, which crucially simplifies the computation of the variances. Secondly, it highlights the competition between the terms $\wt{T}_{n}^{(0)}, \wt{T}_{n}^{(1)}, \wt{T}_{n}^{(2)},$ and $\wt{T}_{n}^{(3)}$ that otherwise might be overlooked. As we will see throughout this paper, depending on the parameter regime, different terms dominate, simplifying the analysis and providing intuition for the model's behavior and possible phase transitions. For example, in the case~\ref{thm:main_delta_pos} of Theorem~\ref{thm:main}, the dominating term is $\wt{T}_{n}^{(3)}$ while in the remaining cases it is $\wt{T}_{n}^{(0)}$. We interpret it as a competition between the mean-field interaction in the triangle formation and the effect of the hidden random environment. In particular, understanding the dominant term is key to establishing Theorem~\ref{thm:phase_transition}. 

In Appendix~\ref{sec:appendix_RGIV}, we illustrate the connection between the centered-edge decomposition and the classical Hoeffding decomposition in a different model of independent interest. Namely, we consider the random graph with immigrating vertices (RGIV), introduced by Aldous and Pittel in~\cite{RGIV}. The analysis of this model is significantly less involved than that of PAM, while the model still exhibits nontrivial competition among terms in the centered-edge decomposition of triangle counts, or even edge counts. Hence, using similar ideas, we establish limiting distributions for these counts in various regimes.

\item \textit{Graphical computation.} To address the second challenge of unraveling combinatorics, we developed an intuitive book-keeping mechanism, which we call the graphical computation lemma, see Section~\ref{sec:graphical_comp}. While it is possible to do all of the computations by hand, in our opinion, it would take already involved computations to another level, making them extremely difficult to follow. On the other hand, many of the sums one has to compute share a similar structure and geometric interpretation. Therefore, the main idea of this tool is to convert the problem into a diagram reading, which will essentially evaluate the sum. In particular, with this lemma, one can quickly derive Theorem~\ref{thm:ET} in all three regimes of parameters, again, ignoring the constants that the original results carefully computed. It can also be used to derive analogous results for other subgraphs such as cycles and cliques, see Lemma~\ref{lem:cycle}. Furthermore, by simplifying the computations in this way, we gain insight into the structure of the dominating terms. This led to much of the progress, even in results that technically do not explicitly rely on the graphical computation lemma, \eg~Theorem~\ref{thm:phase_transition}. Finally, it also enabled us to give detailed predictions for the limiting distributions of cycles and cliques given in Open Questions~\ref{ques:cycle} and~\ref{ques:cliques}.

\item \textit{Stein's method and $L_2$-techniques.} Finally, we approach case~\ref{thm:main_delta_pos} of Theorem~\ref{thm:main} in a different way than the other cases. When $\delta >0$, the mean-field interaction dominates the behavior, and we can apply Stein's method of exchangeable pairs. In the remaining cases~\ref{thm:main_delta_zero} and~\ref{thm:main_delta_neg}, we reduce the problem to a sum of martingale differences and then carefully analyze the error terms in the $L_2$-norm.
\end{itemize}

\subsection{Organization}
This paper is organized as follows. In Section~\ref{sec:support}, we present an exchangeable description of $\pam(m,\delta)$, the centered-edge decomposition for the number of triangles, and several supporting lemmas. Section~\ref{sec:graphical_comp} is dedicated to the graphical computation lemma. Section~\ref{sec:fluct_order}, establishing the dominating term from the centered-edge decomposition by computing the respective orders of fluctuation. Section~\ref{sec:clt} is dedicated to the proof of part~\ref{thm:main_delta_pos} of Theorem~\ref{thm:main}, while Section~\ref{sec:delta_neg} treats the remaining two parts of it.
Section~\ref{sec:phase_transition} covers the phase transition in the mean with respect to $\delta$ and the proof of Theorem~\ref{thm:phase_transition}.
Finally, we finish with closing remarks in Section~\ref{sec:closing}, where we discuss other variants of the preferential attachment models, other subgraph counts, and present simulations to interpret the limiting distribution from parts~\ref{thm:main_delta_zero} and~\ref{thm:main_delta_neg} of Theorem~\ref{thm:main}.

\section{Centered-edge decomposition}\label{sec:support}

\subsection{Urn graph and centered-edge decomposition}\label{sec:model}
We briefly review the construction of the P\'olya urn graph introduced in~\cite{urngraph}. For a more detailed explanation, we refer to~\cite[Chapter 5]{RemcoBook2}. Let $\psi_1\equiv 1$ and, for all $i\ge 2$, let $\psi_i$ be independent $\mathrm{Beta}(m+\delta,(2m+\delta)(i-1)-m)$ random variables and define
\begin{align*}
\varphi_{j,n}:=\psi_j\prod_{t=j+1}^n(1-\psi_t),\quad S_{(k,n]}:=\sum_{j=1}^k\varphi_{j,n}=\prod_{t=k+1}^n(1-\psi_t), \quad I_k^{(n)}:=\left(S_{(k-1,n]},S_{(k,n]}\right].
\end{align*}
Conditional on $\mvpsi:=(\psi_i)_{i\ge 1}$, let 
\begin{align*}
\{ U_{k,\ell}\}_{k\in[n],\ell\in[m]} \text{ be independent and uniformly distributed on } [0,S_{(k-1,n]}].
\end{align*}
For any $1\le j<k\le n$, place an edge from $k$ to $j$ for every $U_{k,\ell}$ contained in $I_j^{(n)}$ for some $\ell\in[m]$. By~\cite[Theorem 5.10]{RemcoBook2}, the random graph constructed in such a manner has the same distribution as the sequential linear preferential attachment model $\pam(m,\delta)$.

Let $\go_{ij}^{[\ell]}=\ind\{j\xrightarrow{(\ell)} i\}$ be the indicator function that vertex $j$ connected to vertex $i$ with the $\ell$--th edge. Define the number of triangles in $\pam(m, \delta)$ by
\begin{align*}
T_n=\sum_{1\le i<j<k\le n}\sum_{\ell_1,\ell_2\neq \ell_3 \in [m]}\go_{ij}^{[\ell_1]}\go_{ik}^{[\ell_2]}\go_{jk}^{[\ell_3]}.
\end{align*}
Notice that the conditional mean of every $\go_{ij}^{[\ell]}$ is the same for every $\ell\in[m]$,
\begin{align}\label{eq:theta}
\theta_{ij}:=\E\left(\go_{ij}^{[\ell]}\,\big|\,\mvpsi\right)=\psi_i\prod_{t=i+1}^{j-1}(1-\psi_t).
\end{align}
Then, converting to the centered-edge count, we can rewrite 
\begin{align}
T_n:=m^{2}(m-1)\cdot \Delta_{n}+ m(m-1)\cdot T_{n}^{(1)}+m\cdot T_{n}^{(2)} + T_{n}^{(3)},\label{eq:centered_decomp}
\end{align}
where 
\begin{align}
\Delta_{n}&=\Delta_{n}(\mvpsi) :=\sum_{i<j<k}\theta_{ij}\theta_{ik}\theta_{jk}\label{term:central0},\\
T_{n}^{(1)}&:=\sum_{i<j}\left(\sum_{k\neq i,j}\theta_{ik}\theta_{jk}\right)\sum_{\ell=1}^m\overline{\go}_{ij}^{[\ell]},\label{term:central1}\\
T_{n}^{(2)}&:=\sum_{i<j}\theta_{ij}\sum_{k\neq i,j}\sum_{\ell_1,\ell_2=1}^m \overline{\go}_{ik}^{[\ell_1]}\overline{\go}_{jk}^{[\ell_2]},\label{term:central2}\\
\text{and } T_{n}^{(3)}&=\sum_{i<j<k}\sum_{\ell_1,\ell_2,\ell_3=1\atop \ell_2\neq\ell_3}^m\overline{\go}_{ij}^{[\ell_1]}\overline{\go}_{ik}^{[\ell_2]}\overline{\go}_{jk}^{[\ell_3]}\label{term:central3}.
\end{align}
We can interpret these quantities as a conditional mean and as weighted center-–edge counts of edges, wedges, and triangles, respectively. We also note that this decomposition is slightly different from~\eqref{eq:center_edge_intro} because we pulled out the constant factors, but this change is largely inconsequential.
Next, we identify the order of fluctuations for the four quantities. 
\begin{thm}\label{thm:variance_order}
 The dominating term in the centered-edge decomposition of $T_n$, as in~\eqref{term:central0}--\eqref{term:central3}, is of fluctuation order 
 \begin{enumeratei}
 \item $\sqrt{\log n}$ and is given by $T_{n}^{(3)}$ when $\delta>0$,
 \item $(\log n)^2$ and is given by $\Delta_{n}$ when $\delta=0$,
 \item $n^{-\delta/(2m+\delta)}\log n$ and is also given by $\Delta_{n}$ when $\delta\in (-m,0)$.
 \end{enumeratei}
\end{thm}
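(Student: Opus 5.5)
Throughout I compute second moments via the law of total variance, conditioning on $\mvpsi$. Given $\mvpsi$, the edges from different vertices $k$ are independent, and so are the $m$ edges out of a single vertex. Hence $\overline{\go}_{ij}^{[\ell]}$ and $\overline{\go}_{i'j'}^{[\ell']}$ are conditionally uncorrelated unless $(j,\ell)=(j',\ell')$. Moreover, $\E(\overline{\go}_{ij}^{[\ell]}\overline{\go}_{i'j}^{[\ell]}\mid\mvpsi)=\theta_{ij}\ind_{i=i'}-\theta_{ij}\theta_{i'j}$.

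For $T^{(1)}_n,T^{(2)}_n,T^{(3)}_n$ the conditional mean vanishes, so $\var(T_n^{(r)})=\E\,\var(T_n^{(r)}\mid\mvpsi)$. Expanding the square, only pairings of centered factors that share the same (target-vertex, edge-label) survive. The leading contributions are:
\begin{align*}
\var(T^{(3)}_n)&\approx \E\sum_{i<j<k}\theta_{ij}\theta_{ik}\theta_{jk},\\
\var(T^{(2)}_n)&\approx \E\sum_{i<j}\theta_{ij}^2\sum_k\theta_{ik}\theta_{jk},\\
\var(T^{(1)}_n)&\approx \E\sum_{i<j}\Bigl(\sum_k\theta_{ik}\theta_{jk}\Bigr)^2\theta_{ij}.
\end{align*}
In each case there are also cross terms from the $-\theta\theta$ part, which I bound by the same expressions.

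For $\Delta_n$ the variance comes only from $\mvpsi$. I would use $\var(\Delta_n)=\E\Delta_n^2-(\E\Delta_n)^2$. The useful structure is that each $\theta$ is a product of independent factors: $\E\psi_i^a\approx c_a i^{-a}$ and $\E(1-\psi_t)^a\approx 1-a(m+\delta)/((2m+\delta)t)$. Consequently $\E\prod\theta$ over a configuration of index tuples factorizes into a power of $i$ for each index, times $\prod_t$ of a moment ratio. This yields power laws $(j/i)^{\chi}$ with exponent $\chi$ depending on how many $\theta$'s cover the interval containing $t$.

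All of these sums are exactly the kind the graphical computation lemma of Section~\ref{sec:graphical_comp} evaluates. I will assume that lemma reduces each diagram to a product of power-law factors whose summation gives the claimed orders. With the exponent $\chi=(m+\delta)/(2m+\delta)$, the cases separate as follows.
\begin{itemize}[leftmargin=*]
\item \emph{$\delta>0$.} Exponents are subcritical, and every sum with three distinct indices contributes $\log n$. So $\var(T^{(3)}_n)\cong\E T_n\cong \log n$ by Theorem~\ref{thm:ET}. The other terms carry an extra factor $\theta\lesssim i^{-1}$ and are $O(1)$ or $o(\log n)$; for $\Delta_n$ one checks that the covariance between disjoint triples decays summably.
\item \emph{$\delta=0$.} $\Delta_n$ is dominated by triangles $(1,j,k)$ involving low indices, with $\E\Delta_n\cong(\log n)^3$. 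The random fluctuations of $\prod_t(1-\psi_t)^2$ relative to its mean are of order one, because $\sum_t\var(\psi_t)/\E(\psi_t)^2$ at small $t$ does not vanish. I expect the leading $\log$ powers to cancel in $\E\Delta_n^2-(\E\Delta_n)^2$ only partially, leaving $(\log n)^4$. The other terms each lose at least one $\log$ factor.
\item \emph{$\delta<0$.} Again vertex-$1$ triangles dominate, $\Delta_n\approx \sum_{j<k}\theta_{1j}\theta_{1k}\theta_{jk}$. Its mean is $n^{|\delta|/(2m+\delta)}\log n$, and it has an $L^2$-nondegenerate random limit after normalization, matching the limit in Theorem~\ref{thm:main}. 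So $\sqrt{\var\Delta_n}\cong n^{|\delta|/(2m+\delta)}\log n$. By contrast, $\var(T^{(3)}_n)\cong\E T_n$ is only the square of a square root of that order, and $T^{(1)}_n,T^{(2)}_n$ are smaller by polynomial or logarithmic factors.
\end{itemize}

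The main obstacle is the \emph{lower} bound on $\var(\Delta_n)$ for $\delta\le0$. Upper bounds follow routinely from the graphical lemma, but the variance is a difference of two quantities of the same order, so one must rule out cancellation. My plan is to condition on $\psi_1,\dots,\psi_K$ for a fixed $K$ and show that $\E(\Delta_n\mid\psi_2,\dots,\psi_K)/s_n$ converges to a nonconstant function of these variables, for instance through the explicit $i=1,2$ summands $b_i\psi_i^2\prod(1-\psi_j)^2$. Then
\[
\var(\Delta_n)\ge\var\bigl(\E(\Delta_n\mid\psi_{\le K})\bigr)\gtrsim s_n^2.
\]
For $\delta>0$ the analogous lower bound on $\var(T^{(3)}_n)$ is easy, since it is a sum of nonnegative diagonal terms.
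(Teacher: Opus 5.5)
Your treatment of $T_n^{(1)},T_n^{(2)},T_n^{(3)}$ follows the paper's route: conditional variance given $\mvpsi$, surviving pairings, then the graphical lemma. The plan for $\Delta_n$, however, has a real gap in the \emph{upper} bound on $\var(\Delta_n)$, which you call routine. The graphical computation lemma evaluates sums of $\E\prod\theta$ only up to multiplicative constants. Applied to $\E\Delta_n^2$ and $(\E\Delta_n)^2$ separately, it therefore cannot detect any cancellation between them, and for $\delta\ge0$ that cancellation is essential.

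At $\delta=0$ you have $\E\Delta_n^2\ge(\E\Delta_n)^2\cong(\log n)^6$, while the claimed variance is $(\log n)^4$. Your sentence ``I expect the leading $\log$ powers to cancel only partially'' is exactly what must be proved. At $\delta>0$ the gap breaks part (i) itself: $\E\Delta_n^2\ge(\E\Delta_n)^2\cong(\log n)^2$, yet concluding that $T_n^{(3)}$ dominates requires $\var(\Delta_n)=o(\log n)$ (the truth is $O(1)$). Saying that covariances between disjoint triples ``decay summably'' names the target but gives no mechanism. It also omits overlapping triples: a shared oldest vertex $i$ gives no decay, since $\var(X_i)\cong1$, so there one needs $\sum_i(\mu^{(n)}_{i,\cdot,\cdot})^2<\infty$, i.e.\ $\mu^{(n)}_{i,\cdot,\cdot}\lesssim i^{-1}$ from Lemma~\ref{lem:mu_ijk}. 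Only for $\delta<0$ does your upper bound work as written, because there $\E\Delta_n^2\cong(\E\Delta_n)^2\cong n^{2-4\beta}(\log n)^2$ already has the right order.

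What is missing is an expansion of $\Delta_n$ at the level of covariances. The paper writes $\Delta_n=\sum_{i<j<k}\mu_{i,j,k}X_i\prod_{t=i+1}^{k-1}Y_t^{[j]}$ and telescopes $\prod_tY_t^{[j]}=1+\sum_\ell Y^{[j]}_{i+1}\cdots Y^{[j]}_{\ell-1}(Y^{[j]}_\ell-1)$. The piece $\sum_i\mu^{(n)}_{i,\cdot,\cdot}X_i$ has variance $\sum_i\var(X_i)(\mu^{(n)}_{i,\cdot,\cdot})^2$. In the martingale piece, orthogonality of increments together with $\var(Y_\ell)\lesssim\ell^{-2}$ and $\prod_t\E Y_t^2\le C$ (Lemma~\ref{lem:bnds on Y}) gives an extra factor $1/i'$ for two triangles with oldest vertices $i<i'$. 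That factor is exactly what turns $(\log n)^2$ into $O(1)$ when $\delta>0$, and $(\log n)^6$ into $(\log n)^4$ when $\delta=0$. Equivalently, writing $\theta_\tau=\theta_{ij}\theta_{ik}\theta_{jk}$ for $\tau=(i,j,k)$, you could prove $\abs{\cov(\theta_\tau,\theta_{\tau'})}\lesssim\E\theta_\tau\,\E\theta_{\tau'}/i'$ for vertex-disjoint triples with oldest vertices $i<i'$. This uses $\E(1-\psi_t)^{a+b}=\E(1-\psi_t)^a\,\E(1-\psi_t)^b\,(1+O(t^{-2}))$ on shared gaps, plus a $1+O(1/v)$ correction at each vertex $v\ge i'$ lying inside the other triple's range; then you run the graphical sums.

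Your lower bound is sound and genuinely differs from the paper. It uses $\var(\Delta_n)\ge\var(\E(\Delta_n\mid\psi_{\le K}))$, with $\E(\Delta_n\mid\psi_{\le K})/s_n\to\sum_{i\le K}b_iX_i\prod_{t=i+1}^{K}Y_t+\mathrm{const}$. It is more robust than the paper's ``combining'' step, whose two pieces are not orthogonal (they correlate through the increments with $\ell=i$). It also makes your appeal to Theorem~\ref{thm:main} unnecessary, which is good, since that proof invokes the present theorem.

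Two minor corrections. First, surviving pairings share the \emph{source} (younger) vertex and label, not the target. Second, $\var(T_n^{(3)})$ is not a sum of nonnegative terms: triangles $(i,j,k)$ and $(i',j,k)$ with the labels of the two edges out of $k$ swapped contribute a negative product of six $\theta$'s. These are $O(1)$ for $\delta>0$ by Lemma~\ref{cor:cycles_order}, so your lower bound there survives.
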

The proof is given in the following four lemmas, which treat each term from~\eqref{term:central0}--\eqref{term:central3} separately in the reverse order.

\begin{lem}\label{lem:Tn3}
 For $m\ge 2$ and $\delta>-m$. Then
\begin{align*} 
\var(T_{n}^{(3)})\cong\begin{cases}
 \log n & \text{ if } \delta>0\\
 (\log n)^3 & \text{ if } \delta=0\\
 n^{-\delta/(2m+\delta)}\log n & \text{ if } \delta\in (-m,0)
 \end{cases}.
\end{align*}
\end{lem}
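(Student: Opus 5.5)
Because $T_n^{(3)}$ is a sum of products of centered edge indicators, the plan is to compute its variance by first conditioning on $\mvpsi$ and using the conditional independence of the attachment variables. Given $\mvpsi$, the variables $U_{k,\ell}$ are independent. Two edge indicators $\go_{ij}^{[\ell]}$ and $\go_{i'j'}^{[\ell']}$ are therefore conditionally independent unless $(j,\ell)=(j',\ell')$, i.e.\ unless they are the same edge slot. For the same slot, $\overline{\go}_{ij}^{[\ell]}\overline{\go}_{i'j}^{[\ell]}$ has conditional mean $\theta_{ij}\ind_{i=i'}-\theta_{ij}\theta_{i'j}$. In each triangle term the three centered factors use three distinct slots $(j,\ell_1),(k,\ell_2),(k,\ell_3)$; here $\ell_2\ne\ell_3$ is exactly what keeps the two edges out of $k$ distinct. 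Since each term has conditional mean zero,
\[
\E\bigl[(T_n^{(3)})^2\mid\mvpsi\bigr]=\sum \E\Bigl[\prod\overline{\go}\,\prod\overline{\go}'\;\Big|\;\mvpsi\Bigr],
\]
and a cross term survives only if the two triangles use the same set of slots. Otherwise some slot appears exactly once, and its factor has conditional mean zero. Matching slots forces the same $j,k$, the same labels, and lower endpoints $i,i'$ that agree or differ.

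Next I expand these surviving terms. The diagonal term ($i=i'$) gives
\[
\prod_{e}\theta_e(1-\theta_e)\approx \theta_{ij}\theta_{ik}\theta_{jk}.
\]
The off-diagonal terms, where vertices differ but slots coincide, have conditional means of the form $-\theta_{ij}\theta_{i'j}$ per mismatched slot, and they carry extra factors of $\theta$. I therefore expect
\[
\var(T_n^{(3)})=\E\bigl[(T_n^{(3)})^2\bigr]\cong \E\sum_{i<j<k}\theta_{ij}\theta_{ik}\theta_{jk}(1+o(1))\cong \E\Delta_n,
\]
possibly up to lower-order corrections. The off-diagonal terms can be bounded in absolute value by sums of products of the $\theta$'s over larger graphs, for example two triangles glued along the vertices $j,k$. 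Because of the extra $\theta$ factors, these should be of lower or equal order, and they are also controlled by the graphical computation lemma. The lower bound then comes from the diagonal term, since $1-\theta_e$ is bounded below except for the edges into vertex $1$; there I use $\theta_e(1-\theta_e)$ directly or note that $\theta_{1j}$ is small for large $j$.

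Finally, I identify $\E\Delta_n=\sum_{i<j<k}\E[\theta_{ij}\theta_{ik}\theta_{jk}]$, which is the conditional mean part of $\E T_n$ up to the constant $m^2(m-1)$. The order of this sum is exactly what Theorem~\ref{thm:ET} gives, and the graphical computation lemma of Section~\ref{sec:graphical_comp} reproduces it: $\log n$, $(\log n)^3$, and $n^{|\delta|/(2m+\delta)}\log n$ in the three regimes.

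The main obstacle is the bookkeeping of the off-diagonal slot-sharing terms in the negative-$\delta$ regime. There the $\theta_{1j}$ are not small relative to the sums, so the negative cross terms might cancel a leading part. I would first bound them by the graphical computation lemma, comparing a triangle with a doubled edge, and check that their exponent in $n$ does not exceed $|\delta|/(2m+\delta)$ with the matching $\log$ power. If that fails, I would keep the exact factor $1-\theta_e$ and argue positivity of the diagonal contribution separately.
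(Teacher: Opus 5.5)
Your proposal is correct and is essentially the paper's argument. The paper's one-line proof rests on $\var(T_n^{(3)})=\E\,\var(T_n^{(3)}\mid\mvpsi)$ being of the order of $\E\Delta_n=\E T_n/(m^2(m-1))$, after which Theorem~\ref{thm:ET} applies, and your slot-matching computation supplies the details the paper leaves implicit.

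Two small corrections. First, equal slot sets force only $\ell_1=\ell_1'$ and $\{\ell_2,\ell_3\}=\{\ell_2',\ell_3'\}$, so pairings with $\ell_2,\ell_3$ swapped also survive. These, like the $i\neq i'$ pairings, are of strictly lower order by Lemma~\ref{lem:graphic}. Moreover, the same-label $i\neq i'$ terms are products of two negative slot means, hence nonnegative, so no cancellation of the leading term occurs. Second, $1-\theta_e$ is not deterministically bounded below. For the lower bound it suffices to restrict to $i\ge 2$ and use $1-\theta_{ij}\ge 1-\psi_i$, which changes the relevant Beta moments only by bounded factors.
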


\begin{lem}\label{lem:quad_term}
 For $m\ge 2$ and $\delta>-m$. Then
 \begin{align*}
 \var(T_{n}^{(2)})\cong \begin{cases}
 1 & \text{ if } \delta>0\\
 \log n & \text{ if } \delta=0\\
 n^{-\delta/(2m+\delta)} & \text{ if } \delta\in (-m,0)
 \end{cases}.
 \end{align*}
\end{lem}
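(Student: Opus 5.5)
We need $\var(T_n^{(2)})$. Since $\E(T_n^{(2)}\mid\mvpsi)=0$, we have $\var(T_n^{(2)})=\E\,\E\big((T_n^{(2)})^2\mid\mvpsi\big)$. The plan is to compute the conditional second moment by expanding the square over pairs of wedges.

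\textbf{Structure of conditional correlations.} Conditionally on $\mvpsi$, the edges $\{U_{k,\ell}\}$ are independent across $(k,\ell)$. For fixed $(k,\ell)$, the indicators $\go_{ik}^{[\ell]}$, $i<k$, form a multinomial vector: the $\ell$-th edge of $k$ goes to exactly one $i$. Consequently $\E\big(\overline{\go}_{ik}^{[\ell]}\,\overline{\go}_{i'k}^{[\ell]}\mid\mvpsi\big)=\theta_{ik}\ind_{i=i'}-\theta_{ik}\theta_{i'k}$, while centered indicators attached to different $(k,\ell)$ are conditionally independent.

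Each summand of $T_n^{(2)}$ is a wedge centered at $k$, with legs $(i,k,\ell_1)$ and $(j,k,\ell_2)$. Here $k$ may sit in any position relative to $i<j$, but the legs always use the "edge slots" of the larger endpoint. A product of two wedges has nonzero conditional expectation only if its four centered factors pair up slot-by-slot. This forces the two wedges to share both edge slots. In the diagonal case the wedges coincide up to the multinomial covariance, and the contribution is bounded by
\[
\sum_{i<j}\sum_{k}\theta_{ij}^2\,\theta_{ik}\theta_{jk}
\]
(up to constants and harmless $\theta^2$ corrections). The off-diagonal multinomial terms $-\theta\theta'$ give sums of the same or smaller order. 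When $\ell_1=\ell_2$ with the same $k$ as the larger endpoint, the two legs lie in one multinomial slot and produce a product $-\theta_{ik}\theta_{jk}$ plus a correction. These terms must be tracked, but they are of the same form.

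\textbf{Taking expectation over $\mvpsi$.} The leading quantity is therefore
\[
\E\sum_{i<j,\,k}\theta_{ij}^2\theta_{ik}\theta_{jk}.
\]
By independence of the $\psi_t$, it factorizes into products of moments $\E\psi_t^a(1-\psi_t)^b$. I then evaluate it using the graphical computation lemma of Section~\ref{sec:graphical_comp}. This is a triangle-type diagram with one doubled edge, and there are three orderings of $k$ relative to $i<j$. The inputs are the moment asymptotics $\E\psi_i^a\approx i^{-a}$ and $\prod_t\E(1-\psi_t)^b\approx (j/i)^{-b(m+\delta)/(2m+\delta)}$.

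Compared with the triangle mean in Theorem~\ref{thm:ET}, which was $\log n$, $(\log n)^3$, or $n^{|\delta|/(2m+\delta)}\log n$, the doubled edge $ij$ carries an extra factor $\psi_i$. This kills the free scale sum at vertex $i$ and costs one logarithm (for $\delta\ge0$) or a factor $\log n$ (for $\delta<0$). The dominant contribution comes from $i=1$, where $\psi_1\equiv1$. This should yield the three orders $1$, $\log n$ and $n^{-\delta/(2m+\delta)}$. The upper bound is this diagram evaluation. For the lower bound, I would restrict to the diagonal terms with $i=1$ (or $i$ bounded) and $\ell_1\ne\ell_2$, since these are nonnegative, and check that the negative correction terms are of strictly smaller order or of the same order with smaller constants.

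\textbf{Main obstacle.} The hardest step is the exact bookkeeping in the diagonal expansion. Specifically, I must verify that the negative multinomial corrections and the $\ell_1=\ell_2$ cases do not cancel the leading order. I would handle this by writing the conditional variance of each slot pair exactly, observing that the correction carries an additional $\theta$ factor, and bounding it by a diagram with one extra edge. The graphical lemma should then show that this diagram is of lower order, at least in the $\delta<0$ and $\delta=0$ regimes. For $\delta>0$, I would simply exhibit the $O(1)$ contribution from $i=1,j=2$.
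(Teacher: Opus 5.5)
Your strategy is the paper's. You reduce the conditional second moment to $\E\sum_{i<j,\,k\neq i,j}\theta_{ij}^2\theta_{ik}\theta_{jk}$ and evaluate it with Lemma~\ref{lem:graphic}. The ordering $i<j<k$ gives $F_3(3-3\beta,1+\beta,2\beta)$, which dominates the other two placements of $k$.

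One step is wrong as written: the $\ell_1=\ell_2$ terms in which both legs leave the same vertex $k>j$. The $\ell$-th edge of $k$ has a single endpoint, so $\go_{ik}^{[\ell]}\go_{jk}^{[\ell]}=0$ and
\[
\overline{\go}_{ik}^{[\ell]}\,\overline{\go}_{jk}^{[\ell]}=-\theta_{ik}\theta_{jk}-\theta_{jk}\,\overline{\go}_{ik}^{[\ell]}-\theta_{ik}\,\overline{\go}_{jk}^{[\ell]}.
\]
If these terms are kept, then $\E(T_n^{(2)}\mid\mvpsi)=-m\Delta_n$, contradicting your first line. It follows that $\var(T_n^{(2)})\ge m^2\var(\Delta_n)$, which by Lemma~\ref{lem: centering term} is of order $(\log n)^4$ at $\delta=0$ and $n^{-2\delta/(2m+\delta)}(\log n)^2$ for $\delta<0$. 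So these terms are not ``of the same form'': their square produces pairs of unrelated triangles. Your pairing claim also fails for them, since two such wedges on disjoint slots have nonzero product expectation.

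The fix is that these terms do not belong in $T_n^{(2)}$. In the expansion of $T_n$, the two edges leaving the top vertex carry $\ell_2\neq\ell_3$. So $T_n^{(2)}$ must be read with $\ell_1\neq\ell_2$ whenever both legs leave the same vertex. The paper's proof uses this silently; its displayed negative-correlation bound is false for $\ell_1=\ell_2$. Under this convention $T_n^{(2)}$ is conditionally centered, every wedge occupies two distinct slots, and your slot-pairing argument is correct.

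Two further remarks. First, for the upper bound you need not chase the off-diagonal multinomial terms diagram by diagram. Distinct slot pairs are conditionally orthogonal, and a slot pair $(s_1,s_2)$ contributes a quadratic form in $\Sigma_{s_1}\otimes\Sigma_{s_2}$. For a slot $s$ of vertex $v$, with $\theta_{\cdot v}:=(\theta_{av})_{a<v}$, one has $0\preceq\Sigma_s=\mathrm{diag}(\theta_{\cdot v})-\theta_{\cdot v}\theta_{\cdot v}^{\top}\preceq\mathrm{diag}(\theta_{\cdot v})$. This gives $\E_{\mvpsi}(T_n^{(2)})^2\lesssim\sum\theta_{ij}^2\theta_{ik}\theta_{jk}$ at once, and it is the rigorous content of the paper's ``negative correlation'' step.

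Second, the paper's proof gives only the upper bound. For your lower bound, the same orthogonality is what lets you keep only some slot pairs. Simply dropping terms of the expanded square is not valid, because cross terms inside a slot pair have both signs. For two slots of one vertex $k$, the within-pair correction is $2\sum_{a<k}\theta_{ak}\bigl(\sum_{b<k,\,b\neq a}\theta_{ab}\theta_{bk}\bigr)^2$, reading $\theta_{ab}$ as $\theta_{ba}$ when $b<a$. This is at most a constant times the terms $A_2+A_3$ from the proof of Lemma~\ref{lem: lin term}. Those are $O(1)$ at $\delta=0$ and $o(n^{1-2\beta})$ for $\delta<0$, so the correction is negligible, as you expected. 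For $\delta>0$, the slot pair of vertex $3$ carrying the wedge $(1,2,3)$ alone contributes $4\E\psi_2^2(1-\psi_2)^2>0$.

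Finally, your heuristic count slips at $\delta=0$. There the doubled edge costs $(\log n)^2$, since $F_3(1,1,1)\cong(\log n)^3$ while $F_3(3/2,3/2,1)\cong\log n$. For $\delta<0$, the lost logarithm comes from the $j$-sum, not the $i$-sum. Your final orders are nonetheless correct.
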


\begin{lem}\label{lem: lin term}
 For $m\ge 2$ and $\delta>-m$. Then
 \begin{align*}
 \var(T_{n}^{(1)})\cong \begin{cases}
 1 & \text{ if } \delta>0\\
 \log n & \text{ if } \delta=0\\
 n^{-2\delta/(2m+\delta)} & \text{ if } \delta\in (-m,0)
 \end{cases}.
 \end{align*}
\end{lem}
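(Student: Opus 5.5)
We prove the claimed order of $\var(T_n^{(1)})$ by conditioning on the environment and using the law of total variance. Since $\E(\overline{\go}_{ij}^{[\ell]}\mid\mvpsi)=0$, we have $\E(T_n^{(1)}\mid\mvpsi)=0$, and therefore
\[
\var(T_n^{(1)})=\E\,\var\big(T_n^{(1)}\mid\mvpsi\big).
\]

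\emph{Step 1: the conditional variance.} Conditionally on $\mvpsi$, the uniforms $U_{k,\ell}$ are independent. Hence $\overline{\go}_{ij}^{[\ell]}$ and $\overline{\go}_{i'j'}^{[\ell']}$ are uncorrelated unless they come from the same $(j,\ell)$. For a fixed pair $(j,\ell)$, the vector $(\go_{ij}^{[\ell]})_{i<j}$ is multinomial with one trial and cell probabilities $\theta_{ij}$. Writing $a_{ij}:=\sum_{k\ne i,j}\theta_{ik}\theta_{jk}$, where $\theta_{ik}$ is understood symmetrically as $\theta_{\min,\max}$, this gives
\[
\var\big(T_n^{(1)}\mid\mvpsi\big)=m\sum_{j}\Big(\sum_{i<j}a_{ij}^2\theta_{ij}-\Big(\sum_{i<j}a_{ij}\theta_{ij}\Big)^2\Big)\le m\sum_{i<j}a_{ij}^2\theta_{ij}.
\]
For the lower bound we do not need to control the negative term globally. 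It suffices to restrict to a single dominant contribution and check that the subtracted square is of smaller order there. For instance, take $i=1$, where $\theta_{1j}$ is close to $S_{(1,j-1]}$, and compare it with the remaining mass. Alternatively, use $\sum_i a_i^2p_i-(\sum_i a_ip_i)^2\ge p_1(1-p_1)(a_1-\bar a)^2$.

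\emph{Step 2: expectations.} We must compute
\[
\E\sum_{i<j}\theta_{ij}\,a_{ij}^2=\sum_{i<j}\ \sum_{k,k'\ne i,j}\E\big[\theta_{ij}\theta_{ik}\theta_{jk}\theta_{ik'}\theta_{jk'}\big].
\]
This is a five-edge diagram: two triangles sharing the edge $ij$, which has multiplicity one, while the other edges $ik,jk,ik',jk'$ appear once each. When $k=k'$ the edges $ik,jk$ appear with multiplicity two. Each $\theta$ is a product of independent factors $\psi_i$ and $(1-\psi_t)$, so the expectation factorises over $t$ into Beta moments. These are $\E\psi_i^a\cong i^{-a}$ and $\E(1-\psi_t)^b=1-\frac{b(m+\delta)}{(2m+\delta)t}+O(t^{-2})$. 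Consequently a product over $t$ contributes $(t_2/t_1)^{-\chi b}$ with $\chi=(m+\delta)/(2m+\delta)$, up to constants. This is exactly the setting of the graphical computation lemma of Section~\ref{sec:graphical_comp}. We will split according to the relative order of $i<j$ and $k,k'$: both below $i$, between, above $j$, and so on. For each ordering we read off the exponent of each vertex variable and sum.

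\emph{Expected outcome and main obstacle.} For $\delta>0$ every vertex sum converges except possibly at the smallest vertex, so the total is $O(1)$. A lower bound of order one comes from a fixed configuration such as $i=1,j=2,k=3$. For $\delta=0$ the exponents are critical, and exactly one free vertex sum is logarithmic, giving $\log n$. For $\delta<0$ the dominant configuration is $i=1$ with $j,k,k'$ growing. In this case $\theta_{1k}\approx\prod_{t\le k}(1-\psi_t)$ and $\E(1-\psi_t)^b$ has exponent $\chi b$ with $\chi<1/2$. The squared weight $a_{1j}^2$ then produces two sums $\sum_k k^{-1+\chi'}$, where $\chi'=-\delta/(2m+\delta)$ appears after accounting for the $(j/k)$-factors. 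Each of these sums grows like $n^{-\delta/(2m+\delta)}$, which yields $n^{-2\delta/(2m+\delta)}$.

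The main obstacle is the bookkeeping of the orderings of $k,k'$ relative to $i,j$, in particular the cases $k=k'$ and $k<i$. One must confirm that none of them exceeds the claimed order; we would handle these by the graphical lemma's exponent-reading rule. The other delicate point is the matching lower bound for the conditional variance, that is, showing the subtracted square does not cancel the main term. We would handle it by the single-vertex restriction above, using that for $i=1$ we have $1-\theta_{1j}$ bounded below with positive probability.
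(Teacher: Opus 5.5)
\textbf{Verdict: the approach is right for $\delta\neq 0$, but there is a genuine error at $\delta=0$ — and in fact the statement itself is false there.}

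Your route matches the paper's. You condition on $\mvpsi$ and bound the multinomial conditional variance by $m\,\E\sum_{i<j}\theta_{ij}a_{ij}^2$. You then split $k,k'$ by position relative to $i<j$ (the paper's $A_1,A_2,A_3$, including the diagonal $k=k'$) and read off orders with Lemma~\ref{lem:graphic}. The paper only proves this upper bound, so your attention to the subtracted square goes beyond it.

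The problem is $\delta=0$. Your claim that exactly one free vertex sum is logarithmic is false. In the configuration $i<j<k<k'$ of $A_1$ the exponent vector is $(3-3\beta,2-\beta,2\beta,2\beta)$, which at $\beta=1/2$ is $(3/2,3/2,1,1)$. Both the $k$-sum and the $k'$-sum are critical, so $F_4\cong(\log n)^2$. Your own $\delta<0$ heuristic says the same thing: two sums $\sum_k k^{-2\beta}$, each of which becomes $\log n$ as $\beta\uparrow 1/2$.

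No argument can produce $\log n$ here, because the variance really is of order $(\log n)^2$. The conditional variance is a sum over $j$ of nonnegative terms. The term $j=2$ is zero, since $\theta_{12}=1$; so your suggested configuration $i=1,j=2$ gives no lower bound. The term $j=3$, however, is a two-point variance $\theta_{13}\theta_{23}(a_{13}-a_{23})^2$. Set $G_n:=\sum_{k=4}^{n}\prod_{t=4}^{k-1}(1-\psi_t)^2$. A direct computation gives $a_{13}-a_{23}=(1-2\psi_2)\bigl(\psi_3(1-\psi_3)G_n-1\bigr)$. By independence,
\[
\var\bigl(T_n^{(1)}\bigr)\ \ge\ m\,\E\bigl[\psi_2(1-\psi_2)(1-2\psi_2)^2\bigr]\cdot\E\bigl[\bigl(\psi_3(1-\psi_3)G_n-1\bigr)^2\bigr],\qquad \E G_n\cong\sum_{k\le n}k^{-2\beta}.
\]
Using $\E[(X-1)^2]\ge(\E X-1)^2$, this gives the following lower bounds:
\begin{itemize}
\item For $\delta<0$, it matches the claim: $n^{-2\delta/(2m+\delta)}$.
\item For $\delta>0$, it gives $\gtrsim 1$. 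Here use $\E[(X-1)^2]\ge \var(X)\ge\var(\psi_3(1-\psi_3))$, valid since $G_n\ge1$.
\item For $\delta=0$, it gives $(\log n)^2$.
\end{itemize}
So your lower-bound idea works for $\delta\neq 0$ once made concrete this way. At $\delta=0$ the entry of the lemma should read $(\log n)^2$.

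The paper's proof has the same slip: it records $F_4(3-3\beta,2-\beta,2\beta,2\beta)\cong\log n$ at $\beta=1/2$, which contradicts Lemma~\ref{lem:graphic}. The error is harmless for Theorem~\ref{thm:variance_order} and the main theorem. At $\delta=0$ the fluctuation order $\log n$ of $T_n^{(1)}$ is still $\ll(\log n)^2$, the fluctuation order of $\Delta_n$.
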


\begin{lem}\label{lem: centering term}
 For $m\ge 2$ and $\delta>-m$. Then
 \begin{align*}
 \var\left(\Delta_{n}\right)\cong \begin{cases}
 1 & \text{ if } \delta>0\\
 (\log n)^4 & \text{ if } \delta=0\\
 n^{-2\delta/(2m+\delta)}(\log n)^2 & \text{ if } \delta\in (-m,0)
 \end{cases}.
 \end{align*}
\end{lem}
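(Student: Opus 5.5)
The plan is to treat $\Delta_n$ as an explicit function of the independent variables $\mvpsi$ and to reduce the sum over triples to a weighted sum of products of the $\psi$'s. Multiplying the three factors in \eqref{eq:theta} gives, for $i<j<k$,
\[
\theta_{ij}\theta_{ik}\theta_{jk}=\psi_i^2\,\psi_j^2\,(1-\psi_j)\prod_{t=i+1}^{j-1}(1-\psi_t)^2\prod_{t=j+1}^{k-1}(1-\psi_t)^2 .
\]
Summing over $k$ first, $\Delta_n=\sum_{i<j}\psi_i^2\psi_j^2(1-\psi_j)\prod_{i<t<j}(1-\psi_t)^2\, R_{j,n}$, where $R_{j,n}:=\sum_{k=j+1}^n\prod_{j<t<k}(1-\psi_t)^2$.

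\textbf{Heuristic orders.} Since $\E\psi_t\approx \chi/t$ with $\chi:=(m+\delta)/(2m+\delta)$, and the normalized products $\prod_{t}(1-\psi_t)^2/\norm{1-\psi_t}_2^2$ form positive martingales, the typical size of $\prod_{a<t<b}(1-\psi_t)^2$ is $(a/b)^{2\chi}$. Hence $R_{j,n}$ behaves like
\begin{itemize}
\item $j$ when $2\chi>1$ ($\delta>0$),
\item $j\log(n/j)$ when $\delta=0$,
\item $n^{1-2\chi}j^{2\chi}$ when $\delta<0$.
\end{itemize}
Plugging this in and using $\psi_i^2\approx i^{-2}$ recovers the orders of $\E\Delta_n$ (compare Theorem~\ref{thm:ET}). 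In particular, for $\delta\le 0$ the sum is dominated by $i$ bounded, and for $\delta<0$ by $i=1$. The second moments will be computed with the graphical computation lemma of Section~\ref{sec:graphical_comp}. There, $\E\Delta_n^2$ is a sum over pairs of triangles, and every factor $\E[(1-\psi_t)^4]$ or $\E[\psi_t^4]$ is read off as a weight $t^{-4\chi}$-type or $t^{-4}$-type in the diagram. This gives the upper bounds $\var(\Delta_n)\le \E\Delta_n^2\lesssim (\E\Delta_n)^2$ in the cases $\delta\le0$, i.e.\ $(\log n)^4$ and $n^{-2\delta/(2m+\delta)}(\log n)^2$.

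\textbf{Case $\delta>0$.} Here I would use the Efron--Stein inequality, $\var(\Delta_n)\le\sum_t \E\big(\Delta_n-\Delta_n^{(t)}\big)^2$, where $\Delta_n^{(t)}$ resamples $\psi_t$. The dependence of $\Delta_n$ on $\psi_t$ splits into two parts:
\begin{itemize}
\item terms with $t\in\{i,j\}$, whose change is of size $\psi_t$ times sums that are $O(1)$ in $L^2$;
\item terms with $t$ strictly inside a product, whose change is of size $\psi_t\cdot\sum_{i<t<j}(\cdots)$.
\end{itemize}
With the decay above, both parts give summable contributions of order $t^{-2}$ times bounded quantities, so $\var(\Delta_n)\lesssim 1$. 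For the matching lower bound, I would use $\var(\Delta_n)\ge\var(\E[\Delta_n\mid\psi_2])$. By monotone convergence, $\E[\Delta_n\mid\psi_2]$ converges to a nonconstant function of $\psi_2$, since the $i=2$ terms carry the factor $\psi_2^2$ while the $i=1$ terms carry $(1-\psi_2)^2$. A polynomial of this kind is nonconstant, so its variance is bounded below uniformly in $n$.

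\textbf{Cases $\delta\le0$.} For the lower bound I would show that $\Delta_n/s_n$, suitably normalized, converges in $L^2$ to a nondegenerate limit. For $\delta<0$, write the main $i$-terms as $\psi_i^2\prod_{t>i}(1-\psi_t)^2$ times deterministic weights, up to errors $o(s_n)$ in $L^2$. The normalized martingales $\prod_{t\le n}(1-\psi_t)^2/\norm{1-\psi_t}_2^2$ are bounded in $L^2$, because $\sum_t\var(\psi_t)/\E(1-\psi_t)^2<\infty$, and converge to nonconstant limits. This yields exactly the series in Theorem~\ref{thm:main}\ref{thm:main_delta_neg}. That limit is not a.s.\ constant (for example, its conditional variance given $(\psi_j)_{j\ge3}$ is positive), so $\var(\Delta_n)\gtrsim s_n^2$. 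For $\delta=0$ the argument is the same, except that the extra $\log(n/j)$ factor from $R_{j,n}$ combines with the $\sum_j j^{-1}$ sum to produce $\tfrac14(\log n)^2$. The delicate point is that the fluctuating factor must still be controlled uniformly over the range of $j$.

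\textbf{Main obstacle.} The hardest step is the $L^2$ control of the error $\Delta_n-s_n\sum_i b_i(\cdots)$ in the cases $\delta\le0$. This means replacing $R_{j,n}$ and the inner products by their martingale limits with explicit error rates, uniformly in $j$. I would first try the graphical computation lemma directly on $\E(\Delta_n-\text{approx})^2$, written as a difference of diagram sums, hoping that the leading diagrams cancel exactly and that the remainder is smaller by a factor $(\log n)^{-1}$.
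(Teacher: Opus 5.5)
Your Efron--Stein upper bound for $\delta>0$ is a sound alternative to the paper's argument. However, the proposal has a genuine gap at $\delta=0$. There you bound $\var(\Delta_n)\le\E\Delta_n^2\lesssim(\E\Delta_n)^2$ and read the right-hand side as $(\log n)^4$. But your own heuristic, and Theorem~\ref{thm:ET}, give $\E\Delta_n\cong(\log n)^3$, so this bound only yields $(\log n)^6$. Your remark that the sum is dominated by bounded $i$ holds for the fluctuations but not for the mean: at $\delta=0$ every $i\le\sqrt n$ contributes $\mu_{i,\cdot,\cdot}^{(n)}\cong(\log n)^2/i$, which is where the third logarithm comes from. (For $\delta<0$ the second-moment bound is adequate, since there $\E\Delta_n^2\cong(\E\Delta_n)^2\cong s_n^2$.)

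So at $\delta=0$ the variable $\Delta_n$ concentrates, and the true order appears only after the leading $(\log n)^6$ parts of $\E\Delta_n^2$ and $(\E\Delta_n)^2$ cancel. The graphical computation lemma evaluates sums only up to multiplicative constants, so it can never certify such a cancellation. For the same reason, your plan for the ``main obstacle'' cannot work: applying the lemma to $\E(\Delta_n-\text{approx})^2$ and hoping the leading diagrams cancel gives no information. This also leaves the $L^2$-limit route you rely on for the $\delta\le0$ bounds without a proof.

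What is missing is an exact orthogonal decomposition made before any estimation. The paper normalizes $X_i=\psi_i^2/\E\psi_i^2$, $Y_t=(1-\psi_t)^2/\E(1-\psi_t)^2$ and $U_t$ to have mean one. It writes $\Delta_n=\sum_{i<j<k}\mu_{i,j,k}X_i\prod_{t=i+1}^{k-1}Y^{[j]}_t$ and telescopes $\prod_t Y^{[j]}_t-1=\sum_\ell Y^{[j]}_{i+1}\cdots Y^{[j]}_{\ell-1}(Y^{[j]}_\ell-1)$. Then $\Delta_n-\E\Delta_n$ equals $\sum_i\mu_{i,\cdot,\cdot}^{(n)}(X_i-1)$ plus martingale-difference terms. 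Their second moments are sums of nonnegative products of $\var(X_i)$, $\var(Y_\ell)\lesssim\ell^{-2}$ and $\prod_t\E Y_t^2\le C$, and only these are estimated up to constants.

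Alternatively, the Efron--Stein inequality you already use is equally cancellation-free and should work for every $\delta$. At $\delta=0$, resampling $\psi_t$ in the terms with $i=t$ costs about $\E(\psi_t^2-\psi_t'^2)^2\,(\mu_{t,\cdot,\cdot}^{(n)}/\E\psi_t^2)^2\approx t^{-2}(\log(n/t))^4$, which sums to $(\log n)^4$. The terms with $j=t$ or with $t$ interior give at most the same order, modulo second-moment bounds for the coefficient sums with one vertex pinned.

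There are two smaller points. First, the displayed product should contain $\psi_j(1-\psi_j)$, not $\psi_j^2(1-\psi_j)$, since only $\theta_{jk}$ carries $\psi_j$; with $\psi_j^2$ none of your orders would come out. Second, in the $\delta>0$ lower bound, nonconstancy of $\E[\Delta_n\mid\psi_2]$ is not automatic. The triangles with $i=2$ and those with $i=1<2<j$ carry exactly the same random coefficient $A:=\sum_{2<j<k\le n}\prod_{2<s<k,\,s\ne j}(1-\psi_s)^2\,\psi_j(1-\psi_j)$, multiplied by $\psi_2^2$ and $(1-\psi_2)^2$ respectively. Hence
\[
\E[\Delta_n\mid\psi_2]=\mathrm{const}+(b_n-2a_n)\,\psi_2(1-\psi_2),\qquad a_n:=\E A,\quad b_n:=\sum_{k=3}^{n}\prod_{s=3}^{k-1}\E(1-\psi_s)^2,
\]
where $b_n$ comes from the triangles $\{1,2,k\}$. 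Since $a\psi^2+2a\psi(1-\psi)+a(1-\psi)^2\equiv a$, you must actually verify $\lim_n(2a_n-b_n)\ne0$.

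On the positive side, the same identity settles the lower bound for $\delta\le0$ directly, with no $L^2$ limit. There $a_n\cong s_n$ while $b_n\cong f_n=o(s_n)$, so $\var(\Delta_n)\ge(2a_n-b_n)^2\var(\psi_2(1-\psi_2))\gtrsim s_n^2$.
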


\subsection{Auxiliary quantities}
Since our analysis becomes quite technical, we introduce several variables to simplify expressions and isolate statements that define their properties. Some of the computations below are similar to those from~\cite{SubgraphsPAM} and some of the references therein. We do not skip them for uniformity in presentation for the convenience of the reader.

First, it turns out to be more convenient to work with triangles indexed by their oldest vertex, hence we introduce the following quantities and write the sum from the conditional mean in~\eqref{term:central0} as
$
\Delta_n:=\sum_{i=1}^{n-2} \Delta_{n,i}
$ 
where 
\begin{align}\label{eq:Triangle_i}
\Delta_{n,i}:=\sum_{i<j<k\le n} \theta_{ij}\theta_{ik}\theta_{jk}
= \psi_{i}^{2}\cdot \sum_{i<j<k\le n} \prod_{t=i+1, t\neq j}^{k-1}(1-\psi_{t})^{2}\cdot \psi_{j} (1-\psi_{j}),\quad i\ge 1. 
\end{align}
Observe that $m^2(m-1)\cdot \Delta_{n, i}$ is the conditional expectation given $\mvpsi$ of the number of triangles with $i$ as the lowest labeled vertex. 
For $i,j,k\in[n]$ such that $i<j<k\le n$ define 
 \begin{align}\label{eq:mu_ijk}
 \mu_{i,j,k}&:=\E\theta_{ij}\theta_{ik}\theta_{jk}.
 \end{align}
We have
\begin{align}\label{eq:mu_idotdot}
\mu_{i,\cdot,\cdot}^{(n)}:=\E \Delta_{n,i}
&= \sum_{i<j<k\le n} \E\psi_{i}^{2}\cdot \prod_{t=i+1}^{k-1} \E(1-\psi_{t})^2 \cdot \frac{\E(\psi_j(1-\psi_j))}{\E(1-\psi_j)^2}\notag\\
&= \sum_{i<j<k\le n} \frac{\E\psi_{i}^2}{\prod_{t=2}^{i} \E(1-\psi_{t})^2}\cdot \prod_{t=2}^{k-1} \E(1-\psi_{t})^2 \cdot \frac{\E(\psi_j(1-\psi_j))}{\E(1-\psi_j)^2}.
\end{align}
Next, we identify the exact behavior of the $\mu_{i,\cdot,\cdot}^{(n)}$ terms upto a $(1+o(1))$-factor.  Since $\psi_t\sim\mathrm{Beta}(m+\delta,(2m+\delta)(t-1)-m)$, standard beta distribution arguments imply that
$
 (2m+\delta)\cdot t \psi_t 
$
converges in distribution to 
$\mathrm{Gamma}(m+\delta,1)$ \text{ as } $t\to\infty$.
 In particular, $t \psi_t$ is of constant order in all moments. The following lemma presents exact moment computations for $\psi_t,1-\psi_t$, which we will need later.

\begin{lem}\label{lem:psimom}
For $m\ge 2$ and $\delta>-m$, let $\beta:=(m+\delta)/(2m+\delta)$. Then for all $t\ge 1, k\ge 1$ we have
 \begin{align*}
 \E\psi_t^k &=\frac{\gC(m+\delta+k)}{\gC(m+\delta)}\cdot \frac{\gC((2m+\delta)t-2m)}{\gC((2m+\delta)t-2m+k)},\\
 \E(1-\psi_t)^k &= \frac{\gC((2m+\delta)(t-1)-m +k)}{\gC((2m+\delta)(t-1)-m)}\cdot \frac{\gC((2m+\delta)t-2m)}{\gC((2m+\delta)t-2m+k)}.
\end{align*}
In particular, we have $\norm{\theta_{ij}}_2 \cong \norm{\theta_{ij}}_1 \cong i^{\beta-1} j^{-\beta}.$
\end{lem}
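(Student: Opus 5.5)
The moment formulas come straight from the Beta moment identity. For $X\sim\mathrm{Beta}(a,b)$ we have
\[
\E X^k=\frac{\gC(a+k)\gC(a+b)}{\gC(a)\gC(a+b+k)},
\]
and $1-X\sim\mathrm{Beta}(b,a)$. So the plan is simply to read off the parameters and then estimate products of Gamma ratios.

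\textbf{Step 1: the two moment formulas.} For $\psi_t$ we take $a=m+\delta$ and $b=(2m+\delta)(t-1)-m$. Then
\[
a+b=(2m+\delta)(t-1)+\delta=(2m+\delta)t-2m,
\]
and both displayed formulas follow immediately. The case $t=1$ ($\psi_1\equiv1$) is consistent with the convention, or can be checked or excluded separately.

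\textbf{Step 2: moments of single factors.} By $\gC(x+k)/\gC(x)=x^k(1+O(1/x))$, we get
\[
\E\psi_t^k\cong t^{-k}\ \text{ for } t\ge2, \qquad \E(1-\psi_t)^k=1-\frac{k(m+\delta)}{(2m+\delta)t}+O(t^{-2}).
\]
The second expansion comes from the ratio $\prod_{r=0}^{k-1}\frac{x-(m+\delta)+r}{x+r}$ with $x=(2m+\delta)t-2m$.

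\textbf{Step 3: the products and $\theta_{ij}$.} By independence,
\[
\E\theta_{ij}^k=\E\psi_i^k\prod_{t=i+1}^{j-1}\E(1-\psi_t)^k .
\]
The product telescopes exactly as a ratio of Gamma functions. Alternatively, sum the logarithms:
\[
\sum_{t=i+1}^{j-1}\log\Bigl(1-\frac{k\beta}{t}+O(t^{-2})\Bigr)=-k\beta\log(j/i)+O(1/i),
\]
with the error uniformly bounded. Hence the product is $\cong (i/j)^{k\beta}$, which gives
\[
\E\theta_{ij}^k\cong i^{-k}(i/j)^{k\beta}=i^{k(\beta-1)}j^{-k\beta}.
\]
For $i=1$ the factor $\psi_1\equiv1$ gives the same order. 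Taking $k=1$ yields $\norm{\theta_{ij}}_1$. Taking $k=2$ and a square root yields $\norm{\theta_{ij}}_2\cong i^{\beta-1}j^{-\beta}$, since $\theta_{ij}\ge0$.

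The only mildly delicate point is keeping the constants in $\cong$ uniform in $i<j$, including small $i$ and the boundary case $j=i+1$ (empty product). I would handle this by bounding the $O(t^{-2})$ terms summably, so the implied constants depend only on $m,\delta,k$. One also needs $(2m+\delta)(t-1)-m>0$ for $t\ge2$, which holds since $\delta>-m$.
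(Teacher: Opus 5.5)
Your proof is correct and follows essentially the same route as the paper: read off the Beta moments, then control $\prod_{t=i+1}^{j-1}\E(1-\psi_t)^k$ by summing $\log\E(1-\psi_t)^k=-k\beta/t+O(t^{-2})$ to get $(i/j)^{k\beta}$. The paper does this only for $k=2$, via $\Delta f_j/\Delta f_{i+1}$, and obtains the sharper $(1+O(1/i))$ form.

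One small correction: the formulas are not consistent with $\psi_1\equiv 1$ at $t=1$ (for $k=1$ the first gives $(m+\delta)/\delta$), so $t=1$ must simply be excluded, as the paper implicitly does.
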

Before we present the proof, let us first discuss some of the consequences. By Lemma~\ref{lem:psimom}, we have for all $k\ge 1, t\ge 2$,
\begin{align*}
    \prod_{i=2}^{t-1}\E(1-\psi_i)^k
    &=\prod_{s=0}^{k-1}\frac{\gC(1+(s+\delta)/(2m+\delta))}{ \gC(1+(s-m)/(2m+\delta))}\cdot \frac{ \gC(t-1+(s-m)/(2m+\delta))}{ \gC(t-1+(s+\delta)/(2m+\delta))}.
\end{align*}

We observe that similar computations appeared in~\cite{SubgraphsPAM} for the case $k=2$.
Next, we will explicitly identify the sequence $b_{i}$ from~\eqref{def:bi}.
Define $\beta:=(m+\delta)/(2m+\delta)$, so that $1/(2m+\delta)=(1-\beta)/m$. Define the sequence
\begin{align}\label{def:fn}
f_n:=f_n(\beta):=1+
\sum_{t=3}^{n} \prod_{i=2}^{t-1} \E(1-\psi_i)^2 
\end{align}
for $n\ge 3$ and $f_1=0,f_2=1$. In particular, we have 
\begin{align*}
 \Delta f_t := f_t - f_{t-1}
 &=\prod_{i=2}^{t-1} \E(1-\psi_i)^2 \\
 &= \frac{\gC(2\beta)\gC(2\beta+(1-\beta)/m)}{\gC(\beta)\gC(\beta+(1-\beta)/m)}\cdot \frac{\gC(t-2+\beta)\gC(t-2+\beta+(1-\beta)/m)}{\gC(t-2+2\beta)\gC(t-2+2\beta+(1-\beta)/m)}\cong t^{-2\beta}.
\end{align*}
It is easy to check that  
\begin{align*}
\mu_{i,j,k}&=
\frac{\E \psi_i^2}{\Delta f_{i+1}}\cdot \Delta f_k\cdot \frac{\E \psi_j(1-\psi_j)}{\E(1-\psi_j)^2}
\text{ and } 
b_{i}=\frac{\E \psi_i^2}{\Delta f_{i+1}} \cong i^{2\beta-2}
\end{align*}
for $1\le i<j<k\le n$. Moreover, we have 
\begin{align*}
 \frac{\Delta f_{i+1}}{\E \psi_i^2}\cdot\mu_{i,\cdot,\cdot}^{(n)}
 &= \sum_{j=i+1}^{n-1} \frac{\E \psi_j(1-\psi_j)}{\E(1-\psi_j)^2} (f_n-f_j).
\end{align*}
Define
\begin{align}
\tilde{s}_n&:= f_n\sum_{j=2}^{n-1} \frac{\E \psi_j(1-\psi_j)}{\E(1-\psi_j)^2}
- \sum_{j=2}^{n-1} \frac{\E \psi_j(1-\psi_j)}{\E(1-\psi_j)^2}\, f_j \notag\\
&= \bigl(1 +O(1/\log n)\bigr)\cdot
\begin{cases}
 \dfrac{\beta}{1-2\beta}n^{1-2\beta}\log n, & \text{if } 0<\beta<1/2,\\[2mm]
 \dfrac14(\log n)^2, & \text{if } \beta=1/2.
\end{cases}\label{eq:s_n}
\end{align}
Thus, we have $\abs{\tilde{s}_n/s_{n}-1}\lesssim 1/\log n$. Next, we present the proof of Lemma~\ref{lem:psimom}.

\begin{proof}[Proof of Lemma~\ref{lem:psimom}]
 Since for all $i>2$ the random variable $\psi_i$ has distribution $\mathrm{Beta}(m+\delta,m(2i-3)+\delta(i-1))$, the first two equalities follow from a direct computations as in~\cite[Eq. (5.7) and (5.9)]{SubgraphsPAM}.
 To compute $\norm{\theta_{ij}}_2$ first observe that
 \begin{align*}
 L_{ij}:=\log \frac{\Delta f_j}{\Delta f_{i+1}} &=\sum_{t=i+1}^{j-1}\log\left(\frac{\left(t-\frac{3m+\delta}{2m+\delta}\right)\left(t-\frac{3m+\delta-1}{2m+\delta}\right)}{\left(t-\frac{2m}{2m+\delta}\right)\left(t-\frac{2m-1}{2m+\delta}\right)}\right)\\
 &=-\sum_{t=i+1}^{j-1}\frac{2(m+\delta)}{(2m+\delta)t}+O\left(\sum_{t=i+1}^{j-1}t^{-2}\right)
 =-2\beta\sum_{t=i+1}^{j-1}t^{-1}+O\left(1/i\right).
 \end{align*}
 Therefore,
 \begin{align*}
 \norm{\theta_{ij}}^2_2=\E \psi_i^2\cdot\prod_{t=i+1}^{j-1}(1-\psi_t)^2
 =\E \psi_i^2\cdot \frac{\Delta f_j}{\Delta f_{i+1}} 
 &=\frac{(m+\delta)(m+\delta+1)}{(2m+\delta)^2}\cdot\frac{1}{i^2}\left(\frac{i}{j}\right)^{2\beta}(1+O(1/i)).
 \end{align*}
 This completes the proof.
 \end{proof}
The following result gives the asymptotic behavior of $\mu_{i,\cdot,\cdot}^{(n)}$ up to a $(1+o(1))$-factor.
\begin{lem}\label{lem:mu_ijk}
For $m\ge 2$ and $\delta\in(-m,0]$, let $\beta:=(m+\delta)/(2m+\delta)$. Then for each fixed $i\ge 1$,
\begin{align*}
\abs{\mu_{i,\cdot,\cdot}^{(n)}/\tilde{s}_n - b_i}\le C\cdot \frac{b_{i}\log i}{\log n},
\end{align*}
where $b_{i}$ is as in~\eqref{def:bi} and $C$ is an absolute constant. Moreover, we have
\[
\mu_{i,\cdot,\cdot}^{(n)} \lesssim \begin{cases}
    i^{-1} &\text{ if } \delta >0,\\
    \tilde{s}_n \cdot i^{2\beta-2} & \text{ if } -m<\delta \le 0.
\end{cases}
\]
\end{lem}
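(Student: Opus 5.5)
We start from the exact identity displayed just before the definition of $\tilde s_n$:
\[
\mu_{i,\cdot,\cdot}^{(n)}=b_i\sum_{j=i+1}^{n-1} a_j\,(f_n-f_j),\qquad a_j:=\frac{\E \psi_j(1-\psi_j)}{\E(1-\psi_j)^2},
\]
where $b_i=\E\psi_i^2/\Delta f_{i+1}\cong i^{2\beta-2}$. By definition, $\tilde s_n=\sum_{j=2}^{n-1}a_j(f_n-f_j)$. Since $f$ is nondecreasing and $a_j\ge0$, every summand is nonnegative. This immediately gives the two-sided structure
\[
0\le \tilde s_n-\frac{\mu_{i,\cdot,\cdot}^{(n)}}{b_i}=\sum_{j=2}^{i} a_j\,(f_n-f_j)\le f_n\sum_{j=2}^{i}a_j .
\]
The whole lemma then reduces to estimating this "missing head" of the sum relative to $\tilde s_n$.

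For the first claim we use Lemma~\ref{lem:psimom}. It gives $a_j=\E\psi_j/\E(1-\psi_j)^2\cdot(1+O(1/j))$, hence $a_j\cong 1/j$, and in fact $a_j=\frac{\beta}{j}(1+O(1/j))$ since $\E\psi_j\sim (m+\delta)/((2m+\delta)j)$. Therefore $\sum_{j\le i}a_j\lesssim \log i+1$. Next we bound $f_n$ using $\Delta f_t\cong t^{-2\beta}$. For $\beta<1/2$ this gives $f_n\cong n^{1-2\beta}$, and for $\beta=1/2$ it gives $f_n\cong\log n$. Comparing with \eqref{eq:s_n}, we have $\tilde s_n\cong n^{1-2\beta}\log n$ and $\tilde s_n\cong(\log n)^2$ respectively. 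In both cases $f_n/\tilde s_n\lesssim 1/\log n$. Dividing the displayed inequality by $\tilde s_n$ yields
\[
\abs{\mu_{i,\cdot,\cdot}^{(n)}/\tilde s_n-b_i}\le C\,\frac{b_i(1+\log i)}{\log n}.
\]
For $i=1$ the head sum is empty, so the error is $0$. For $i\ge2$ we absorb the $1$ into $\log i$, which yields the stated $\log i$ form. The constant depends only on $m,\delta$ through $a_j\lesssim 1/j$ and $f_n/\tilde s_n$; the statement's "absolute" is understood in that sense.

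The second claim for $-m<\delta\le 0$ is immediate from the same inequality: $\mu_{i,\cdot,\cdot}^{(n)}\le b_i\tilde s_n\cong \tilde s_n i^{2\beta-2}$. For $\delta>0$ we have $\beta>1/2$, so $\Delta f_t\cong t^{-2\beta}$ is summable. We then bound
\[
f_n-f_j=\sum_{t>j}^{n}\Delta f_t\lesssim j^{1-2\beta},
\]
so that
\[
\mu_{i,\cdot,\cdot}^{(n)}\lesssim i^{2\beta-2}\sum_{j>i}j^{-1}j^{1-2\beta}\cong i^{2\beta-2}\,i^{-2\beta+1}=i^{-1}.
\]

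The only step requiring care is the uniform asymptotics $a_j\cong 1/j$ and $\Delta f_t\cong t^{-2\beta}$ with constants independent of $j,t$, including small indices and the degenerate $\psi_1\equiv1$ (where $a_1$ would be problematic, but $j\ge2$ always). These follow from the Gamma-ratio formulas in Lemma~\ref{lem:psimom} via standard Stirling bounds, exactly as in the proof of that lemma. The sharper $(1+O(1/\log n))$ asymptotics for $\tilde s_n$ are already given by \eqref{eq:s_n}, so we only need its order.
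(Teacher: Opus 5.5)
Your proposal is correct and follows essentially the same route as the paper. Both start from the identity $\mu_{i,\cdot,\cdot}^{(n)}=b_i\sum_{j=i+1}^{n-1}a_j(f_n-f_j)$, bound the discrepancy with $\tilde s_n$ by the head sum $\sum_{j=2}^{i}a_j(f_n-f_j)\le f_n\sum_{j\le i}a_j\lesssim f_n\log i$ together with $f_n/\tilde s_n\lesssim 1/\log n$, and, for $\delta>0$, use $f_n-f_j\lesssim j^{1-2\beta}$. Your monotonicity observation giving $\mu_{i,\cdot,\cdot}^{(n)}\le b_i\tilde s_n$ directly is a slightly cleaner version of the paper's bound via $f_n-f_j\le f_n$.
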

\begin{proof}[Proof of Lemma~\ref{lem:mu_ijk}]
Recall that for $1\le i<j<k\le n$,
\begin{align*}
\mu_{i,j,k}
=\E\psi_i^2\cdot \prod_{t=i+1}^{k-1} \E(1-\psi_t)^2\cdot \frac{\E(\psi_j(1-\psi_j))}{\E(1-\psi_j)^2}
= \frac{\E \psi_i^2}{\Delta f_{i+1}}\cdot \Delta f_k\cdot \frac{\E \psi_j(1-\psi_j)}{\E(1-\psi_j)^2}.
\end{align*}
Hence, 
\begin{align*}
\frac{\Delta f_{i+1}}{\E \psi_i^2}\,\mu_{i,\cdot,\cdot}^{(n)}
=\sum_{j=i+1}^{n-1} \frac{\E \psi_j(1-\psi_j)}{\E(1-\psi_j)^2}(f_n-f_j).
\end{align*}
Using
\begin{align*}
\frac{\E \psi_j(1-\psi_j)}{\E(1-\psi_j)^2}=\frac{\beta}{j}(1+o(1)),
\end{align*}
along with the asymptotics for $f_n$ and comparing with the definition of $\tilde{s}_n$, we obtain
\begin{align*}
\left|\frac{\Delta f_{i+1}}{\E \psi_i^2}\,\mu_{i,\cdot,\cdot}^{(n)}-\tilde{s}_n\right|
\le C\,f_n\log i.
\end{align*}
Dividing by $\tilde{s}_n$ and using $b_{i}=\E\psi_i^2/\Delta f_{i+1}$ completes the proof. 

For the upper bound on $\mu_{i,\cdot,\cdot}^{(n)}$ when $\delta \in (-m,0]$, so that $\beta\in (0,1/2]$, we use $f_n-f_j\le f_n$,
\begin{align*}
    C_1&:=\sup_{i\ge 2} i \E \psi_i/\E(1-\psi_i)^2 <\infty,\\
    C_2&:=\sup_{i\ge 2}i^2\E \psi_i^2<\infty,\\
    \text{and } C_3&:=\inf_{i\ge 2}i^{2\beta}\Delta f_i \in(0,\infty)
\end{align*}
to get
\[
\mu_{i,\cdot,\cdot}^{(n)} \le C_1 f_n\log n\cdot \frac{\E\psi_i^2}{\Delta f_{i+1}} \le \frac{C_1 C_2}{C_3} \cdot f_n\log n\cdot i^{2\beta-2}.
\]
When $\delta >0$ or $\beta\in (1/2,1)$, we use the fact that $f_n-f_j \le C_4 \cdot j^{1-2\beta}$ to get 
$
\mu_{i,\cdot,\cdot}^{(n)} \le C_1C_4\sum_{j>i}j^{-2\beta}\cdot \E\psi_i^2/\Delta f_{i+1} \le C_1 C_2C_4/C_3 \cdot i^{-1}.
$
\end{proof}

It will be helpful to consider normalized versions (mean one) of several random variables, which we introduce now 
\begin{align}\label{eq:XYU}
 X_t&:=\frac{\psi_t^2}{\E\psi_t^2},\quad 
 Y_t=\frac{(1-\psi_t)^2}{\E(1-\psi_t)^2},\quad 
 U_t=\frac{\psi_t(1-\psi_t)}{\E\psi_t(1-\psi_t)}, \quad \text{ and } \quad
 Y_t^{[j]}=\begin{cases}Y_t&\text{ if } j\neq t\\
 U_t &\text{ if } j= t\end{cases}.
\end{align}

Most computations will be carried out for various products and sums of the expectations of these quantities. We dedicate the rest of the subsection to supporting lemmas.

\begin{lem}\label{lem:bnds on Y}
 For the random variables given in~\eqref{eq:XYU}, some universal constant $C>0$, and all $1<t<t'$, we have 
 \begin{align*}
 \var(X_t)\le C,\quad\var(Y_t)\le \frac{C}{t^2},\quad\var(U_t)\le C, \quad \cov(Y_t,U_t)\le \frac{C}{t}, \quad \text{and}\quad  \prod_{i=t}^{t'}\E Y_i^2 \le C.
 \end{align*}
\end{lem}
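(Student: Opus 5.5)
All five bounds follow from the exact moment formulas in Lemma~\ref{lem:psimom}. Write $a:=m+\delta>0$ and $b_t:=(2m+\delta)(t-1)-m$, so that $\psi_t\sim\mathrm{Beta}(a,b_t)$, and set $N_t:=a+b_t=(2m+\delta)t-2m$. Then $N_t\asymp t$ uniformly for $t\ge 2$, because $N_2=m+\delta>0$ and $N_t$ grows linearly in $t$.

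Each normalized variable is a ratio of Beta moments with explicit rational forms:
\[
\E X_t^2=\frac{\E\psi_t^4}{(\E\psi_t^2)^2}=\frac{(a+2)(a+3)}{a(a+1)}\cdot\frac{N_t(N_t+1)}{(N_t+2)(N_t+3)} .
\]
This is bounded uniformly in $t$, hence $\var(X_t)\le C$.

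For $U_t$, expand $\psi_t^2(1-\psi_t)^2$ and use the Beta moment formula $\E\psi^p(1-\psi)^q=\frac{(a)_p(b)_q}{(N)_{p+q}}$, where $(x)_k$ is the rising factorial. This gives
\[
\E U_t^2=\frac{(a)_2(b_t)_2/(N_t)_4}{\bigl(ab_t/(N_t)_2\bigr)^2}.
\]
Since $b_t\asymp N_t\asymp t$, this ratio is bounded, so $\var(U_t)\le C$.

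For $Y_t$ the fluctuation is small because $1-\psi_t$ is close to $1$. I would write $\var(Y_t)=\E(1-\psi_t)^4/(\E(1-\psi_t)^2)^2-1$. With $(b)_k/(N)_k$ this equals
\[
\frac{(b_t+2)(b_t+3)\,N_t(N_t+1)}{b_t(b_t+1)(N_t+2)(N_t+3)}-1 .
\]
Put $N=b+a$ and expand in $1/N$. The zeroth-order term is $1$. The first-order terms cancel: the $b$-part contributes $(2+3+\ldots)$ and the $N$-part contributes $-(2+3)$ at order $1/b$ versus $1/N$, and their difference is $O(a/N^2)$. What remains is $O(t^{-2})$. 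An alternative, cleaner route is to note $(1-\psi_t)^2-\E(1-\psi_t)^2=-2(\psi_t-\E\psi_t)+(\psi_t^2-\E\psi_t^2)$, so that
\[
\var\bigl((1-\psi_t)^2\bigr)\lesssim \var(\psi_t)+\E\psi_t^4\lesssim t^{-2},
\]
while $\E(1-\psi_t)^2\ge c>0$. This uses $\var(\psi_t)\asymp a b_t/N_t^3\asymp t^{-2}$ and $\E\psi_t^k\asymp t^{-k}$.

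The covariance is handled the same way. I write $\cov(Y_t,U_t)=\cov\bigl((1-\psi_t)^2,\psi_t(1-\psi_t)\bigr)/\bigl(\E(1-\psi_t)^2\,\E\psi_t(1-\psi_t)\bigr)$. By Cauchy--Schwarz the numerator is at most $\sqrt{\var((1-\psi_t)^2)\var(\psi_t(1-\psi_t))}\lesssim t^{-1}\cdot t^{-1}$, using $\var(\psi_t(1-\psi_t))\le\E\psi_t^2\lesssim t^{-2}$. The denominator is $\gtrsim 1\cdot t^{-1}$. Together these give $\cov(Y_t,U_t)\le C/t$; only the upper bound is needed.

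For the product, $\E Y_i^2=1+\var(Y_i)\le 1+C/i^2$. Hence
\[
\prod_{i=t}^{t'}\E Y_i^2\le\exp\Bigl(C\sum_{i\ge 2}i^{-2}\Bigr),
\]
which is uniform in $t,t'$.

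The only delicate point is $\var(Y_t)\lesssim t^{-2}$, since a crude bound would give only $t^{-1}$; the centering decomposition above resolves it. Throughout, I need uniform constants down to $t=2$, where $b_2=m+\delta$ is small but positive. This is fine, because each expression is finite at every $t\ge2$ and asymptotically bounded, so a single constant $C$ works for all $t$.
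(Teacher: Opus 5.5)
Your proof is correct and is essentially the paper's argument. The paper derives the first four bounds from the $L^p$ convergence of $t\psi_t$ to a Gamma variable, i.e.\ $\E\psi_t^k\asymp t^{-k}$; you reach the same bounds from the exact Beta moments of Lemma~\ref{lem:psimom} and the linearization $(1-\psi_t)^2-\E(1-\psi_t)^2=-2(\psi_t-\E\psi_t)+(\psi_t^2-\E\psi_t^2)$, while your product bound via $\E Y_i^2=1+\var(Y_i)\le 1+C/i^2$ is identical to the paper's, and the only (harmless) slip is that $N_2=2(m+\delta)$, not $m+\delta$ (it is $b_2$ that equals $m+\delta$).
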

\begin{proof}
The first four bounds follow from the fact that, as $t\to\infty$, $t\cdot \psi_t$ converges in distribution and in $L^p,p\ge 1$ norm to $\xi_{m+\delta }/\E\xi_{2m+\delta }$ where $\xi_m\sim\text{Gamma}(m,1)$. For the last bound, observe that $\E Y_t^2\ge (\E Y_t)^2=1$ for all $t>1$. Therefore, it follows from the fact that $\prod_{t=2}^\infty\E Y_t^2 
=\prod_{t=2}^\infty (1+\var(Y_t))\le \prod_{t=2}^\infty (1+C/t^2)\le \exp(C\sum_{t=2}^\infty t^{-2}) <\infty.$
\end{proof}

\begin{lem}\label{lem:powerk}
 For a random variable $\go$ supported on $(0,1)$ and for $k\ge 1$, we have
 \begin{align*}
 \abs{\log \E(1-\go)^k + k\E \go} \le \max\{k(k-1)/2, k/(1-\E \go)\}\cdot \E\go^2.
 \end{align*}
\end{lem}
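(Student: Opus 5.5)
The plan is to sandwich $\E(1-\go)^k$ between two elementary bounds, one from Jensen's inequality and one from a second-order Taylor (Bernoulli-type) inequality, and then take logarithms. Throughout, write $\mu:=\E\go\in(0,1)$ and $\sigma:=\E\go^2$, and note that $\mu^2\le\sigma$ by Cauchy--Schwarz.

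\emph{Upper bound.} For $x\in[0,1]$ and $k\ge1$, Taylor's theorem with Lagrange remainder gives
\[
(1-x)^k= 1-kx+\tbinom{k}{2}(1-\xi)^{k-2}x^2
\]
for some $\xi\in(0,x)$. For $k\ge2$ we have $(1-\xi)^{k-2}\le 1$, and for $k=1$ the identity $(1-x)^1=1-x$ is exact, so in all cases
\[
(1-x)^k\le 1-kx+\tbinom{k}{2}x^2 .
\]
Taking expectations yields $\E(1-\go)^k\le 1-k\mu+\frac{k(k-1)}{2}\sigma$. Since $\E(1-\go)^k>0$, the inequality $\log y\le y-1$ for $y>0$ then gives
\[
\log\E(1-\go)^k+k\mu\le \tfrac{k(k-1)}{2}\,\sigma .
\]
This step works even when $1-k\mu$ is negative, because we only ever take the logarithm of the positive quantity $\E(1-\go)^k$.

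\emph{Lower bound.} Since $x\mapsto(1-x)^k$ is convex on $(0,1)$, Jensen's inequality gives $\E(1-\go)^k\ge(1-\mu)^k$. Hence
\[
\log\E(1-\go)^k\ge k\log(1-\mu).
\]
Next we use $\log(1-\mu)\ge -\mu/(1-\mu)$, which is the inequality $\log y\ge 1-1/y$ applied with $y=1-\mu$. Writing $\mu/(1-\mu)=\mu+\mu^2/(1-\mu)$ and using $\mu^2\le\sigma$, we obtain
\[
\log\E(1-\go)^k+k\mu\ge -\frac{k\mu^2}{1-\mu}\ge -\frac{k}{1-\mu}\,\sigma .
\]

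Combining the two one-sided estimates gives the claimed bound, with constant equal to the larger of $k(k-1)/2$ and $k/(1-\E\go)$. The argument is elementary and I do not expect a serious obstacle. The only points needing care are to apply $\log y\le y-1$ to $\E(1-\go)^k$ itself rather than to $1-k\mu$, and to treat the case $k=1$ separately in the Taylor bound.
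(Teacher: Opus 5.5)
Your proof is correct and follows the paper's argument essentially verbatim. The upper bound comes from $(1-x)^k\le 1-kx+\binom{k}{2}x^2$ followed by $\log y\le y-1$, which the paper phrases as $1+z\le e^z$. The lower bound comes from Jensen's inequality and $\log(1-\mu)\ge -\mu/(1-\mu)$. You also make explicit the step $(\E\go)^2\le \E\go^2$, which the paper leaves implicit in its final line.
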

\begin{proof}
 We have 
 \begin{align*}
 \E(1-\go)^k \le \E\left(1-k\go + \frac{1}{2}k(k-1)\go^2\right)\le \exp\left(-k\E\go + \frac{1}{2}k(k-1)\E\go^2\right)
 \end{align*} 
 and 
 \begin{align*}
 \E(1-\go)^k\ge (1-\E \go)^k \ge \exp\left(-\frac{k\E\go}{1-\E\go}\right) = \exp\left(-k\E\go - \frac{k(\E\go)^2}{1-\E\go}\right).
 \end{align*}
 This completes the proof.
\end{proof}

\section{Graphical computation}\label{sec:graphical_comp}

We would like to reiterate that random variables $\theta_{ij}$'s are not independent; therefore, when computing the expectation of products, one needs to rewrite these terms in terms of corresponding products of $\psi_{\cdot}$ or $(1- \psi_{\cdot})$ and redo the computation similar to the one above. In this particular example, using Lemma~\ref{lem:powerk} we get (assuming without loss of generality that $j<k$)
 \begin{align*}
 \E\theta_{ij}\theta_{ik}&=\E\psi_i^2\left(\prod_{t=i+1}^{j-1}\E (1-\psi_t)^2\right)\prod_{t=j}^{k-1}\E(1-\psi_t)
 \cong \frac{1}{i^2}\left(\frac{i}{j}\right)^{2\beta}\left(\frac{j}{k}\right)^{\beta}=i^{2\beta-2}j^{-\beta}k^{-\beta}.
 \end{align*}

To derive the expected values and the variances of various subgraph counts in PAM$(m,\delta)$, one needs to compute the sum of such expectations of products. In order to significantly simplify proofs, we introduce a ``graphical computation" lemma (see Lemma~\ref{lem:graphic}) as a convenient bookkeeping mechanism. To make the exact statement intuitive, we find it helpful to first consider the following computation, in which we derive the sum in a brute-force fashion.
\begin{align}
 \sum_{i<j<k}\E \theta_{ij}^2\theta_{ik}\theta_{jk}&=\E\psi_i^3\left(\prod_{t=i+1}^{j-1}(1-\psi_t)^3\right)(1-\psi_j)\psi_j\prod_{t=j}^{k-1}(1-\psi_t)^2\notag\\
 [\text{by Lemma}~\ref{lem:powerk}] \quad&\cong\sum_{i<j<k} \frac{1}{i^3}\left(\frac{i}{j}\right)^{3\beta}
 \frac{1}{j}\left(\frac{j}{k}\right)^{2\beta}\cong\sum_{i<j<k}\frac{1}{i^{3-3\beta}j^{1+\beta}k^{2\beta}}.\label{eq:example1}
\end{align}
We now would like sum first over $k$ (which ranges from $j$ till $n$), then over $j$ (which ranges from $i$ till $n$), and finally over $i$ (which ranges from $1$ till $n$). However it is clear that the answer will depend on the value of $\beta$ as in each step we compare the power of the variable with $1$. Hence, we consider the following three cases.

\textbf{Case of $\beta\in(1/2,1)$}: $\displaystyle
 \sum_{i<j<k}\frac{1}{i^{3-3\beta}j^{1+\beta}k^{2\beta}}\cong\sum_{i<j}\frac{1}{i^{3-3\beta}j^{3\beta}}\cong \sum_{i=1}^n \frac{1}{i^{2}}\cong 1.
$

\textbf{Case of $\beta= 1/2$}: $\displaystyle
 \sum_{i<j<k}\frac{1}{i^{3-3\beta}j^{1+\beta}k^{2\beta}}=\sum_{i<j<k}\frac{1}{i^{\frac{3}{2}}j^{\frac{3}{2}}k}\cong\log n\sum_{i<j}\frac{1}{i^{\frac{3}{2}}j^{\frac{3}{2}}}\cong \log n.
$

\textbf{Case of $\beta\in (0,1/2)$}: $\displaystyle
 \sum_{i<j<k}\frac{1}{i^{3-3\beta}j^{1+\beta}k^{2\beta}}\cong \sum_{i<j}\frac{n^{1-2\beta}}{i^{3-3\beta}j^{1+\beta}}\cong \sum_{i=1}^n\frac{n^{1-2\beta}}{i^{3-2\beta}}\cong n^{1-2\beta}.
$

Therefore
\begin{align}\label{eq:example1_1}
 \sum_{i<j<k}\E \theta_{ij}^2\theta_{ik}\theta_{jk}&=\begin{cases}
 1 &\text{ if } \beta>1/2\\
 \log n &\text{ if } \beta=1/2\\
 n^{1-2\beta}&\text{ if } \beta<1/2
 \end{cases}.
\end{align}

Indeed, each time when we sum over a variable we compare its power, denote it by $\gamma $, with $1$ (or with $-1$ if it is in the numerator) and if $\gamma >1$ it contributes a constant factor and $(\gamma -1)$ to the power of the next variable, if $\gamma =0$ it gives a $\log n$ factor, while if $\gamma <1$ it contributes $n^{\abs{\gamma -1}}$ factor. Therefore, we formalize this computation in the following general lemma.

\begin{lem}[Graphical computation]\label{lem:graphic}
 For $m\ge 2$ and $\delta>-m$ let $\beta:=(m+\delta)/(2m+\delta)$.
 Let $H=([k],E)$ be a finite connected (undirected) multigraph that does not contain self-loops but might contain multi-edges. Then the expectation of the $H$-subgraph count in $\pam(m,\delta)$ is given by
 \begin{align}
 &\E \sum_{1\le i_1< i_2< \cdots< i_k\le n} \prod_{(a,b)\in E\atop a>b}\theta_{i_b i_a}\cong 
 F_k\biggl(d_1-\beta \ell_1,d_2-\beta (\ell_2-\ell_1),\ldots, d_{k}-\beta(\ell_{k}-\ell_{k-1})\biggr),\label{eq:graphic_fla}
 \end{align}
 where
 \begin{itemize}
 \item $d_j$ is the in-degree of vertex $j$ (\ie~the number of edges $(j,k)\in E$ with $k>j$),
 \item $\ell_j$ is the number of edges $(u,v)\in E$ such that $v\le j< u$,
 \item $F_k(a_1,a_2,\ldots,a_k):=
 \sum_{1\le i_1< i_2< \cdots< i_k\le n} 
 \left(\prod_{j=1}^k i_j^{a_j}\right)^{-1}$.
 \end{itemize}

 Moreover, the function $F_k$ satisfies
 \begin{align*}
 F_k(&a_1,a_2,\ldots,a_k) \\
 &\cong F_{t-1}(a_1,a_2,\ldots,a_{t-1})\cdot
 \begin{cases}
 n^{k-t+1-(a_t+a_{t+1}+\cdots+a_k)} & \text{ if } a_t+a_{t+1}+\cdots+a_{k}< k-t+1\\
 \log n &\text{ if } a_t+a_{t+1}+\cdots+a_{k}= k-t+1
 \end{cases},
 \end{align*}
 where $t:=\max\{i\in [k]\mid a_i+a_{i+1}+\cdots+a_{k}\le k-i+1\}$ and 
 \begin{align*}
 F_k(a_1,a_2,\ldots,a_k) 
 \cong 1 \text{ if no such $t$ exists.}
 \end{align*}
\end{lem}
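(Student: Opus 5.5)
The proof splits into two parts: first reduce the expectation to the sum $F_k$, then evaluate $F_k$ by summing one variable at a time, starting from the largest index.

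\emph{Reduction to $F_k$.} Fix indices $i_1<\dots<i_k$ and expand each $\theta_{i_bi_a}=\psi_{i_b}\prod_{t=i_b+1}^{i_a-1}(1-\psi_t)$. The product over $E$ then factorizes over the independent variables $\psi_t$:
\begin{itemize}
\item at $t=i_j$ we get the factor $\psi_{i_j}^{d_j}(1-\psi_{i_j})^{\ell_{j-1}-(\text{number of edges with upper endpoint } j)}$;
\item at $t$ strictly between $i_j$ and $i_{j+1}$ we get the factor $(1-\psi_t)^{\ell_j}$.
\end{itemize}
By independence the expectation is a product of one-dimensional moments. Lemma~\ref{lem:psimom} gives $\E\psi_{i}^{d}(1-\psi_i)^{r}\cong i^{-d}$; this holds uniformly, since $t\psi_t$ has bounded moments and $\E(1-\psi_i)^r$ is bounded below. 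The edges $(1,\cdot)$ involve $\psi_1\equiv 1$, so for $i_1=1$ the factor is simply $1$, which is consistent with $1^{-d_1}$.

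For the gaps, Lemma~\ref{lem:powerk} gives $\log \E(1-\psi_t)^{\ell}=-\ell\,\E\psi_t+O(t^{-2})$. With $\E\psi_t=\beta/t+O(t^{-2})$ this yields
\[
\prod_{t=i_j+1}^{i_{j+1}-1}\E(1-\psi_t)^{\ell_j}\cong (i_j/i_{j+1})^{\beta\ell_j},
\]
with constants uniform in the indices because $\sum_t t^{-2}<\infty$. Multiplying these factors, the exponent of $i_j$ collects $-d_j$, $+\beta\ell_j$ and $-\beta\ell_{j-1}$, so the summand is $\cong \prod_j i_j^{-(d_j-\beta(\ell_j-\ell_{j-1}))}$ with $\ell_0=0$. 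This matches~\eqref{eq:graphic_fla}, since $\ell_k=0$ makes the last term $d_k+\beta\ell_{k-1}$. Summing over the indices gives $F_k$ up to constants.

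\emph{Evaluating $F_k$.} Sum over $i_k$ first, then over $i_{k-1}$, and so on, using the elementary facts
\[
\sum_{x>y}^{n} x^{-a}\cong y^{1-a}\ (a>1),\qquad \cong \log(n/y)\ (a=1),\qquad \cong n^{1-a}\ (a<1).
\]
Induct backwards. Let $t$ be the largest index with $a_t+\dots+a_k\le k-t+1$.
\begin{itemize}
\item For every $j>t$ we have $a_j+\dots+a_k>k-j+1$. Summing $i_k,\dots,i_{t+1}$ therefore only passes the excess $a_j+\dots+a_k-(k-j)$ down as a negative power of $i_j$. The induction hypothesis is that after summing $i_k,\dots,i_{j}$ we hold $i_{j-1}^{-(a_j+\dots+a_k-(k-j+1))}$, and the strict inequality keeps this exponent positive.
\item At index $t$ the accumulated exponent of $i_t$ is $a_t+\dots+a_k-(k-t)\le 1$. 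Summing over $i_t$ produces $n^{k-t+1-\sum_{j\ge t}a_j}$ in the strict case and a logarithm in the equality case. In both cases the dependence on $i_{t-1}$ disappears, leaving $F_{t-1}(a_1,\dots,a_{t-1})$.
\item If no such $t$ exists, the same passing-down continues through $i_1$, and the final sum over $i_1$ converges to a constant.
\end{itemize}

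\emph{Main obstacles.} The real difficulty is the lower bound and the boundary effects. In the equality case the sum gives $\log(n/i_{t-1})$ rather than $\log n$, and in the power case the region $x$ near $n$ matters. I would handle this by restricting $i_1,\dots,i_{t-1}\le \sqrt n$ for the lower bound. This costs only a constant factor when $F_{t-1}$ is dominated by small indices. However, $F_{t-1}$ itself may carry its own $n$-power from smaller indices, so the restriction must be applied inductively: at each stage, confine the indices to a geometric window $[n^{c_j},n^{c'_j}]$, which keeps each factor comparable to its claimed order.

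I expect this uniformity of constants across nested windows to be the main technical point. The reduction step is routine given Lemmas~\ref{lem:psimom} and~\ref{lem:powerk}.
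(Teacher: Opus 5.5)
Your reduction to $F_k$ and your top-down summation follow the paper's proof. Your factorization is actually more accurate than the paper's display, which drops the factors $(1-\psi_{i_j})$ coming from edges that pass over an interior vertex; this is harmless, since those factors only change constants. All your upper bounds are valid. The lower bounds in the passing-down and strict-power steps are also fine, because there you only restrict the previous index to $\le n/2$, which costs a constant. The genuine gap is the step you single out yourself: the lower bound in the equality case when $F_{t-1}$ carries its own power of $n$. No choice of geometric windows can fix this, because the claimed order is false in that sub-case.

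Suppose $a_t+\cdots+a_k=k-t+1$ but $a_s+\cdots+a_{t-1}<t-s$ for some $s<t$. Then $F_{t-1}$ grows like $n^{p}$ with $p>0$ (up to logarithms). Restricting to $i_{t-1}\le N$ just replaces $n$ by $N$ in $F_{t-1}$. Hence the part of $F_{t-1}$ with $i_{t-1}\in(n2^{-r-1},n2^{-r}]$ is $\lesssim 2^{-rp}F_{t-1}$, while on that block $\log(n/i_{t-1})\le (r+1)\log 2$. Summing over $r$ gives $F_k\cong F_{t-1}$, with no extra $\log n$. Conversely, any window that forces $\log(n/i_{t-1})\gtrsim\log n$, i.e.\ $i_{t-1}\le n^{c'}$ with $c'<1$, loses a factor $n^{(1-c')p}$ from $F_{t-1}$. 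Two concrete counterexamples:
\begin{enumerate}
\item[(a)] $F_2(0,1)=\sum_{i_2=2}^{n}(i_2-1)/i_2\cong n$, whereas the recursion gives $F_1(0)\log n\cong n\log n$.
\item[(b)] For a graph-generated instance, take Example~\ref{ex:example2} with $\beta<1/2$. There $F_5(3-3\beta,2-\beta,1,2\beta,1)\cong n^{1-2\beta}\log n\cong F_4(3-3\beta,2-\beta,1,2\beta)$. So $F_6\cong n^{2-4\beta}\log n$, not the $n^{2-4\beta}(\log n)^2$ produced by the recursion and stated in the example.
\end{enumerate}
The paper's proof has the same hole: it simply asserts $F_k\cong F_{k-1}\log n$ whenever $a_k=1$.

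What is true is that the equality case yields the extra factor $\log n$ only if $a_s+\cdots+a_{t-1}\ge t-s$ for all $s<t$; otherwise $F_k\cong F_{t-1}$. Equivalently, set $V_s:=(k-s+1)-\sum_{j\ge s}a_j$ for $s\in[k]$ and $V_{k+1}:=0$. Then $F_k\cong n^{p}(\log n)^{q}$, where $p=\max_s V_s$ and $q+1=\#\{s: V_s=p\}$. For this corrected statement, the dyadic estimate above gives the upper bound. Your window idea then gives the lower bound: place the indices lying between consecutive maximizers of $V$ at a common dyadic scale. The applications in the proof of Theorem~\ref{thm:ET} and in Section~\ref{sec:fluct_order} never enter the bad sub-case.
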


Note that the function $F_k$ also depends on $n$, which we omit from the notation as we do with many quantities throughout the paper. Before delving into the proof, we present a few applications.

\begin{lem}\label{cor:cycles_order}
 Suppose $m\ge2$ and $\delta>0$. Let $H=([k], E)$ be a finite connected multigraph such that all vertices have degrees at least two. Then the expectation of the $H$-subgraph count in $\pam(m,\delta)$ is
 \begin{enumerate}[\upshape (i)]
 \item\label{cor:cycles_order_part1} $\Theta(\log n)$ if $H$ is a cycle,
 \item\label{cor:cycles_order_part2} $\Theta(1)$ otherwise. 
 \end{enumerate}
\end{lem}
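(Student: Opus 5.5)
The plan is to apply the graphical computation Lemma~\ref{lem:graphic} directly and show that, when $\beta=(m+\delta)/(2m+\delta)\in(1/2,1)$ (equivalently $\delta>0$), the index $t$ in that lemma exists only when $H$ is a cycle, and then $t=1$. Since the $H$-subgraph count is a finite sum, over the labelings of $H$ by increasing vertex labels, of sums of the form \eqref{eq:graphic_fla}, it suffices to treat one fixed labeling $H=([k],E)$. The hypotheses (connected, all degrees at least two) are invariant under relabeling, and the order $\Theta(\cdot)$ of a finite sum of positive terms is the maximum of the orders of its summands.

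\textbf{Rewriting the tail sums.} Fix $t\in[k]$ and set $S=\{t,\dots,k\}$ with $s=|S|=k-t+1$. Telescoping gives
\[
\sum_{j=t}^k a_j=\sum_{j=t}^k d_j-\beta(\ell_k-\ell_{t-1}).
\]
Here $\ell_k=0$, and $\sum_{j\ge t}d_j=e(S)$ is the number of edges (with multiplicity) whose lower endpoint lies in $S$, i.e.\ the edges with both endpoints in $S$. The quantity $\ell_{t-1}$ is the number of edges crossing between $[t-1]$ and $S$; write $\ell:=\ell_{t-1}$, with $\ell_0=0$. Hence the condition defining $t$ becomes
\[
e(S)+\beta\ell\le s.
\]

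\textbf{Degree counting.} Every vertex has degree at least two, so summing degrees over $S$ gives $2e(S)+\ell\ge 2s$, that is, $e(S)\ge s-\ell/2$. Therefore
\[
e(S)+\beta\ell\ge s+(\beta-\tfrac12)\ell\ge s .
\]
If $t\ge2$, connectivity of $H$ forces $\ell\ge1$, and since $\beta>1/2$ the inequality is strict, so the condition fails. If $t=1$, then $\ell=0$ and $e(S)=|E|$. Equality $|E|\le k$ then holds if and only if all degrees equal exactly two. A connected multigraph without self-loops in which every vertex has degree two is a cycle; this includes the $2$-cycle given by a double edge, for which the count is $\sum_{i<j}\E\theta_{ij}^2$ and is consistent with the same conclusion.

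\textbf{Conclusion from Lemma~\ref{lem:graphic}.} If $H$ is not a cycle, no admissible $t$ exists, so $F_k\cong1$, which gives part~\ref{cor:cycles_order_part2}. If $H$ is a cycle, then $t=1$ with $\sum_j a_j=k$ exactly, so the lemma yields $F_k\cong F_0\cdot\log n=\log n$, with the convention $F_0\equiv1$; this gives part~\ref{cor:cycles_order_part1}.

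The only delicate point is bookkeeping in the identification $\sum_{j\ge t}d_j=e(S)$ and $\ell_{t-1}=$ the number of crossing edges, so that multi-edges are counted consistently in both the degree sum and the exponent $a_j$; after that, the argument is the single inequality above.
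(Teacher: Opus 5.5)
Your proof is correct and follows essentially the same route as the paper. Both apply Lemma~\ref{lem:graphic} and rewrite the tail sums $\sum_{j\ge t}a_j$ as $e(S)+\beta\ell_{t-1}$; the paper reaches this form via the identity $\ell_i-\ell_{i-1}=2d_i-\deg_H(i)$ and telescoping. Both then use minimum degree two, connectivity and $\beta>1/2$ to rule out $t\ge 2$, and compare $|E|$ with $|V|$ at $t=1$. Your set-theoretic bookkeeping with $e(S)$ and crossing edges is cleaner than the paper's index manipulation, and you also explicitly account for the finitely many vertex labelings.
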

\begin{proof}
 Recalling the notations $\{(d_i,\ell_i)\}_{i\in [k]}$ from Lemma~\ref{lem:graphic} notice that
 \begin{align*}
 \ell_i-\ell_{i-1}=d_{\mathrm{in}}(i)-d_{\mathrm{out}}(i)=2d_i-\deg_H(i),
 \end{align*}
 where $d_{\mathrm{in}}(i)=d_i$ (resp.~$d_{\mathrm{out}}(i)$) denotes the in-degree (resp.~out-degree) in the orientation of edges of $H$ from the endpoint with the higher index to the one with the lower one, and $\deg_H(i)$ denotes the degree of $i$ in $H$. Consider any $t>1$, then summing over $i$ in $\{t,t+1,\ldots,k\}$ we get 
 \begin{align*}
 -\ell_{t-1}&=\sum_{j=t}^k2d_i-\deg_H(i)=2\left(\sum_{j=t}^kd_i-\frac{1}{2}\deg_H(i)\right).
 \end{align*}
 In the next expression, we use the facts that $\deg_H(i) \ge 2$ for all $i$, that $H$ is connected (implying $\ell_i > 0$ for every $i \in [k-1]$), and that $\delta > 0$, which in turn yields $\beta > 1/2$. Using these observations, we derive that for all $t>1$
 \begin{align*}
 \sum_{j=t}^{k} (d_j-\beta(\ell_j-\ell_{j-1}))=\sum_{j=t}^{k-1}d_j+\beta\ell_{t-1}&=(k-t+1)+\sum_{j=t}^{k-1}(d_j-1)+\beta\ell_{t-1}\\
 &\ge(k-t+1)+\sum_{j=t}^{k-1}\left(d_j-\frac{1}{2}\deg_H(i)\right)+\beta\ell_{t-1}\\
 &=(k-t+1)+\left(\beta-\frac{1}{2}\right)\ell_{t-1}>k-t+1.
 \end{align*}
 Finally, the case when $t=1$ is treated as follows
 \begin{align*}
 \sum_{j=1}^k (d_j-\beta(\ell_j-\ell_{j-1}))-k=\sum_{j=1}^k d_j-k=\abs{E(H)}-\abs{V(H)},
 \end{align*}
 which equals zero when $H$ is a cycle and is strictly positive when $H$ is any other connected multigraph with the lowest degree at least two.
 The conclusion now follows from Lemma~\ref{lem:graphic}.
\end{proof}

The following examples illustrate why we think of this lemma as a graphical computation and present a simple way to compute the parameters $\{(d_i,\ell_i)\}_{i\in [k]}$.

\begin{ex}\label{ex:example1}
 Returning to the example from~\eqref{eq:example1} we first represent $\theta_{ij}^2\theta_{ik}\theta_{jk}$ graphically, by placing variables on a segment respecting their order and adding a directed edge corresponding to each $\theta_{\cdot,\cdot}$ as shown in Figure~\ref{fig:Ex3.3}.
\begin{figure}[htb]
\begin{center}
\begin{tikzpicture}[thick, scale=0.8]
 \node[big dot,label=below:$i$] (i) at (-8,0) {};
 \node[big dot,label=below:$j$] (j) at (-5,0) {};
 \node[big dot,label=below:$k$] (k) at (-2,0) {};
 \draw [line width=0.5mm] (i) -- (k);

 \draw [<-,red] (i) to [bend left=50](j);
 \draw [<-,red] (i) to [bend left=30](j);
 \draw [<-,red] (j) to [bend left=40](k);
 \draw [<-,red] (i) to [bend left=50](k);

 \draw[dashed] (-8,-0.2) rectangle (-7.5,0.8);
 \draw (-8,1) node[above] {$\ell_i=3$};
 \draw (-8,-1) node[below] {$d_i=3$};
 
 \draw[dashed] (-5,-0.2) rectangle (-4.5,1.7);
 \draw (-5,2) node[above] {$\ell_j=2$};
 \draw (-5,-1) node[below] {$d_j=1$};
 
 \draw (-2,-1) node[below] {$d_k=0$};
 \draw (-2,1) node[above] {$\ell_k=0$};
 \end{tikzpicture}	
\end{center}
\caption{Graphical computation diagram for Example~\ref{ex:example1}.\label{fig:Ex3.3}}
\end{figure} 

Observe that $\ell_i$ is the number of edges that intersect the corresponding dashed rectangle.
Then Lemma~\ref{lem:graphic} yields that
\begin{align*}
 \sum_{i<j<k}\E \theta_{ij}^2\theta_{ik}\theta_{jk}
 \cong F_3(3-3\beta,1+\beta,2\beta)
 &=
 \begin{cases}
 1&\text{ if } \beta>1/2\\
 F_2(3/2,3/2) \cdot \log n &\text{ if } \beta=1/2\\
 F_2(3+3\beta,1+\beta) \cdot n^{1-2\beta}&\text{ if } \beta<1/2
 \end{cases}\\
 &=
 \begin{cases}
 1 &\text{ if } \delta>0\\
 \log n &\text{ if } \delta=0\\
 n^{-\delta/(2m+\delta)}&\text{ if } \delta<0
 \end{cases},
\end{align*}
recovering the answer from~\eqref{eq:example1_1}.
\end{ex}
 
\begin{ex}\label{ex:example2}
Consider a directed graph $H$ on six vertices as presented in Figure~\ref{fig:Ex3.4_1}. To compute $\E \sum_{1\le i_1< i_2< \cdots< i_k\le n} \prod_{(a\to b)\in E}\theta_{i_b i_a}$ we first depict $H$ as shown in Figure~\ref{fig:Ex3.4_2}, this makes it easier to compute the pairs ${(d_i,\ell_i)}_{i\in[6]}$. 
\begin{figure}[htb]
\begin{center}
\begin{tikzpicture}[thick, scale=0.7]
 \node[big dot,label=above:$1$] (j1) at (5,8) {};
 \node[big dot,label=above:$2$] (j2) at (7,7) {};
 \node[big dot,label=above:$3$] (j3) at (7,9) {};
 \node[big dot,label=above:$4$] (j4) at (3,7) {};
 \node[big dot,label=above:$5$] (j5) at (9,8) {};
 \node[big dot,label=above:$6$] (j6) at (11,7) {};
 \draw [-,red] (j1) to (j4);
 \draw [-,red] (j1) to (j3);
 \draw [-,red] (j1) to (j2);
 \draw [-,red] (j2) to (j4);
 \draw [-,red] (j2) to (j6);
 \draw [-,red] (j3) to (j5);
 \draw [-,red] (j5) to (j6);
 \end{tikzpicture}	
\end{center}
\caption{A graph from Example~\ref{ex:example2}.\label{fig:Ex3.4_1}}
\end{figure} 
\begin{figure}[htb]
\begin{center}
\begin{tikzpicture}[thick, scale=0.7] 
 \node[big dot,label=below:$i_1$] (i1) at (0,0) {};
 \node[big dot,label=below:$i_2$] (i2) at (3,0) {};
 \node[big dot,label=below:$i_3$] (i3) at (6,0) {};
 \node[big dot,label=below:$i_4$] (i4) at (9,0) {};
 \node[big dot,label=below:$i_5$] (i5) at (12,0) {};
 \node[big dot,label=below:$i_6$] (i6) at (15,0) {};
 \draw [line width=0.5mm] (i1) -- (i6);

 \draw [<-,red] (i1) to [bend left=40](i4);
 \draw [<-,red] (i1) to [bend left=40](i3);
 \draw [<-,red] (i1) to [bend left=40](i2);
 \draw [<-,red] (i2) to [bend left=30](i4);
 \draw [<-,red] (i2) to [bend left=40](i6);
 \draw [<-,red] (i3) to [bend left=40](i5);
 \draw [<-,red] (i5) to [bend left=40](i6);

 \draw (0,1) node[above] {$\ell_1=3$};
 \draw (0,-1) node[below] {$d_1=3$};
 
 \draw[dashed] (3,-0.2) rectangle (3.5,2.2);
 \draw (3,2) node[above] {$\ell_2=4$};
 \draw (3,-1) node[below] {$d_2=2$};
 
 \draw[dashed] (6,-0.2) rectangle (6.5,2.2);
 \draw (6,-1) node[below] {$d_3=1$};
 
 \draw (6,2.2) node[above] {$\ell_3=4$};
 \draw[dashed] (9,-0.2) rectangle (9.5,2.6);
 \draw (9,2.6) node[above] {$\ell_4=2$};
 \draw (9,-1) node[below] {$d_4=0$};
 
 \draw[dashed] (12,-0.2) rectangle (12.5,2.2);
 \draw (12,2) node[above] {$\ell_5=2$};
 \draw (12,-1) node[below] {$d_5=1$};
 
 \draw (15,1) node[above] {$\ell_6=0$};
 \draw (15,-1) node[below] {$d_6=0$};
 \end{tikzpicture}	
\end{center}
\caption{Graphical computation diagram for Example~\ref{ex:example2}.\label{fig:Ex3.4_2}}
\end{figure} 

Then Lemma~\ref{lem:graphic} yields that
\begin{align*}
 \E \sum_{1\le i_1< i_2< \cdots< i_k\le n} \prod_{(a\to b)\in E}\theta_{i_b i_a}&\cong F_6\left(3-3\beta,2-\beta,1,2\beta,1,2\beta\right)\\
 &\cong \begin{cases}
 1 &\text{ if } \beta>1/2\\
 F_2(3/2,3/2)\cdot F_1(1)^4 &\text{ if } \beta=1/2\\
 F_2(3-3\beta,2-\beta)\cdot F_1(1)^2 \cdot F_1(2\beta)^2 &\text{ if } \beta<1/2
 \end{cases}.
\end{align*}
\end{ex}
Such computations are very common in the computation of expected values; in particular, Lemma~\ref{lem:graphic} allows us to recover Theorem~\ref{thm:ET}. Recall that the results in~\cite{BollobasRiordan,EggemannNoble,SubgraphsPAM} also give the constants in terms of $m$ and $\delta$, which we are ignoring. Careful tracking of the constants in our computations would indeed recover their values; however, this would entail repeating the computations of the original authors. 

\begin{proof}[Proof of Theorem~\ref{thm:ET}]
 From the centered-edge decomposition as in~\eqref{term:central0}--\eqref{term:central3} one can see that $ \E T_n\cong\E\sum_{i<j<k}\theta_{ij}\theta_{ik}\theta_{jk}.$
 In this case, the graphical computations are simple and given in Figure~\ref{fig:triangle_th1.1}.
\begin{figure}[htb]
\begin{center}
\begin{tikzpicture}[thick, scale=0.65] 
 \node[big dot,label=below:$i$] (1) at (0,0) {};
 \node[big dot,label=below:$j$] (2) at (3,0) {};
 \node[big dot,label=below:$k$] (3) at (6,0) {};
 \draw [line width=0.5mm] (1) -- (3);

 \draw [<-,red] (1) to [bend left=40](2);
 \draw [<-,red] (1) to [bend left=40](3);
 \draw [<-,red] (2) to [bend left=40](3);

 \draw (0,1) node[above] {$\ell_1=2$};
 \draw (0,-1) node[below] {$d_1=2$};
 
 \draw[dashed] (3,-0.2) rectangle (3.5,1.7);

 \draw (3,2) node[above] {$\ell_2=2$};
 \draw (3,-1) node[below] {$d_2=1$};
 
 \draw (6,-1) node[below] {$d_3=0$};
 \draw (6,1) node[above] {$\ell_3=0$};
 \end{tikzpicture}	
\end{center}
\caption{Graphical computation diagram a triangle.\label{fig:triangle_th1.1}}
\end{figure}
By Lemma~\ref{lem:graphic}, recalling that $\beta:=(m+\delta)/(2m+\delta)$, we have
\begin{align*}
 \E T_n
 &\cong F_3(2-2\beta,1,2\beta)\cong\begin{cases}
 \log n &\text{ if } \beta >1/2\\
 (\log n)^3&\text{ if } \beta =1/2\\
 n^{1-2\beta}\log n &\text{ if } \beta <1/2 
 \end{cases}
 =\begin{cases}
 \log n & \text{ if } \delta>0\\
 (\log n)^3 & \text{ if } \delta=0\\
 n^{-\delta/(2m+\delta)}\log n & \text{ if } \delta\in(-m,0)
 \end{cases}.
\end{align*}
This completes the proof.
\end{proof}

The growth of the expected number of cycles and cliques can be derived from existing results in the literature (see~\cite{BollobasRiordan} for $\ell$-cycle when $\delta =0$ and the variational description in~\cite{SubgraphsPAM} for the general case). Here, we present a direct proof of these asymptotics using the graphical computation lemma. The proof also suggests a possible limit in the $\delta <0$ case.
\begin{lem}[Cycles and cliques]\label{lem:cycle}
Fix an integer $\ell \ge 4$. Let $C_\ell(n)$ and $K_\ell(n)$ denote, respectively, the number of $\ell$-cycles and $\ell$-cliques in the preferential attachment model $\mathrm{PAM}_{n}(m,\delta)$. Then the expectations $\E C_\ell(n)$ and $\E K_\ell(n)$ satisfy the following asymptotic regimes:

\begin{equation*}
    \E C_\ell(n)\cong \begin{cases}
 1 & \text{ if } \delta>0\\
 (\log n)^{\ell} & \text{ if } \delta=0\\
 n^{(1-2\beta)\lfloor \ell/2\rfloor} (\log n)^{\ell-2\lfloor \ell/2\rfloor} & \text{ if } \delta\in(-m,0)
 \end{cases},
\end{equation*}
and
\begin{equation*}
    \E K_\ell(n)\cong \begin{cases}
 1 & \text{ if } \delta>-\frac{m(\ell-3)}{\ell-2}\\[1em]
 \log n & \text{ if } \delta=-\frac{m(\ell-3)}{\ell-2}\\[1em]
 n^{1-\beta(\ell-1)} & \text{ if } \delta\in\left(-m,-\frac{m(\ell-3)}{\ell-2}\right)
 \end{cases},
\end{equation*}
where $\beta:=(m+\delta)/(2m+\delta)$.
\end{lem}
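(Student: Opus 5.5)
The approach is to reduce both counts to finitely many evaluations of the graphical computation Lemma~\ref{lem:graphic}. Fix the pattern graph $H$ (the $\ell$-cycle or the $\ell$-clique). The expected $H$-count is, up to the constant factors coming from the choice of edge labels in $[m]$, a sum over the finitely many ways of ordering $V(H)$ increasingly. Each ordering gives a labelled multigraph on $[\ell]$ to which \eqref{eq:graphic_fla} applies. Since there are $O(1)$ orderings and all terms are nonnegative, $\E C_\ell(n)$ and $\E K_\ell(n)$ are $\cong$ the maximum over orderings of $F_\ell(a_1,\dots,a_\ell)$, where $a_j=d_j-\beta(\ell_j-\ell_{j-1})$. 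It then remains to compute the exponents and take the maximum.

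\textbf{Cliques.} There is only one ordering up to symmetry, with $d_j=\ell-j$ and $\ell_j=j(\ell-j)$. The tail sums are
\[
\sum_{j=t}^{\ell}a_j=(\ell-t+1)\,g(t),\qquad g(t):=\tfrac{\ell-t}{2}+\beta(t-1),
\]
and $g$ is linear in $t$ with slope $\beta-\tfrac12$. If $\beta\ge 1/2$, then $\min_t g(t)=g(1)=(\ell-1)/2>1$ because $\ell\ge4$, so no admissible $t$ exists and $F_\ell\cong1$. If $\beta<1/2$, the minimum of $g$ is at $t=\ell$, where $g(\ell)=\beta(\ell-1)$. Hence the condition of Lemma~\ref{lem:graphic} holds at $t=\ell$ iff $\beta(\ell-1)\le1$, which is equivalent to $\delta\le -m(\ell-3)/(\ell-2)$. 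In that case we get a factor $n^{1-\beta(\ell-1)}$, or $\log n$ in the equality case. It remains to check that the prefix $F_{\ell-1}(a_1,\dots,a_{\ell-1})\cong1$. The prefix tail sums are $(\ell-t+1)g(t)-\beta(\ell-1)$, compared with $\ell-t$. This is again a linear-type comparison whose worst case is at $t=\ell-1$, giving $1+\beta(\ell-3)>1$, so it is never admissible. This yields the three clique regimes.

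\textbf{Cycles.} The case $\delta>0$ is exactly Lemma~\ref{cor:cycles_order}. For $\delta\le0$, classify the vertices of an ordered $\ell$-cycle by comparing their label with the labels of their two neighbours:
\begin{itemize}
\item a local maximum has $d_j=0$ and $\ell_j-\ell_{j-1}=-2$, so $a_j=2\beta\le1$;
\item a local minimum has $d_j=2$ and $\ell_j-\ell_{j-1}=2$, so $a_j=2-2\beta$;
\item every other vertex has $a_j=1$.
\end{itemize}
The number $p$ of local maxima equals the number of local minima and satisfies $p\le\lfloor\ell/2\rfloor$. The expected answer for a given ordering is
\[
F_\ell\cong n^{(1-2\beta)p}(\log n)^{\ell-2p}\quad(\beta<1/2),\qquad F_\ell\cong(\log n)^{\ell}\quad(\beta=1/2).
\]
Heuristically, each maximum contributes $n^{1-2\beta}$, each intermediate vertex contributes $\log n$, and each minimum absorbs its excess. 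Maximising over orderings forces $p=\lfloor\ell/2\rfloor$, realised by an alternating ordering such as $1,\ell,2,\ell-1,\dots$, and gives the claimed rate. When $\ell$ is odd, one leftover intermediate vertex supplies the remaining $\log n$.

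\textbf{Main obstacle.} The hard step is justifying this heuristic through the recursion of Lemma~\ref{lem:graphic}, because the lemma groups consecutive variables rather than treating them one at a time. For the lower bound I would take the explicit alternating ordering and run the recursion directly. Its tail sums can be written down, since the pattern of $a_j$'s is periodic. For the upper bound over a general ordering I would not run the recursion. Instead, I would bound $F_\ell$ directly by summing the variables from the largest label downward. Each summation over a vertex $j$ either produces $n^{1-a_j'}$, $\log n$, or a constant, while the exponent $a_j'-1$ is pushed onto the next variable. A local minimum carries exponent $2-2\beta\ge1$ and its two neighbouring edges can transfer at most $2(1-2\beta)$ of excess, so I expect the total power of $n$ to be at most $(1-2\beta)p$ and the total number of $\log n$ factors to be at most $\ell-2p$. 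Making this bookkeeping rigorous is where I expect most of the work to lie.
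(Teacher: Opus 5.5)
Your route is the paper's. You sum Lemma~\ref{lem:graphic} over the $O(1)$ orderings. For cliques, your $g(t)$ and the prefix check at $t=\ell-1$ are the paper's inequality $\sum_{j=\ell-i}^{\ell-1}x_j>i$. For cycles, your maximum/intermediate/minimum classification is the paper's in-degree $0/1/2$ classification, and your alternating ordering realises its maximiser $x_1=\cdots=x_{\lfloor \ell/2\rfloor}=2-2\beta$.

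Two steps, however, do not deliver the statement as written, and the paper's proof has the same defects.

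\textbf{Cycles with $\delta>0$.} Lemma~\ref{cor:cycles_order}, part (i), gives $\Theta(\log n)$ for every cycle, not $\Theta(1)$. For example, the cycle visiting $1,2,3,4$ has exponents $(2-2\beta,1,1,2\beta)$. When $\beta>1/2$, every tail sum $a_t+\cdots+a_4$ with $t>1$ exceeds $5-t$, while $t=1$ gives equality, so $F_4\cong\log n$. Citing that lemma therefore proves $\E C_\ell(n)\cong\log n$, and the first case of the cycle formula should read $\log n$.

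\textbf{Cliques and edge labels.} The edge-label prefactor you treat as a positive constant is $\prod_v m(m-1)\cdots(m-o_v+1)$, where $o_v$ is the number of edges from $v$ to older vertices of the copy. For the clique, the youngest vertex has $o_v=\ell-1$. So for $m<\ell-1$ one has $K_\ell(n)\equiv 0$, and the clique part needs the extra hypothesis $m\ge\ell-1$. For cycles $o_v\le 2\le m$, so nothing changes there.

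\textbf{The per-ordering formula.} Separately, your formula $n^{(1-2\beta)p}(\log n)^{\ell-2p}$ for a single ordering is not exact. The cycle visiting $1,2,4,3,5$ has $p=2$ and exponents $(2-2\beta,1,2-2\beta,2\beta,2\beta)$, and gives $F_5\cong n^{2(1-2\beta)}$ with no logarithm. Only the upper bound matters, though, and your ``main obstacle'' is short:
\begin{enumerate}
\item The blocks peeled off by the recursion in Lemma~\ref{lem:graphic} partition a suffix $[t_0,\ell]$. So the total exponent of $n$ telescopes to $\sum_{j\ge t_0}(1-a_j)=(1-2\beta)\bigl(\#\text{maxima}-\#\text{minima}\bigr)$, counted in $[t_0,\ell]$, which is at most $(1-2\beta)p$.
\item In the equality case the suffix contains no minima. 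Each $\log n$ block then has all exponents equal to $1$, and by maximality of $t$ it is a single intermediate vertex. This gives at most $\ell-2p$ logarithms.
\item Otherwise the $n$-exponent drops by at least $1-2\beta$, which absorbs any bounded power of $\log n$.
\end{enumerate}
This argument actually justifies the maximisation over orderings, which the paper only asserts.
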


\begin{proof}[Proof of Lemma~\ref{lem:cycle}]
We focus on the $\delta \le 0$ or $\beta\le 1/2$ case, as $\delta >0$ follows from Lemma~\ref{cor:cycles_order}.
For $\ell$-clique we have $\E K_{\ell}(n)\cong F_{\ell}(\mvx)$ where $\mvx=(x_{1},x_{2},\ldots,x_\ell)$ where $x_i=(\ell-i) -\beta(\ell+1-2i)=(1-2\beta)(\ell-i) +\beta(\ell-1), {i\in[\ell]}$. Notice that  $x_{\ell}=\beta(\ell-1)$ and 
\begin{align*}
\sum_{j=\ell-i}^{\ell-1}x_{j} =\sum_{j=1}^{i}  ((1-2\beta)j +\beta(\ell-1))
& = (1-2\beta)i(i+1)/2 +\beta(\ell-1)i \\
&= i+i\cdot \left( \beta(\ell-1) -1+(1-2\beta)(i+1)/2\right)\\
&\ge  i+i\cdot \left( \beta(\ell-1) -1+1-2\beta\right) > i
\end{align*}
 for all $i\ge 1$.  Thus,  $F_{\ell}(\mvx)\cong F_{1}(x_\ell)\cong 1+ \log n \cdot \ind_{\beta=1/(\ell-1)} + n^{1-\beta(\ell-1)}\cdot \ind_{\beta<1/(\ell-1)}$.
 
 For the $\ell$-cycle, we have to separate into cases depending on the vertex embedding. Let us fix an embedding, that is, a subgraph on the vertex set $[\ell]$. Any vertex can have in-degree only from the set $\{0,1,2\}$. Let $a_{i}$ be the number of vertices with in-degree $i$ for $i=0,1,2$. We have $a_{0}+a_{1}+a_{2}=\ell$ and $a_{0}=a_{2}$ as sum of all in-degrees $a_{1}+2a_{2}$ must also be $\ell$. Thus, we can assume that $a_{0}=a_{2}=a$ and $a_{1}=\ell-2a$ for some $a\in \{1,2,\ldots,\lfloor \ell/2\rfloor\}$.  When $\delta =0$ or $\beta=1/2$, $F_{\ell}(x_{1},x_{2},\ldots,x_{\ell})\cong (\log n)^{\ell}$ for all $(\ell-1)!/2$ many embeddings.  Thus, assume that $\delta <0$ or $\beta<1/2$. In particular, for such an embedding the contribution to the mean is $F_{\ell}(x_{1},x_{2},\ldots,x_{\ell})$ where $a$ many $x_{i}$'s are $2\beta$, $a$ many $x_{i}$'s are $(2-2\beta)$ and $(\ell-2a)$ many $x_{i}$'s are $1$.  Given $a$, the maximum growth (in terms of $n$) occurs when $x_{1}=x_{2}=\cdots=x_{a}=2-2\beta$ with the value $n^{(1-2\beta)a}(\log n)^{\ell-2a}$.  Maximizing over $a$, we get the result. Note that, number of maximizing embedding is $\frac{1}{2}a!(a-1)!$ when $\ell=2a$, and $\frac{1}{2}a!(a+1)!$ when $\ell=2a+1$.
\end{proof}

We now conclude this section with a proof of the graphical computation lemma.
\begin{proof}[Proof of Lemma~\ref{lem:graphic}]
 The proof of the lemma consists of two parts. 
 First, we observe that given $i_1<i_2<\cdots<i_k$, we have
 \begin{align*}
 \E \prod_{(a,b)\in E\atop a>b}\theta_{i_b i_a}
 &= \E \prod_{j=1}^{k-1}\left(\psi_{i_j}^{d_j} \prod_{t=i_{j}+1}^{i_{j+1}-1} (1-\psi_t)^{\ell_j}\right)= \prod_{j=1}^{k-1}\frac{\E(\psi_{i_j}^{d_j})}{\E\left((1-\psi_{i_j})^{\ell_j}\right)}\cdot \prod_{j=1}^{k-1} \prod_{t=i_{j}}^{i_{j+1}-1} \E\left((1-\psi_t)^{\ell_j}\right).
 \end{align*}
 One can explicitly derive the moments to compute the sum, but for simplicity, we approximate them to determine their order. As before, we do not write the explicit constants appearing in the ratio, but use the $\cong$ symbol to mean that the ratio of two sides is uniformly bounded away from zero and infinity (the constant will depend on $H$ and $m,\delta$).

 Observe that
 \begin{align*}
 \abs{\E \psi_t - \beta\log(1+1/t)} 
 = \abs{\beta\left(t-\frac{2m}{2m+\delta}\right)^{-1} - \frac{\beta}{t} + \frac{\beta}{t} - \beta\log(1+1/t)}
 \le \frac{C_1}{t^2} 
 \end{align*}
 for all $t\ge 2$ and some constant $C_1$.
 Using Lemma~\ref{lem:powerk} and the fact that $\E \psi_t\le \E \psi_2$ for all $t\ge 2$, we get that
 \begin{align*}
 &\abs{\log\prod_{j=1}^{k-1}\prod_{t=i_{j}}^{i_{j+1}-1} \E(1-\psi_t)^{\ell_j}
 + \sum_{j=1}^{k-1} \sum_{t=i_{j}}^{i_{j+1}-1} \ell_j \E\psi_t }\\
 &\le \sum_{j=1}^{k-1} \sum_{t=i_{j}}^{i_{j+1}-1} \ell_j\cdot \max\left\{\frac{\ell_j-1}{2},\frac{1}{1-\E \psi_2}\right\} \cdot \E\psi_t^2 
 \le C_2\cdot \sum_{t=2}^\infty \E\psi_t^2 \le C_3<\infty
 \end{align*}
 and
 \begin{align*}
 \abs{\sum_{j=1}^{k-1} \sum_{t=i_{j}}^{i_{j+1}-1} \ell_j \E\psi_t - \sum_{j=1}^{k-1} \beta\ell_j \log(i_{j+1}/i_j)}\le C_4<\infty.
 \end{align*}
 Here, constants $C_i$, $i\in[4]$, depend only on $H,m,$ and $\delta$. 
 Therefore, we get
 \begin{align*}
 \E \prod_{(a,b)\in E \atop a>b}\theta_{i_b i_a} \cong \prod_{j=1}^{k-1}\frac{1}{i_j^{d_j}} \cdot \left(\frac{i_j}{i_{j+1}}\right)^{\beta \ell_j} = \prod_{j=1}^k i_j^{-a_j}
 \end{align*}
 where $a_j:=d_j - \beta(\ell_j-\ell_{j-1}), j\in[k]$, in the sense that the ratio is uniformly bounded away from zero and infinity.

 Now, we define the function
 \begin{align*}
 F_k(a_1,a_2,\ldots,a_k):=
 \sum_{1\le i_1< i_2< \cdots< i_k\le n} \frac{1}{
 \prod_{j=1}^k i_j^{a_j}}.
 \end{align*}
 Observe that this is a function of $n$, but we abuse the notation to omit the dependence on $n$.
 The rest of the proof is by induction on $k$. We can easily compute $F_1(a_1)$ as a function of $n$. Consider the general case of $k>1$. If $a_k<1$ or $t=k$, we clearly have
 \begin{align*}
 F_k(a_1,a_2,\ldots,a_k) \cong F_{k-1}(a_1,a_2,\ldots,a_{k-1})\cdot n^{1-a_k}.
 \end{align*}
 Similarly, for $a_k=1$, we have
 \begin{align*}
 F_k(a_1,a_2,\ldots,a_k) \cong F_{k-1}(a_1,a_2,\ldots,a_{k-1})\cdot \log n.
 \end{align*}
 Next, if $a_k>1$, we have
 \begin{align*}
 F_k(a_1,a_2,\ldots,a_k) \cong F_{k-1}(a_1,a_2,\ldots,a_{k-1}+a_k-1).
 \end{align*}
 Using induction, we complete the proof.
\end{proof} 

\section{Proof of Theorem~\ref{thm:variance_order}}\label{sec:fluct_order}
As we mentioned before, the proof of Theorem~\ref{thm:variance_order} proceeds by establishing Lemmas~\ref{lem: lin term}--~\ref{lem:Tn3}. We observe that to prove Lemmas~\ref{lem: lin term}--\ref{lem:Tn3} we use the graphical computational lemma (Lemma~\ref{lem:graphic}), while the proof of Lemma~\ref{lem: centering term} uses yet another idea, where we represent a certain product as a sum of martingale differences. The latter idea is crucial in the proof of parts~\ref{thm:main_delta_zero} and~\ref{thm:main_delta_neg} Theorem~\ref{thm:main}.

\subsection{Proof of Lemma~\ref{lem:Tn3}}
 The statement follows directly from Theorem~\ref{thm:ET}.\qed

\subsection{Proof of Lemma~\ref{lem:quad_term}}
We have
\begin{align*}
\E\abs{\sum_{i<j}\theta_{ij}\sum_{k\neq i,j}\sum_{\ell_1,\ell_2=1}^m \overline{\go}_{ik}^{[\ell_1]}\overline{\go}_{jk}^{[\ell_2]}}
 & \le \sum_{\ell_1,\ell_2=1}^m \E\E_{\mvpsi}\abs{\sum_{i<j, k\neq i,j} \theta_{ij}\overline{\go}_{ik}^{[\ell_1]}\overline{\go}_{jk}^{[\ell_2]}}.
\end{align*}
Now, since for fixed $j$ and $\ell$ the random variables $\go_{i,j}^{\ell}$ are negatively correlated we get
\begin{align*}
 \E_{\mvpsi} \left(\sum_{i<j, k\neq i,j} \theta_{ij}\overline{\go}_{ik}^{[\ell_1]}\overline{\go}_{jk}^{[\ell_2]}\right)^2
 &\le \sum_{i<j, k\neq i,j} \theta_{ij}^2\E_{\mvpsi}\left(\left(\overline{\go}_{ik}^{[\ell_1]}\right)^2\right)\E_{\mvpsi}\left(\left(\overline{\go}_{jk}^{[\ell_2]}\right)^2\right)
 \le \sum_{i<j, k\neq i,j} \theta_{ij}^2\theta_{ik}\theta_{jk}
\end{align*}
and therefore
\begin{align*}
 \E\abs{\sum_{i<j}\theta_{ij}\sum_{k\neq i,j}\sum_{\ell_1,\ell_2=1}^m \overline{\go}_{ik}^{[\ell_1]}\overline{\go}_{jk}^{[\ell_2]}}
 & \le m^2 \E\sqrt{\sum_{i<j, k\neq i,j} \theta_{ij}^2\theta_{ik}\theta_{jk}}
 \le m^2\sqrt{\E\sum_{i<j, k\neq i,j} \theta_{ij}^2\theta_{ik}\theta_{jk}}.
\end{align*}

Finally,
\begin{align}
 \E\sum_{i<j\atop k\neq i,j}\theta_{ij}^2\theta_{ik}\theta_{jk}&=\frac{1}{3}\E\sum_{i<j<k} \theta_{ij}\theta_{ik}\theta_{jk}(\theta_{ij}+\theta_{ik}+\theta_{jk})\notag\\
 &\cong\E\sum_{i<j<k} \theta_{ij}\theta_{ik}\theta_{jk}(\theta_{ij}+\theta_{jk})\quad [\text{because } \theta_{ik}<\theta_{jk}]\notag\\
 &=\E\sum_{i<j<k} \theta_{ij}^2\theta_{ik}\theta_{jk}+\E\sum_{i<j<k} \theta_{ij}\theta_{ik}\theta_{jk}^2\label{eq:quad-terms}.
\end{align}
We now apply the graphical computation lemma (Lemma~\ref{lem:graphic}) for each of the terms separately, noting that they correspond to the graphs presented in Figure~\ref{fig:Graphs_in_quad-term} on the left and the right, respectively.
\begin{figure}[htb]
\begin{center}
\begin{tikzpicture}[thick, scale=0.7] 
 \node[big dot,label=below:$i$] (i) at (-8,0) {};
 \node[big dot,label=below:$j$] (j) at (-5,0) {};
 \node[big dot,label=below:$k$] (k) at (-2,0) {};
 \draw [line width=0.5mm] (i) -- (k);

 \draw [<-,red] (i) to [bend left=50](j);
 \draw [<-,red] (i) to [bend left=30](j);
 \draw [<-,red] (j) to [bend left=40](k);
 \draw [<-,red] (i) to [bend left=50](k);

 \draw (-8,1) node[above] {$\ell_i=3$};
 \draw (-8,-1) node[below] {$d_i=3$};
 
 \draw[dashed] (-5,-0.2) rectangle (-4.5,1.7);
 \draw (-5,2) node[above] {$\ell_j=2$};
 \draw (-5,-1) node[below] {$d_j=1$};
 
 \draw (-2,-1) node[below] {$d_k=0$};
 \draw (-2,1) node[above] {$\ell_k=0$};

 \node[big dot,label=below:$i$] (i1) at (2,0) {};
 \node[big dot,label=below:$j$] (j1) at (5,0) {};
 \node[big dot,label=below:$k$] (k1) at (8,0) {};
 \draw [line width=0.5mm] (i1) -- (k1);

 \draw [<-,red] (j1) to [bend left=50](k1);
 \draw [<-,red] (j1) to [bend left=30](k1);
 \draw [<-,red] (i1) to [bend left=40](j1);
 \draw [<-,red] (i1) to [bend left=50](k1);

 \draw (2,1) node[above] {$\ell_i=2$};
 \draw (2,-1) node[below] {$d_i=2$};
 
 \draw[dashed] (5,-0.2) rectangle (5.5,1.7);
 \draw (5,2) node[above] {$\ell_j=3$};
 \draw (5,-1) node[below] {$d_j=2$};
 
 \draw (8,-1) node[below] {$d_k=0$};
 \draw (8,1) node[above] {$\ell_k=0$};
 \end{tikzpicture}	
\end{center}
\caption{Graphical computation diagrams for~\eqref{eq:quad-terms}.\label{fig:Graphs_in_quad-term}}
\end{figure}

So, by Lemma~\ref{lem:graphic}, recalling that $\beta:=(m+\delta)/(2m+\delta)$, we get
\begin{align*}
 \E\sum_{i<j<k} \theta_{ij}^2\theta_{ik}\theta_{jk}\cong F_3(3-3\beta,1+\beta,2\beta)=\begin{cases}
 1 & \text{ if } \beta>\frac{1}{2}\\
 \log n & \text{ if } \beta=\frac12\\
 n^{1-2\beta} & \text{ if } \beta<\frac12,
 \end{cases}
\end{align*}
and
\begin{align*}
 \E\sum_{i<j<k} \theta_{ij}\theta_{ik}\theta_{jk}^2\cong F_3(2-2\beta,3-1\beta,3\beta)=\begin{cases}
 1 & \text{ if } \beta>\frac{1}{3} \\
 \log n & \text{ if } \beta=\frac13\\
 n^{1-3\beta} & \text{ if } \beta<\frac13
 \end{cases}.
\end{align*}

Comparing these values in respective intervals for $\beta$ yields that the former expectation always dominates the latter and concludes the proof. \qed

\subsection{Proof of Lemma~\ref{lem: lin term}}
 By similar computations to those in Lemma~\ref{lem:quad_term}, it is enough to upper bound the following expectation
\begin{align*}
 &\E\sum_{i<j}\theta_{ij}\left(\sum_{k\neq i,j}\theta_{ik}\theta_{jk}\right)^2\\
 &=\E\sum_{i<j}\theta_{ij}\left[\sum_{k=j+1}^n\theta_{ik}\theta_{jk}+\sum_{k=i+1}^{j-1}\theta_{ik}\theta_{kj}+\sum_{k=1}^{i-1}\theta_{ki}\theta_{kj}\right]^2\\
 &\le \underbrace{\E\sum_{i<j}\theta_{ij}\left[\sum_{k=j+1}^n\theta_{ik}\theta_{jk}\right]^2}_{A_1}+\underbrace{\E\sum_{i<j}\theta_{ij}\left[\sum_{k=i+1}^{j-1}\theta_{ik}\theta_{kj}\right]^2}_{A_2}+\underbrace{\E\sum_{i<j}\theta_{ij}\left[\sum_{k=1}^{i-1}\theta_{ki}\theta_{kj}\right]^2}_{A_3}.
\end{align*}

We now treat each of $A_1$, $A_2$, and $A_3$ separately using the graphical computation lemma (Lemma~\ref{lem:graphic}), noting that they correspond to the graphs presented in their respective figures. First for $A_1$ we get
\begin{align*}
 A_1&=\E\sum_{i<j}\theta_{ij}\sum_{i<j<k\le k'}\theta_{ik}\theta_{jk}\theta_{ik'}\theta_{jk'}
 \cong F_3(3-3\beta, 2-\beta,4\beta)+F_4(3-3\beta,2-\beta,2\beta,2\beta),
 \end{align*}
where
 \begin{align*}
F_3(3-3\beta, 2-\beta,4\beta) &\cong
 \begin{cases}
 1 & \text{ if } \beta>1/4\\
 \log n & \text{ if } \beta=1/4\\
 n^{1-4\beta} & \text{ if } \beta<1/4
 \end{cases}
\end{align*}
and
\begin{align*}
F_4(3-3\beta,2-\beta,2\beta,2\beta)
 &\cong\begin{cases}
 1 & \text{ if } \beta>1/2\\
 \log n & \text{ if } \beta=1/2\\
 n^{2-4\beta} & \text{ if } \beta<1/2
 \end{cases}.
\end{align*}

\begin{figure}[h!]
\begin{center}
\begin{tikzpicture}[thick, scale=0.5] 
 \node[big dot,label=below:$i$] (i3) at (-11,0) {};
 \node[big dot,label=below:$j$] (j3) at (-8,0) {};
 \node[big dot,label=below:${k=k'}$] (k3) at (-5,0) {};
 \draw [line width=0.5mm] (i3) -- (k3);
    
 \draw [<-,red] (i3) to [bend left=40](j3);
 \draw [<-,red] (j3) to [bend left=50](k3);
 \draw [<-,red] (j3) to [bend left=30](k3);
 \draw [<-,red] (i3) to [bend left=45](k3);
 \draw [<-,red] (i3) to [bend left=65](k3);
\draw (-11,1.5) node[above] {$\ell_i=3$};
 \draw (-11,-1.2) node[below] {$d_i=3$};
 
 \draw[dashed] (-8,-0.2) rectangle (-7.5,2);
 \draw (-8,2) node[above] {$\ell_j=4$};
 \draw (-8,-1.2) node[below] {$d_j=2$};
 
 \draw (-5,1.5) node[above] {$\ell_k=4$};
 \draw (-5,-1.2) node[below] {$d_k=0$};

 \node[big dot,label=below:$i$] (i) at (0,0) {};
 \node[big dot,label=below:$j$] (j) at (3,0) {};
 \node[big dot,label=below:$k$] (k) at (6,0) {};
 \node[big dot,label=below:$k'$] (k1) at (9,0) {};
 \draw [line width=0.5mm] (i) -- (k1);

 \draw [<-,red] (i) to [bend left=40](j);
 \draw [<-,red] (j) to [bend left=40](k);
 \draw [<-,red] (j) to [bend left=45](k1);
 \draw [<-,red] (i) to [bend left=40](k);
 \draw [<-,red] (i) to [bend left=40](k1);
 
 \draw (0,1) node[above] {$\ell_i=3$};
 \draw (0,-1.2) node[below] {$d_i=3$};
 
 \draw[dashed] (3,-0.2) rectangle (3.5,2);
 \draw (3,2) node[above] {$\ell_j=4$};
 \draw (3,-1.2) node[below] {$d_j=2$};

 \draw[dashed] (6,-0.2) rectangle (6.5,1.8);
 \draw (6,2) node[above] {$\ell_k=2$};
 \draw (6,-1.2) node[below] {$d_k=0$};
 
 \draw (9,-1.2) node[below] {$d_{k'}=0$};
 \draw (9,1) node[above] {$\ell_{k'}=0$};
\end{tikzpicture}	
\end{center}
\caption{Graphical computation diagrams for $A_1$.\label{fig:Graphs_in_lin-term-A1}}
\end{figure}

Term $A_2$ is computed similarly 
\begin{align*}
 A_2=\E\sum_{i<j}\theta_{ij}\sum_{i<k\le k'<j}\theta_{ik}\theta_{jk}\theta_{ik'}\theta_{jk'}=F_3(3-3\beta,2,3\beta)+F_4(3-3\beta, 1,1,3\beta),
\end{align*}
where
\begin{align*}
 F_3(3-3\beta,2,3\beta)\cong \begin{cases}
 1 & \text{ if } \beta>1/3\\
 \log n & \text{ if } \beta=1/3\\
 n^{1-3\beta} & \text{ if } \beta< 1/3
 \end{cases}
\end{align*}
and
\begin{align*}
 F_4(3-3\beta, 1,1,3\beta)&\cong \begin{cases}
 1 & \text{ if } \beta>1/3\\
 (\log n)^3 & \text{ if } \beta=1/3\\
 n^{1-3\beta}(\log n)^2 & \text{ if } \beta< 1/3.
 \end{cases}
\end{align*}

\begin{figure}[htb]
\begin{center}
\begin{tikzpicture}[thick, scale=0.5] 
  \node[big dot,label=below:$i$] (i3) at (-11,0) {};
 \node[big dot,label=below:$j$] (j3) at (-5,0) {};
 \node[big dot,label=below:${k=k'}$] (k3) at (-8,0) {};
 \draw [line width=0.5mm] (i3) -- (j3);
    
 \draw [<-,red] (i3) to [bend left=60](j3);
 \draw [<-,red] (i3) to [bend left=30](k3);
 \draw [<-,red] (i3) to [bend left=50](k3);
 \draw [<-,red] (k3) to [bend left=30](j3);
 \draw [<-,red] (k3) to [bend left=50](j3);

  \draw (-11,1.5) node[above] {$\ell_i=3$};
 \draw (-11,-1.2) node[below] {$d_i=3$};
 
 \draw[dashed] (-8,-0.2) rectangle (-7.5,2);
 \draw (-5,1.5) node[above] {$\ell_j=0$};
 \draw (-5,-1.2) node[below] {$d_j=0$};
 
 \draw (-8,2) node[above] {$\ell_k=3$};
 \draw (-8,-1.2) node[below] {$d_k=2$};

 \node[big dot,label=below:$i$] (i) at (0,0) {};
 \node[big dot,label=below:$k$] (k) at (3,0) {};
 \node[big dot,label=below:$k'$] (k1) at (6,0) {};
 \node[big dot,label=below:$j$] (j) at (9,0) {};
 \draw [line width=0.5mm] (i) -- (j);

 \draw [<-,red] (i) to [bend left=40](j);
 \draw [->,red] (j) to [bend right=40](k);
 \draw [->,red] (j) to [bend right=45](k1);
 \draw [<-,red] (i) to [bend left=40](k);
 \draw [<-,red] (i) to [bend left=40](k1);
 
 \draw (0,1) node[above] {$\ell_i=3$};
 \draw (0,-1) node[below] {$d_i=3$};
 
 \draw[dashed] (3,-0.2) rectangle (3.5,2);
 \draw (3,2) node[above] {$\ell_k=3$};
 \draw (3,-1) node[below] {$d_k=1$};

 \draw[dashed] (6,-0.2) rectangle (6.5,1.8);
 \draw (6,2) node[above] {$\ell_{k'}=3$};
 \draw (6,-1) node[below] {$d_{k'}=1$};
 
 \draw (9,-1) node[below] {$d_{j}=0$};
 \draw (9,1) node[above] {$\ell_{j}=0$};
\end{tikzpicture}	
\end{center}
\caption{Graphical computation diagrams for $A_2$.\label{fig:Graphs_in_lin-term-A2}}
\end{figure}

Finally, we compute the last term $A_3$.
\begin{align*}
 A_3&=\E\sum_{i<j}\theta_{ij}\sum_{k\le k'<i}\theta_{ik}\theta_{jk}\theta_{ik'}\theta_{jk'}\\
 &=F_3(4-4\beta,1+\beta,3\beta)+F_4(2-2\beta,2-\beta,1+\beta,3\beta),
 \end{align*}
 where
 \begin{align*}
     F_3(4-4\beta,1+\beta,3\beta)\cong \begin{cases}
 1 & \text{ if } \beta>1/3\\
 \log n & \text{ if } \beta=1/3\\
 n^{1-3\beta} & \text{ if } \beta< 1/3
 \end{cases}
 \end{align*} 
 and
 \begin{align*}
 F_4(2-2\beta,2-\beta,1+\beta,3\beta)\cong \begin{cases}
  1 & \text{ if } \beta>1/3\\
 \log n & \text{ if } \beta=1/3\\
 n^{1-3\beta} & \text{ if } \beta< 1/3
 \end{cases}.
\end{align*}

\begin{figure}[htb]
\begin{center}
\begin{tikzpicture}[thick, scale=0.5] 
\node[big dot,label=below:$i$] (i3) at (-8,0) {};
 \node[big dot,label=below:$j$] (j3) at (-5,0) {};
 \node[big dot,label=below:${k=k'}$] (k3) at (-11,0) {};
 \draw [line width=0.5mm] (k3) -- (j3);
    
 \draw [<-,red] (k3) to [bend left=30](i3);
 \draw [<-,red] (k3) to [bend left=50](i3);
 \draw [<-,red] (i3) to [bend left=45](j3);
 \draw [<-,red] (k3) to [bend left=50](j3);
 \draw [<-,red] (k3) to [bend left=65](j3);

  \draw (-8,2) node[above] {$\ell_i=3$};
 \draw (-8,-1.2) node[below] {$d_i=1$};
 
 \draw[dashed] (-8,-0.2) rectangle (-7.5,2);
 \draw (-5,1.5) node[above] {$\ell_j=0$};
 \draw (-5,-1.2) node[below] {$d_j=0$};
 
 \draw (-11,1.5) node[above] {$\ell_k=4$};
 \draw (-11,-1.2) node[below] {$d_k=4$};

 \node[big dot,label=below:$k$] (k) at (0,0) {};
 \node[big dot,label=below:$k'$] (k1) at (3,0) {};
 \node[big dot,label=below:$i$] (i) at (6,0) {};
 \node[big dot,label=below:$j$] (j) at (9,0) {};
 \draw [line width=0.5mm] (k) -- (j);

 \draw [<-,red] (i) to [bend left=40](j);
 \draw [->,red] (j) to [bend right=45](k);
 \draw [->,red] (j) to [bend right=45](k1);
 \draw [->,red] (i) to [bend right=40](k);
 \draw [->,red] (i) to [bend right=40](k1);
 
 \draw (0,1) node[above] {$\ell_k=2$};
 \draw (0,-1) node[below] {$d_k=2$};
 
 \draw[dashed] (3,-0.2) rectangle (3.5,2.2);
 \draw (3,2) node[above] {$\ell_{k'}=4$};
 \draw (3,-1) node[below] {$d_{k'}=2$};

 \draw[dashed] (6,-0.2) rectangle (6.5,1.9);
 \draw (6,2) node[above] {$\ell_{i}=3$};
 \draw (6,-1) node[below] {$d_{i}=1$};
 
 \draw (9,-1) node[below] {$d_{j}=0$};
 \draw (9,1) node[above] {$\ell_{j}=0$};
\end{tikzpicture}	
\end{center}
\caption{Graphical computation diagrams for $A_3$.\label{fig:Graphs_in_lin-term-A3}}
\end{figure}

Comparing the orders of the terms $A_1$, $A_2$, and $A_3$ and recalling that $\beta:=(m+\delta)/(2m+\delta)$ concludes the proof.\qed

\subsection{Proof of Lemma~\ref{lem: centering term}}
Recall definitions of the quantity $\mu_{i,j,k}$ and the normalized random variables $X_t,Y_t,U_t$ from~\eqref{eq:mu_ijk} and~\eqref{eq:XYU}, respectively.
Rewriting the product of $ Y^{[j]}_t$ as the sum of martingale differences we get
\begin{align*}
    \prod_{t=i+1}^{k-1}Y^{[j]}_t=1+\sum_{\ell=i+1}^{k-1} Y^{[j]}_{i+1}\cdots Y^{[j]}_{\ell-1} \left( Y^{[j]}_{\ell}-1\right).
\end{align*}
 
Using the definition of $\theta_{ij}$ we can now rewrite term~\eqref{term:central0}, as follows
\begin{align}
    \Delta_n
    &=\sum_{i<j<k}\theta_{ij}\theta_{ik}\theta_{jk}\notag\\
    &=\sum_{i=1}^{n-2} \psi_i^2\cdot \sum_{k=i+1}^{n} \left[\left(\prod_{t=i+1}^{k-1}(1-\psi_t)^2\right)\cdot \left(\sum_{j=i+1}^{k-1}\frac{\psi_j}{1-\psi_j}\right)\right]\notag\\
    &=\sum_{i=1}^{n-2} \psi_i^2\sum_{i<j<k\le n}\left[ \left(\prod_{t=i+1}^{j-1}(1-\psi_t)^2\right)\cdot \psi_j(1-\psi_j)\left(\prod_{t=j+1}^{k-1}(1-\psi_t)^2\right)\right]\notag\\
    &=\sum_{i=1}^{n-2} \sum_{i<j<k\le n}\mu_{i,j,k} X_i\prod_{t=i+1}^{k-1}Y^{[j]}_t.\notag
\end{align}
Rewriting $\prod_{t=i+1}^{k-1}Y^{[j]}_t$ as a telescoping sum of martingale differences we get
\begin{align}
    \Delta_n&=\sum_{1\le i<j<k\le n} \mu_{i,j,k} X_i\left(1+\sum_{\ell=i+1}^{k-1} Y^{[j]}_{i+1}\cdots Y^{[j]}_{\ell-1} \left( Y^{[j]}_{\ell}-1\right)\right)\notag\\
    &=\sum_{1\le i<j<k\le n} \mu_{i,j,k}X_i \label{eq:central0-2_const}\\
    &\quad+\sum_{1\le i<j<k\le n} \mu_{i,j,k}X_i\left(\sum_{\ell=i+1}^{k-1} Y^{[j]}_{i+1}\cdots Y^{[j]}_{\ell-1} \left( Y^{[j]}_{\ell}-1\right)\right).\label{eq:central0-2}
\end{align}
Now assume that $\delta> 0$ ($\beta>1/2$). Recalling that $\sup_{i\ge 1}\var(X_i)\cong1$ and $\mu_{i,\cdot,\cdot}^{(n)} \cong i^{-1}$ for $i\ge 1$ from Lemma~\ref{lem:mu_ijk}, we get
\begin{align}
    \var\left(\eqref{eq:central0-2_const}\right)
    =\sum_{i=1}^n \var(X_i)\cdot  \left\vert\mu_{i,\cdot,\cdot}^{(n)}\right\vert^2<\infty. 
\end{align}
and
\begin{align*}
 \var\left(\eqref{eq:central0-2}\right)
 &\cong\sum_{{i<j<k\atop i'<j'<k'}\atop i<i'} \frac{1}{(ii')^{2-2\beta}(kk')^{2\beta}jj'}\cdot \E\left[ \prod_{t=i+1}^{i'}Y_t^{[j]}\cdot\sum_{\ell=i'}^{k\wedge k'}\left(\prod_{t=i'+1}^{\ell-1} Y_{t}^{[j]}Y_{t}^{[j']}\right)\cdot \left(Y^{[j]}_{\ell}-1\right)\left(Y^{[j']}_{\ell}-1\right)\right]\\
 &\cong\sum_{{i<j<k\atop i'<j'<k'}\atop i<i'} \frac{1}{(ii')^{2-2\beta}(kk')^{2\beta}jj'}\cdot \frac{1}{i'}\\
 &\cong\sum_{i<i' \atop{i<j,\ i'<j'}} \frac{1}{i^{2-2\beta}(i')^{3-2\beta}(jj')^{2\beta}}
 \cong\sum_{i<i'} \frac{1}{i^{2-2\beta}(i')^{3-2\beta}(ii')^{2\beta}}<\infty.
\end{align*}

The case when $\delta=0$ ($\beta=1/2$) is treated similarly,
\begin{align*}
\var\left(\eqref{eq:central0-2_const}\right)
 &=\sum_{i=1}^n \var(X_i)\cdot \mu_{i,\cdot,\cdot}^2=\sum_{i=1}^n \frac{(\log n)^4}{i^2}\cong (\log n)^4.
\end{align*}
and
\begin{align*}
 \var\left(\eqref{eq:central0-2}\right)&\cong \sum_{{i<j<k\atop i'<j'<k'}\atop i<i'} \frac{1}{ii'^2jj'kk'}\cong (\log n)^4.
\end{align*}

Finally, again by a similar computation when $\delta\in(-m,0)$ ($\beta<1/2$) we get
\begin{align*}
 \var\left(\eqref{eq:central0-2_const}\right)
 &=\sum_{i=1}^n \var(X_i)\cdot \mu_{i,\cdot,\cdot}^2
 \cong \sum_{i=1}^n 1\cdot n^{2-4\beta}(\log n)^2 i^{4\beta-4}\cong n^{2-4\beta}(\log n)^2.
\end{align*}
and
\begin{align*}
 \var\left(\eqref{eq:central0-2}\right)&\cong \sum_{{i<j<k\atop i'<j'<k'}\atop i<i'} \frac{1}{(ii')^{2-4\beta}(kk')^{2\beta}jj'}\cdot \frac{1}{i'}
 \cong\sum_{i<i' \atop{i<j,\ i'<j'}} \frac{n^{2-4\beta}}{i^{2-2\beta}(i')^{3-2\beta}(jj')^{1}}\cong n^{2-4\beta}(\log n)^2.
\end{align*}
Combining, we have the result.\qed

\section{Distributional convergence for positive $\delta$}\label{sec:clt}
The goal of this section is to prove part~\ref{thm:main_delta_pos} of Theorem~\ref{thm:main}.
First, we recall Stein's method for exchangeable pairs.
This method is typically applicable to systems in which small perturbations do not significantly change the distribution. In particular, we do not know of any application of it to a non-static random graph. The following presentation is a modification of the classical results due to Stein~\cite{Stein72, Stein86} as in~\cite[Theorem 1.1]{DT_CCLT}.
\begin{thm}\label{th:st_exch}
	Let $(W, W')$ be an exchangeable pair of random variables defined on the same probability space. Suppose $\E W=0$, $\E W^2=1$, $\E \abs{W}^3<\infty $, and 
	\begin{align*}
		\E (W'-W\mid W)=-\lambda(W+R_1) \text{ and } \E \left((W'-W)^2\mid W\right)=2\lambda(1+R_2)
	\end{align*}
	almost surely for some constant $\lambda\in(0, 1)$ and random variables $R_i=R_i(W)$ for $i=1, 2$. Then
	\begin{align*}
		\dwas(W, Z) \le \E|R_1|+\sqrt{\frac{2}{\pi}}\E |R_2|+\frac{1}{3\lambda}\E|W'-W|^3,
	\end{align*}
	where $Z$ is a standard normal random variable and $\dwas(W, Z)$ denotes the Wasserstein-$1$ distance.
\end{thm}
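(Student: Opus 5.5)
The plan is the standard proof of Stein's exchangeable-pair bound via the Stein equation for the Wasserstein distance. Fix $h$ with $\mathrm{Lip}(h)\le 1$ and let $f=f_h$ solve
\[
f'(w)-wf(w)=h(w)-\E h(Z).
\]
By the classical bounds (e.g.\ from~\cite{ChenGoldstein11} or~\cite{Ross11}) this solution satisfies
\[
\norm{f}_\infty\le 2,\qquad \norm{f'}_\infty\le \sqrt{2/\pi},\qquad \norm{f''}_\infty\le 2.
\]
It then suffices to bound $\abs{\E\left(f'(W)-Wf(W)\right)}$ uniformly over such $f$.

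The first step uses exchangeability. Since $(W,W')\equald(W',W)$, the antisymmetric function $(w,w')\mapsto (w'-w)(f(w')+f(w))$ has zero mean, so
\[
0=\E\left[(W'-W)\left(f(W')+f(W)\right)\right]=\E\left[(W'-W)\left(f(W')-f(W)\right)\right]+2\E\left[(W'-W)f(W)\right].
\]
Conditioning the last term on $W$ and using the first hypothesis gives
\[
2\E\left[(W'-W)f(W)\right]=-2\lambda\E\left[(W+R_1)f(W)\right].
\]
Therefore
\[
\E\left[Wf(W)\right]=\frac{1}{2\lambda}\E\left[(W'-W)\left(f(W')-f(W)\right)\right]-\E\left[R_1f(W)\right].
\]

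The second step is a Taylor expansion:
\[
f(W')-f(W)=(W'-W)f'(W)+\rho,\qquad \abs{\rho}\le \tfrac12\norm{f''}_\infty (W'-W)^2.
\]
Substituting, and conditioning on $W$ with the second hypothesis,
\[
\frac{1}{2\lambda}\E\left[(W'-W)^2f'(W)\right]=\E\left[(1+R_2)f'(W)\right].
\]
Collecting terms,
\[
\E\left[f'(W)-Wf(W)\right]=-\E\left[R_2f'(W)\right]+\E\left[R_1f(W)\right]-\frac{1}{2\lambda}\E\left[(W'-W)\rho\right].
\]
Bounding each term with the derivative bounds gives
\[
\abs{\E\left[f'(W)-Wf(W)\right]}\le \norm{f}_\infty\E\abs{R_1}+\sqrt{2/\pi}\,\E\abs{R_2}+\frac{\norm{f''}_\infty}{4\lambda}\E\abs{W'-W}^3.
\]

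The main obstacle is matching the constants in the statement exactly: coefficient $1$ in front of $\E\abs{R_1}$ and $1/(3\lambda)$ in front of the cubic term. With $\norm{f}_\infty\le 2$ and $\norm{f''}_\infty\le 2$ one only gets $2\E\abs{R_1}$ and $\frac{1}{2\lambda}\E\abs{W'-W}^3$. To sharpen these I would do two things.

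For the $R_1$ term, I would use the Wasserstein-specific bound $\norm{f}_\infty\le 1$ for Lipschitz $h$ when $f$ is the bounded solution.

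For the cubic term, I would replace the crude Taylor remainder by the integral form
\[
(W'-W)\left(f(W')-f(W)\right)-(W'-W)^2f'(W)=(W'-W)^2\int_0^1\left(f'(W+u(W'-W))-f'(W)\right)du .
\]
Combining this with $\mathrm{Lip}(f')\le 2$ yields a factor $\int_0^1 u\,du$ in place of $\tfrac12$. Exchangeability can then be used to symmetrize the remainder around the midpoint, as in~\cite{DT_CCLT}, which produces the $1/3$ (since $\int_0^1 u^2\,du=1/3$).

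If the exact constants cannot be recovered this way, I would cite~\cite[Theorem 1.1]{DT_CCLT} directly, since the statement is presented as a restatement of that result. For the paper's application, any absolute constants suffice.
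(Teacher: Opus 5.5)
The paper gives no proof of this statement; it quotes it from \cite{DT_CCLT}. Your skeleton is the standard argument behind that result: the antisymmetry identity, conditioning on $W$, a Taylor expansion, and the bounds on the Stein solution. Up to the last step it is correct. With $\norm{f}_\infty\le 1$ and $\mathrm{Lip}(f')\le 2$ it yields $\E\abs{R_1}+\sqrt{2/\pi}\,\E\abs{R_2}+\frac{1}{2\lambda}\E\abs{W'-W}^3$.

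The gap is the step meant to turn $\frac{1}{2\lambda}$ into $\frac{1}{3\lambda}$. Passing to the integral remainder and using $\mathrm{Lip}(f')\le 2$ gives $\abs{W'-W}^2\cdot 2\int_0^1u\,du=\abs{W'-W}^2$. That is exactly the Lagrange bound you already had, since $\int_0^1u\,du=\tfrac12$. Nothing in your sketch produces a kernel $u^2$, so the appeal to $\int_0^1u^2\,du=\tfrac13$ has no computation behind it. As written, the stated constant is not reached, and falling back on citing \cite{DT_CCLT} is not an argument.

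Your symmetrization idea does close the gap if you carry it out via the trapezoidal rule instead of a one-sided expansion. Set $\Delta=W'-W$. Then
\[
f(W')-f(W)=\Delta\,\frac{f'(W)+f'(W')}{2}+E,\qquad E=\int_W^{W'}f''(t)\Bigl(\frac{W+W'}{2}-t\Bigr)dt,\qquad \abs{E}\le \frac{\mathrm{Lip}(f')}{4}\,\Delta^2\le \frac{\Delta^2}{2}.
\]
Exchangeability gives $\E[\Delta^2f'(W')]=\E[\Delta^2f'(W)]$, so $\E[\Delta(f(W')-f(W))]=\E[\Delta^2f'(W)]+\E[\Delta E]$. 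Substituting this into your identity for $\E[Wf(W)]$ gives the cubic term $\frac{1}{4\lambda}\E\abs{W'-W}^3$, which is better than the claimed $\frac{1}{3\lambda}$.

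You should also justify, not just assert, the bound $\norm{f}_\infty\le 1$ that gives coefficient $1$ in front of $\E\abs{R_1}$. It follows from the Mehler representation of the bounded solution:
\[
f(w)=-\int_0^\infty e^{-t}\,\E h'\bigl(e^{-t}w+\sqrt{1-e^{-2t}}\,Z\bigr)\,dt,
\]
which gives $\abs{f(w)}\le\int_0^\infty e^{-t}\,dt=1$ for $1$-Lipschitz $h$.
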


We are now ready to present 

\begin{proof}[Proof of Theorem~\ref{thm:main} part~\ref{thm:main_delta_pos}]

Recall the centered-edge decomposition of $T_n$ as in~\eqref{term:central0}--\eqref{term:central3}, by Theorem~\ref{thm:variance_order} we know that when $\delta>0$ dominating term is of fluctuation order $\sqrt{\log n}$ is given by $T_{n}^{(3)}$, as in~\eqref{term:central3}, while all other terms have constant order fluctuation. Hence, setting 
\begin{align*}
W_n=T_{n}^{(3)}=\sum_{1\le i<j<k\le n}\sum_{\ell_1,\ell_2,\ell_3=1}^m\overline{\go}_{ij}^{[\ell_1]}\overline{\go}_{ik}^{[\ell_2]}\overline{\go}_{jk}^{[\ell_3]},
\end{align*}
we will first show the conditional version of the result for $(W_n\mid \mvpsi_n)$.
First the conditional variance of $W_n$ is given by
\begin{align*}
    \gs^2_n(\mvpsi):=\var_{\mvpsi} (W_n)&=m^2(m-1)\sum_{1\le i<j<k\le n}\theta_{ij}\theta_{jk}\theta_{ik}(1-\theta_{ij})(1-\theta_{ik})(1-\theta_{jk}).
\end{align*}
At the end of the proof, we will remove the conditioning using the fact that
\begin{align}\label{gs_vs_nu}
    \E \abs{\Delta_{n}-\gs^2_n(\mvpsi)}\cong 1,
\end{align}
given to us by Lemma~\ref{cor:cycles_order}.\ref{cor:cycles_order_part2}.
 
The proof proceeds by the exchangeable pair approach of Stein's method as in Theorem~\ref{th:st_exch}. Recall that by the construction of the P\'olya urn graph in Subsection~\ref{sec:model} the random variables $\{U_{j,\ell}\}_{j\in[n],\ell\in[m]}$ are independent conditioned on the environment $\mvpsi_n$. Hence, to create an exchangeable pair, it is natural to resample one of them chosen uniformly at random. Let $J\sim\mathrm{Uniform[n]}$ be a uniformly chosen vertex and $L\sim\mathrm{Uniform[m]}$ be a uniformly chosen outgoing edge from $J$. Recall that 
    \[
    \go_{ij}^{[\ell]}=\ind_{U_{j,\ell}\in I_i^{(n)}}=\ind_{\{j\to i\}}(\ell)
    \] 
is the indicator function that $j$-th vertex connected to $i$-th vertex with the $\ell$-th edge. Given $\mvpsi_n$, let 
\begin{align*}
    &\{U'_{j,\ell}\}_{j\in[n],\ell\in[m]}\text{  be independent copies of } \{U_{j,\ell}\}_{j\in[n],\ell\in[m]},\\
    &(\go_{ij}^{[\ell]})'=\ind_{U'_{j,\ell}\in I_i^{(n)}}, \text{ and}\\
    & W_n' \text{ the version of } W_n,\text{ where } U_{J,L} \text{ is replaced by its independent copy } U_{J,L}'.
\end{align*}
Finally, let $\cT_{J, L}$ denote the set of triangles that contain the $L$-th outgoing edge from the $J$-th vertex, and $\cF_n$ be the natural filtration of the original model. We have
 \begin{align}
 W_n'-W_n
 =\sum_{\cT_{J,L}}\left((\overline{\go}_{iJ}^{[L]})'-\overline{\go}_{iJ}^{[L]}\right)\cdot \overline{\go}_{ik}^{[\ell_1]}\overline{\go}_{Jk}^{[\ell_2]}.\label{eq:perturb}
 \end{align} 
 In particular, the linearity condition (with $R_1=0$) holds from Theorem~\ref{th:st_exch} 
 \begin{align*}
 \E_{\mvpsi} (W_n'-W_n\mid \cF_n)
 &=-\frac{1}{nm}\sum_{j,\ell}\left(\sum_{\cT_{j,\ell}}\left(\E\left((\overline{\go}_{ij}^{[\ell]})'\mid\cF_n\right)-\overline{\go}_{ij}^{[\ell]}\right)\overline{\go}_{ik}^{[\ell_1]}\overline{\go}_{jk}^{[\ell_2]}\right)=-\frac{3}{nm}W_n,
 \end{align*} 
where we used that $\{(\overline{\go}_{ij}^{[\ell]})'\}$ are independent copies of $\{\overline{\go}_{ij}^{[\ell]}\}$.
 
Denoting $\lambda:=3/nm$, next we show that $\E_{\mvpsi} ((W_n'-W_n)^2\mid \cF_n)\cong 2\lambda$. Since 
\begin{align*}
    \E_{\mvpsi}\left(\left((\overline{\go}_{ij}^{[\ell]})'-\overline{\go}_{ij}^{[\ell]}\right)^2 \mid \cF_n\right)
    &=(1-2\theta_{ij})\cdot\overline{\go}_{ij}^{[\ell]}+2\theta_{ij}(1-\theta_{ij})\\
    \text{ and }\qquad\left(\overline{\go}_{ik}^{[\ell]}\right)^2 &= (1-\theta_{ik})\cdot \overline{\go}_{ik}^{[\ell]} + \theta_{ik}(1-\theta_{ik})
\end{align*} 
using equation~\eqref{eq:perturb}, we get
\begin{align*}
 \E_{\mvpsi} \left((W_n'-W_n)^2\mid \cF_n\right)
&=2\lambda \left(\gs_n(\mvpsi)^2+\overline{r}_1(\mvpsi)\right).
\end{align*}
We now claim that $\E\abs{\overline{r}_1(\mvpsi)}=O(1)$. Indeed, using representation of $W'-W$ from~\eqref{eq:perturb}, we can explicitly write out $\E_{\mvpsi} ((W_n'-W_n)^2\mid \cF_n)$ and observe that all of the terms besides 
$$\theta_{ij}\theta_{jk}\theta_{ik}(1-\theta_{ij})(1-\theta_{ik})(1-\theta_{jk})$$ 
corresponds to a graph that is the intersection of two triangles. 
Using the same Cauchy--Schwarz trick as in the proof of Lemma~\ref{lem:quad_term} to go from expressions involving $\overline{\go}$ to the ones purely in terms of $\theta$'s, we now may apply Lemma~\ref{cor:cycles_order}.\ref{cor:cycles_order_part2} and conclude that such terms are of constant order.

 Finally, to upper bound the cubic error term from Theorem~\ref{th:st_exch}, we claim something stronger, namely that almost surely (given $\mvpsi$)
 \begin{align*}
 \E_{\mvpsi} \abs{W_n'-W_n}^3\le \lambda(\gs^2_n(\mvpsi) +r_2(\mvpsi)), 
 \end{align*}
 where $r_2(\mvpsi)$ is a random variable with a universally bounded mean.
 
 To show that, first observe that since $\{\go_{i,j}\}_{i,j\in [n]}$ are $\{0,1\}$-valued random variables we have that 
 $\E_{\mvpsi}\abs{\overline{\go}_{i,j}}^3\le \theta_{ij}(1-\theta_{ij})$.
 Therefore, we derive
 \begin{align*}
 \lambda^{-1}(\log n)^{\frac{3}{2}}\E_{\mvpsi} \abs{W_n'-W_n}^3&\lesssim \sum_{j,\ell}\E_{\mvpsi}\left(\sum_{\cT_{j,\ell}}\abs{\overline{\go}^{[\ell]}_{ij}}\abs{\overline{\go}^{[\ell_1]}_{jk}}\abs{\overline{\go}^{[\ell_2]}_{ik}}\right)^3\\
 &\lesssim \E_{\mvpsi}\sum_{\cT}\abs{\overline{\go}^{[\ell]}_{ij}}\abs{\overline{\go}^{[\ell_1]}_{jk}}\abs{\overline{\go}^{[\ell_2]}_{ik}}+\sum_{\cH}\prod_{(x,y)\in H}\abs{\overline{\go}^{[\ell]}_{x,y}}\\
 &\lesssim \sum_{i<j<k}\theta_{ij}\theta_{ik}\theta_{jk}+\sum_{\cH}\prod_{(x,y)\in H}\theta_{x,y}\\
 &\lesssim \gs^2_n(\mvpsi)+r_2(\mvpsi),
 \end{align*}
 where $\cH$ is the set of all multigraphs that are the union of $3$ triangles, all of which have at least one common vertex. Therefore, by Lemma~\ref{cor:cycles_order}, $\E r_2(\mvpsi)$ is bounded by a constant that depends only on parameters $m$ and $\delta$.
 
Applying Theorem~\ref{th:st_exch} we get the Gaussian approximation for $W_n$ conditioned on $\mvpsi$
\begin{align}\label{eq:CCLT}
 \E\dwas\left(\left(\frac{W_n}{\sqrt{\log n}} \;\bigl|\; \mvpsi \right), \frac{\gs_n(\mvpsi)}{\sqrt{\log n}}\cdot Z\right)\lesssim \frac{1}{\sqrt{\log n}}.
\end{align}

Finally, recall that, $\overline{T}_n=(\nu_n(\mvpsi)^2-\E T_n) + \sum_{i=1}^3 T_{n,i}$ where 
 \[
 \nu_n(\mvpsi)^2:=m^2(m-1)\cdot \Delta_n.
 \]
Therefore, setting $\gs^2:=\lim_{n\to\infty}\E \gs^2_n(\mvpsi)/\log n$ we derive
 \begin{align*}
 &\dwas\left(\frac{\overline{T}_n}{\sqrt{\log n}}, \gs\cdot Z\right)\le \frac{\E \abs{\nu_n(\mvpsi)^2-\E T_n} + \E\abs{T_{n}^{(1)}}+\E\abs{T_{n}^{(2)}}}{\sqrt{\log n}}
 + \dwas\left(\frac{W_n}{\sqrt{\log n}}, \gs\cdot Z\right).
 \end{align*}
Now, for $\delta>0$, by Theorem~\ref{thm:variance_order} the numerator $\E \abs{\nu_n(\mvpsi)^2-\E T_n} + \E\abs{T_{n}^{(1)}}+\E\abs{T_{n}^{(2)}}$ is bounded by a constant. Therefore,
\begin{align*}
 \dwas\left(\frac{W_n}{\sqrt{\log n}}, \gs\cdot Z\right)
 &\lesssim \E\dwas\left(\left(\frac{W_n}{\sqrt{\log n}} \;\bigl|\; \mvpsi \right), \frac{\gs_n(\mvpsi)}{\log n}\cdot Z\right) + \E\dwas\left(\frac{\gs_n(\mvpsi)}{\sqrt{\log n}}\cdot Z, \gs\cdot Z\right).
\end{align*}
Moreover, one can easily check that $\dwas\left(\gs'\cdot Z, \gs\cdot Z\right) \le \sqrt{\pi/2}\cdot \abs{\gs-\gs'} $. Finally, we use Lemma~\ref{lem: centering term} to get that
\begin{align*}
 \E\dwas\left(\frac{\gs_n(\mvpsi)}{\sqrt{\log n}}\cdot Z, \gs\cdot Z\right) 
 &\le \sqrt{\frac{\pi}{2}}\cdot\E \abs{\frac{\gs_n(\mvpsi)}{\sqrt{\log n}}-\nu_n(\mvpsi)} \\
 &\le \frac{1}{\gs}\sqrt{\frac{\pi}{2}}\cdot\E \abs{\frac{\gs_n(\mvpsi)^2}{\log n}-\nu^2_n(\mvpsi)}
 \le \sqrt{\pi/2\cdot \var(\gs_n^2(\mvpsi)/\log n)}
 \lesssim \frac{1}{\sqrt{\log n}},
\end{align*}
where we applied~\eqref{gs_vs_nu}. This together with~\eqref{eq:CCLT} completes the proof.
\end{proof}

\section{Distributional convergence for non-positive $\delta$}\label{sec:delta_neg}
The goal of this section is to prove parts~\ref{thm:main_delta_zero} and~\ref{thm:main_delta_neg} of Theorem~\ref{thm:main}. Recall that
$
\beta=\frac{m+\delta}{2m+\delta}
$
and $\E(1-\psi_n)^2 = \Delta f_{n+1}/\Delta f_n$ for $n\ge 2$, where 
\begin{align*}
f_n=1+\sum_{t=3}^{n} \prod_{i=2}^{t-1} \E(1-\psi_i)^2, \quad n\ge 2
\end{align*}
and $f_1=0$. 
Also recall 
\begin{align*}
\tilde{s}_n&:= f_n\sum_{j=2}^{n-1} \frac{\E \psi_j(1-\psi_j)}{\E(1-\psi_j)^2}
- \sum_{j=2}^{n-1} \frac{\E \psi_j(1-\psi_j)}{\E(1-\psi_j)^2}\, f_j \notag\\
&= \bigl(1 +O(1/\log n)\bigr)\cdot
\begin{cases}
 \dfrac{\beta}{1-2\beta}n^{1-2\beta}\log n, & \text{if } 0<\beta<1/2,\\[2mm]
 \dfrac14(\log n)^2, & \text{if } \beta=1/2.
\end{cases}
\end{align*}
In particular, we have $\abs{\tilde{s}_n/s_{n}-1}\lesssim 1/\log n$.
We start by noting the following corollary of Lemma~\ref{lem:mu_ijk}.
\begin{cor}\label{lem:mu_ijk2}
For $m\ge 2$ and $\delta\in(-m,0]$, and for all $\ell\le n$,
\begin{align*}
\sum_{j<k\le n,\, j\le \ell} \mu_{i,j,k} \cong
\begin{cases}
\dfrac{\log \ell}{i^{2-2\beta}}\, n^{1-2\beta}, & \text{ if } \beta\in (0,1/2),\\[2mm]
\dfrac{\log \ell}{i^{2-2\beta}}\, \log n, & \text{ if } \beta=1/2,
\end{cases}
\end{align*}
and, in particular,
\begin{align*}
\frac{1}{\tilde{s}_n}\sum_{i=1}^{n-2}\mu_{i,\cdot,\cdot}^{(n)}\cong\begin{cases}
\log n & \text{ if } \beta=1/2,\\
1 & \text{ if } \beta\in (0,1/2).
\end{cases}
\end{align*}
\end{cor}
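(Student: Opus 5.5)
We start from the product formula for $\mu_{i,j,k}$ recorded before Lemma~\ref{lem:mu_ijk},
\[
\mu_{i,j,k}=b_i\cdot \Delta f_k\cdot \frac{\E \psi_j(1-\psi_j)}{\E(1-\psi_j)^2},\qquad b_i=\frac{\E\psi_i^2}{\Delta f_{i+1}}\cong i^{2\beta-2}.
\]
Summing over $k$ from $j+1$ to $n$ gives $\sum_{k}\Delta f_k=f_n-f_j$. This yields
\[
\sum_{j<k\le n,\,j\le \ell}\mu_{i,j,k}=b_i\sum_{j=i+1}^{\ell\wedge(n-1)}\frac{\E \psi_j(1-\psi_j)}{\E(1-\psi_j)^2}\,(f_n-f_j).
\]
We use two estimates already available. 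The first is $\E\psi_j(1-\psi_j)/\E(1-\psi_j)^2=(\beta/j)(1+O(1/j))\cong j^{-1}$, which follows from Lemma~\ref{lem:psimom}. The second is $\Delta f_t\cong t^{-2\beta}$. The latter gives $f_n\cong n^{1-2\beta}$ when $\beta<1/2$ and $f_n\cong\log n$ when $\beta=1/2$.

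The upper bound is then immediate. Bound $f_n-f_j\le f_n$ and $\sum_{j\le \ell}j^{-1}\lesssim \log \ell$ to get the right-hand side times $b_i$. This is exactly the argument in the proof of Lemma~\ref{lem:mu_ijk}.

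For the lower bound we must show that $f_n-f_j\gtrsim f_n$ on a substantial range of $j$.
\begin{itemize}
\item When $\beta<1/2$: $f_n-f_j\ge f_n-f_{n/2}\gtrsim n^{1-2\beta}$ for all $j\le n/2$, since $\sum_{n/2<t\le n}t^{-2\beta}\cong n^{1-2\beta}$.
\item When $\beta=1/2$: $f_n-f_j\cong\log(n/j)\ge\tfrac12\log n$ for $j\le\sqrt n$.
\end{itemize}
So restricting the $j$-sum to $i<j\le \ell\wedge\sqrt n$ gives a lower bound of order $f_n\log(\ell\wedge\sqrt n)/\!\log$-comparable terms.

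The main obstacle is the bookkeeping at the edges of the range. The $\cong$ should be read with constants uniform in $i$ and $\ell$ and interpreted as $\log\ell$ meaning $\log(\ell/i)\vee 1$-type quantities; the statement as written implicitly takes $\ell$ well beyond $i$ (otherwise the sum is empty). For the regime $\ell\le\sqrt n$ the argument above gives $\log\ell$ directly. For $\ell>\sqrt n$ we have $\log(\ell\wedge\sqrt n)\ge\tfrac12\log\ell$, so the bound again matches up to a factor 2. When $\beta<1/2$ we use $j\le \ell\wedge n/2$ similarly. For $\ell$ close to $n$, the piece $n/2<j<n$ contributes at most $f_n\cdot O(1)$, which does not affect the order.

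For the second display, we take $\ell=n$. Then $\mu^{(n)}_{i,\cdot,\cdot}\cong b_i\,\tilde s_n$ uniformly in $i$, consistent with Lemma~\ref{lem:mu_ijk}, because $\tilde s_n\cong f_n\log n$ by~\eqref{eq:s_n}. (Strictly, the lower bound should be taken with $j\ge i$, so for $i$ close to $n$ one uses the upper bound only; this is harmless since those terms are negligible.) Summing over $i$:
\begin{itemize}
\item When $\beta<1/2$: $\sum_i b_i\cong\sum_i i^{2\beta-2}\cong 1$, since $2-2\beta>1$. The $i=1$ term already gives the lower bound.
\item When $\beta=1/2$: $b_i\cong i^{-1}$. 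Here the uniform version is needed, namely $\mu^{(n)}_{i,\cdot,\cdot}\gtrsim b_i f_n\log(n/i)$ for $i\le\sqrt n$. This gives $\sum_{i\le\sqrt n} i^{-1}\,\log n\cdot\log n\cong(\log n)^3=\tilde s_n\log n$. The upper bound $\sum_i i^{-1}\tilde s_n\cong\tilde s_n\log n$ then matches.
\end{itemize}
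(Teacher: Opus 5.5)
Your proof is correct and follows the paper's argument. You sum over $k$ to reduce to $b_i\sum_{j=i+1}^{\ell}\frac{\E\psi_j(1-\psi_j)}{\E(1-\psi_j)^2}(f_n-f_j)$, use the $j^{-1}$ weights, $b_i\cong i^{2\beta-2}$ and the growth of $f_n$, and then sum over $i$ with \eqref{eq:s_n}. The only difference is that you write out the lower bounds (via $f_n-f_j\gtrsim f_n$ for $j\le n/2$, resp.\ $j\le\sqrt n$), which the paper leaves to ``the same estimates as in Lemma~\ref{lem:mu_ijk}''.

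Your caveat is also right. Read literally with constants uniform in $i$, the lower bound in the first display fails unless $i$ is small relative to $\ell$, since the true order involves $\log(\ell/i)$ rather than $\log\ell$. Likewise, your passing remark that $\mu^{(n)}_{i,\cdot,\cdot}\cong b_i\tilde s_n$ uniformly in $i$ is false for $i$ comparable to $n$. You never actually use it, and the later applications (e.g.\ Lemma~\ref{lem:ri2}) only need the upper bounds.
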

\begin{proof}
For fixed $i$,
\begin{align*}
\frac{\Delta f_{i+1}}{\E \psi_i^2}\sum_{i<j<k\le n\atop j\le \ell}\mu_{i,j,k}
=\sum_{j=i+1}^{\ell} \frac{\E \psi_j(1-\psi_j)}{\E(1-\psi_j)^2}(f_n-f_j).
\end{align*}
The stated asymptotics follow from the same estimates as in Lemma~\ref{lem:mu_ijk}, together with $\E\psi_i^2/\Delta f_{i+1}\cong i^{2\beta-2}$ and $\sum_{j\le \ell}j^{-1}\cong \log \ell$. The final claim follows by summing over $i$ and using~\eqref{eq:s_n}.
\end{proof}

\begin{proof}[Proof of Theorem~\ref{thm:main} parts~\ref{thm:main_delta_zero} and~\ref{thm:main_delta_neg}]

By Theorem~\ref{thm:variance_order}, the dominating term in the decomposition of
$
T_n=\eqref{term:central0}+\eqref{term:central1}+\eqref{term:central2}+\eqref{term:central3}
$
is the conditional-mean term~\eqref{term:central0}. Recall the representations of $\Delta_{n,i}$ and its mean $\E \Delta_{n,i}= \mu_{i,\cdot,\cdot}^{(n)}$ as given in~\eqref{eq:Triangle_i} and~\eqref{eq:mu_idotdot}.

For any $i\ge 1$, similar to a step in the proof of Lemma~\ref{lem: centering term}, we decompose $\Delta_{n,i}$ into the terms~\eqref{eq:central0-2_const} and~\eqref{eq:central0-2},
\begin{align*}
 \Delta_{n,i}=\underbrace{ \mu_{i,\cdot,\cdot}^{(n)} \cdot X_i}_{\eqref{eq:central0-2_const}}+\underbrace{\sum_{i<j<k\le n} \mu_{i,j,k}\left(\sum_{\ell=i+1}^{k-1} X_iY^{[j]}_{i+1}\cdots Y^{[j]}_{\ell-1} \left( Y^{[j]}_{\ell}-1\right)\right)}_{\eqref{eq:central0-2}}.
\end{align*}
We now treat the second term, namely~\eqref{eq:central0-2}. The goal is to sum the coefficients $\mu_{i,j,k}$ into $\mu_{i,\cdot,\cdot}^{(n)}$. To do this, we (i) change the order of summation and (ii) pass from terms involving $Y_t^{[j]}$ to $Y_t$ (recall $Y_t^{[j]}=Y_t$ whenever $t<j$). We do this in a sequence of steps, collecting the leftover terms into the error terms $r_{i,1}$ and $r_{i,2}$:
\begin{align}
~\eqref{eq:central0-2}
 &=X_i \cdot\sum_{i<j<k\le n} \mu_{i,j,k}\left(\sum_{\ell=i+1}^{k-1} Y^{[j]}_{i+1}\cdots Y^{[j]}_{\ell-1} \left( Y^{[j]}_{\ell}-1\right)\right)\notag\\
 &=X_i \cdot\sum_{\ell=i+1}^{n-1} \sum_{j=i+1}^{n-1}\left(\sum_{k>j\vee\ell }^n\mu_{i,j,k}\right)Y^{[j]}_{i+1}\cdots Y^{[j]}_{\ell-1} \left( Y^{[j]}_{\ell}-1\right)\notag\\
 &=X_i \cdot\sum_{\ell=i+1}^{n-1} \sum_{j=\ell+1}^{n-1}\left(\sum_{k=j+1 }^{n}\mu_{i,j,k}\right)Y^{[j]}_{i+1}\cdots Y^{[j]}_{\ell-1} \left( Y^{[j]}_{\ell}-1\right)+r_{i,1}\notag\\
 &=\mu_{i,\cdot,\cdot}^{(n)}X_i \cdot\left(\sum_{\ell=i+1}^{n-1} Y_{i+1}\cdots Y_{\ell-1} \left( Y_{\ell}-1\right)\right)+r_{i,1}-r_{i,2},\label{eq:condmean_dom}
\end{align}
where
\begin{align}
 r_{i,1} &:=\sum_{\ell=i+1}^{n-1} \sum_{j=i+1}^\ell\left(\sum_{k>\ell }^n\mu_{i,j,k}\right) X_iY^{[j]}_{i+1}\cdots Y^{[j]}_{\ell-1} \left( Y^{[j]}_{\ell}-1\right),\label{eq:r1}\\
 r_{i,2}&:= \sum_{\ell=i+1}^{n-1} \left( \mu_{i,\cdot,\cdot}^{(n)}- \sum_{k,j=\ell+1, k>j}^n\mu_{i,j,k}\right)\cdot X_iY_{i+1}\cdots Y_{\ell-1} \left( Y_{\ell}-1\right).\label{eq:r2}
\end{align}

Next, we approximate the sum in~\eqref{eq:condmean_dom} by an infinite product of the $Y_j$'s. Indeed,
\begin{align*}
~\eqref{eq:condmean_dom}= \mu_{i,\cdot,\cdot}^{(n)} X_i\cdot \left(\prod_{j=i+1}^{\infty}Y_j-1\right) +r_{i,1}-r_{i,2}-r_{i,3}-r_{i,4},
\end{align*}
where the error terms are
\begin{align}
 r_{i,3}&:= \mu_{i,\cdot,\cdot}^{(n)}X_i \cdot \left(\prod_{j=i+1}^{n-1} Y_j-1\right)\cdot \left(\prod_{j=n}^\infty Y_j-1\right),\label{eq:r3}\\
 r_{i,4}&:=\mu_{i,\cdot,\cdot}^{(n)}X_i\cdot \left(\prod_{j=n}^\infty Y_j-1\right).\label{eq:r4}
\end{align}
Therefore, we get
\begin{align}
\Delta_{n,i}-\E \Delta_{n,i}=\mu_{i,\cdot,\cdot}^{(n)} \left( X_i\prod_{j=i+1}^{\infty}Y_j-1\right) +r_{i,1}-r_{i,2}-r_{i,3}-r_{i,4}.
\end{align}
Recall $\tilde{s}_n$ and $b_{i}$ from~\eqref{eq:s_n} and~\eqref{def:bi}, respectively. From Lemma~\ref{lem:mu_ijk}, for fixed $i$ we have
$
\mu_{i,\cdot,\cdot}^{(n)} /\tilde{s}_n\to b_i
$
as $n\to\infty$.
Finally, let
\begin{align*}
Z:=\sum_{i=1}^\infty b_{i}\cdot \Bigl(X_i\prod_{j=i+1}^\infty Y_j -1 \Bigr).
\end{align*}
Then
\begin{align}\label{eq:final_error}
 \norm{\Delta_n-\E \Delta_n-\tilde{s}_{n}Z}_2
 &\le \tilde{s}_n\cdot \norm{\sum_{i=n-1}^\infty b_i\cdot \Bigl(X_i\prod_{j=i+1}^\infty Y_j -1 \Bigr)}_2 + \sum_{j=1}^4 \norm{\sum_{i=1}^n r_{i,j} }_2\notag\\
& \qquad + \norm{\sum_{i=1}^{n-2} \left(\mu_{i,\cdot,\cdot}^{(n)}-\tilde{s}_{n}b_i\right)\cdot \Bigl(X_i\prod_{j=i+1}^\infty Y_j -1 \Bigr)}_2.
\end{align}
Using Lemma~\ref{lem:mu_ijk}, we get that 
\begin{align}\label{eq:mu_approx}
    \abs{\mu_{i,\cdot,\cdot}^{(n)} - \tilde{s}_n b_i}\le C\cdot \frac{\tilde{s}_n}{\log n}\cdot b_{i}\log i
\end{align}
for all $i\ge 1$ and we get a bound of $O(\tilde{s}_n/\log n)$ for the last term in~\eqref{eq:final_error}.
The next four lemmas, Lemmas~\ref{lem:infiniteprod}--\ref{lem:ri4}, bound $r_{i,j}$ for $j=1,2,3,4$; Lemma~\ref{lem:infiniteprod} also justifies the infinite-product representation. These estimates complete the proof. The $\dwas$ bound follows from the definition of the Wasserstein-$1$ distance $\dwas$. Finally, we note that the value of $\gamma$ given in \eqref{eq:gamma} can be obtained by comparing means of $T_n$ and the limiting distribution using equations~\eqref{eq:mu_approx}.
\end{proof}

\begin{lem}\label{lem:infiniteprod}
If $\delta\le 0$, then
\begin{align*}
\norm{\sum_{i=n-1}^\infty b_i\cdot \Bigl(X_i\prod_{j=i+1}^\infty Y_j -1 \Bigr)}_2\lesssim n^{2\beta-3/2}\le n^{-1/2}.
\end{align*}
\end{lem}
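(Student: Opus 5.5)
We bound the $L^2$ norm by the triangle inequality over $i$ (Minkowski), reducing the claim to a uniform bound on $\norm{X_i\prod_{j>i}Y_j-1}_2$ together with the decay $b_i\cong i^{2\beta-2}$ from Lemma~\ref{lem:mu_ijk} (equivalently from the identity $b_i=\E\psi_i^2/\Delta f_{i+1}$ and $\Delta f_t\cong t^{-2\beta}$). Since $\beta\le 1/2$, we have $2\beta-2\le -1$, and the sum $\sum_{i\ge n-1} i^{2\beta-2}$ is of order $n^{2\beta-1}$. This crude route is too weak by a factor $\sqrt n$. We therefore must exploit orthogonality between different $i$.

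\textbf{Step 1: well-definedness and second moments.} Set $M_i:=\prod_{j=i+1}^\infty Y_j$. By independence and $\E Y_j=1$, the partial products form a nonnegative martingale. By Lemma~\ref{lem:bnds on Y}, $\prod_{j}\E Y_j^2\le C$, so the martingale is $L^2$-bounded and converges in $L^2$. Hence $M_i$ exists with $\E M_i=1$, $\E M_i^2=\prod_{j>i}\E Y_j^2\le C$, and moreover $\E M_i^2-1\lesssim \sum_{j>i}j^{-2}\lesssim 1/i$. Similarly $\var(X_i)\le C$.

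\textbf{Step 2: covariance structure.} Write $V_i:=X_iM_i-1$, so that $\E V_i=0$ and $\E V_i^2=\E X_i^2\,\E M_i^2-1\le C$. For $i<i'$, using independence of $X_i$ from everything else and $M_i=\bigl(\prod_{j=i+1}^{i'-1}Y_j\bigr)Y_{i'}M_{i'}$, we get
\[
\E V_iV_{i'}=\E\bigl(Y_{i'}X_{i'}\bigr)\,\E M_{i'}^2-1.
\]
Now $\E Y_{i'}X_{i'}=\E\psi^2(1-\psi)^2/(\E\psi^2\E(1-\psi)^2)=1+O(1/i')$ by Lemma~\ref{lem:psimom} (since $\psi_{i'}=O(1/i')$ in all moments), and $\E M_{i'}^2=1+O(1/i')$ by Step 1. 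Hence $\abs{\E V_iV_{i'}}\lesssim 1/i'$.

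\textbf{Step 3: summation.} Expanding the square gives
\[
\norm{\sum_{i\ge n-1}b_iV_i}_2^2\lesssim\sum_{i\ge n-1}b_i^2+\sum_{n-1\le i<i'}b_ib_{i'}\,\frac{1}{i'}.
\]
The first sum is $\cong\sum_{i\ge n}i^{4\beta-4}\cong n^{4\beta-3}$. For the second, sum over $i'>i$ first: $\sum_{i'>i}i'^{2\beta-3}\cong i^{2\beta-2}$, leaving $\sum_{i\ge n}i^{4\beta-4}\cong n^{4\beta-3}$. Taking square roots yields $n^{2\beta-3/2}$, and $\beta\le1/2$ gives $n^{2\beta-3/2}\le n^{-1/2}$.

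\textbf{Main obstacle.} The only delicate point is Step 2: showing that the cross-covariances decay like $1/i'$ rather than being $O(1)$. This requires the precise expansions $\E X_tY_t=1+O(1/t)$ and $\prod_{j>t}\E Y_j^2=1+O(1/t)$, both of which follow from the Beta moment formulas of Lemma~\ref{lem:psimom} and the bound $\var(Y_t)\le C/t^2$.
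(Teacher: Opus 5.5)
Your proof is correct, but it places the orthogonality differently from the paper.

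\textbf{The paper's route.} It splits
\[
X_i\prod_{j>i}Y_j-1=(X_i-1)+X_i\Bigl(\prod_{j>i}Y_j-1\Bigr).
\]
The first pieces are independent and mean zero across $i$, so their weighted sum has $L^2$ norm $\bigl(\sum_{i\ge n-1}b_i^2\var(X_i)\bigr)^{1/2}\lesssim n^{2\beta-3/2}$. For the second pieces it just uses Minkowski: $\norm{X_i(\prod_{j>i}Y_j-1)}_2^2=\E X_i^2\,\var(\prod_{j>i}Y_j)\lesssim 1/i$, and $\sum_{i\ge n-1}i^{2\beta-2}\,i^{-1/2}\lesssim n^{2\beta-3/2}$. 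So the factor $\sqrt{n}$ that you correctly saw is lost by the crude triangle inequality is recovered there not by decorrelation between different $i$. It comes instead from the $i^{-1/2}$ decay of the tail-product fluctuation.

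\textbf{Your route.} You compute the exact covariance $\E V_iV_{i'}=\E(X_{i'}Y_{i'})\,\E M_{i'}^2-1$ and show it is $O(1/i')$. This needs the extra mixed-moment estimate $\E X_tY_t=1+O(1/t)$. That estimate does hold: for $\psi_t\sim\mathrm{Beta}(a,b)$ the ratio $\E\psi_t^2(1-\psi_t)^2/(\E\psi_t^2\,\E(1-\psi_t)^2)$ equals $\frac{(a+b)(a+b+1)}{(a+b+2)(a+b+3)}$ exactly, with $a+b=(2m+\delta)t-2m$.

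\textbf{Comparison.} The paper's argument is a bit more economical: it uses only $\var(Y_j)\lesssim j^{-2}$ and the uniform product bound of Lemma~\ref{lem:bnds on Y}, and never touches cross-moments between different indices. Yours gives a more explicit picture of the correlation structure of the summands, at the cost of one additional Beta-moment computation. Both yield the same rate $n^{2\beta-3/2}$.
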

\begin{proof}
Using independence and $\E X_i=\E Y_j=1$, Minkowski's inequality yields
\begin{align*}
&\norm{\sum_{i=n-1}^\infty b_i\bigl(X_iY_{i+1}Y_{i+2}\cdots -1\bigr)}_2\\
&\le \norm{\sum_{i=n-1}^\infty b_i(X_i-1)}_2
+ \sum_{i=n-1}^\infty b_i\norm{\sum_{j=i+1}^{\infty} X_iY_{i+1}\cdots Y_{j-1}(Y_j -1) }_2\\
&\lesssim \sqrt{\sum_{i=n-1}^\infty b_i^2\var(X_i)}
+\sum_{i=n-1}^\infty b_i\sqrt{\sum_{j=i+1}^{\infty} \E X_i^2 \E Y_{i+1}^2\cdots\E Y_{j-1}^2\var(Y_j)}.
\end{align*}
Since $b_i\cong i^{2\beta-2}$,
\begin{align*}
\sum_{i=n-1}^\infty b_i^2\var(X_i)\lesssim \sum_{i=n-1}^\infty i^{4\beta-4}\lesssim n^{4\beta-3}.
\end{align*}
Using the uniform second-moment product bound from Lemma~\ref{lem:bnds on Y} and $\var(Y_j)\lesssim j^{-2}$,
\begin{align*}
\sum_{j=i+1}^{\infty} \E X_i^2 \E Y_{i+1}^2\cdots\E Y_{j-1}^2\var(Y_j)\lesssim \sum_{j>i}\frac{1}{j^2}\lesssim \frac{1}{i}.
\end{align*}
Therefore,
\begin{align*}
\norm{\sum_{i=n-1}^\infty b_i\bigl(X_iY_{i+1}Y_{i+2}\cdots -1\bigr)}_2^2\lesssim n^{4\beta-3},
\end{align*}
which proves the claim.
\end{proof}

\begin{lem}\label{lem:ri1}
Fix $m\ge 2$ and $\delta\in(-m,0]$. Let $r_{i,1}$ be as defined in~\eqref{eq:r1}. Then
\begin{align*}
\norm{\sum_{i=1}^{n-2}r_{i,1}}_2\lesssim f_{n}.
\end{align*}
\end{lem}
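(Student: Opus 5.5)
We bound $\norm{\sum_i r_{i,1}}_2$ by rewriting $\sum_i r_{i,1}$ as a sum of martingale differences indexed by $\ell$, so that it splits into orthogonal pieces, and then estimating each piece with the moment bounds of Lemma~\ref{lem:bnds on Y} and the coefficient asymptotics of Lemma~\ref{lem:mu_ijk}.

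\textbf{Step 1: orthogonality in $\ell$.} Since the $\psi_t$ are independent and $\E(Y^{[j]}_\ell-1)=0$ when $\ell\neq j$, each summand of $r_{i,1}$ with $j<\ell$ is a martingale difference with respect to $\cG_\ell:=\sigma(\psi_1,\dots,\psi_\ell)$. Here $X_i$ and the factors with index below $\ell$ are $\cG_{\ell-1}$-measurable. The summands with $j=\ell$ involve $U_\ell-1$, which is also mean zero. We therefore write
\begin{align*}
\sum_{i=1}^{n-2} r_{i,1}=\sum_{\ell=2}^{n-1} D_\ell,\qquad
D_\ell:=\sum_{i<\ell}\sum_{j=i+1}^{\ell} c_{i,j,\ell}\, X_i Y^{[j]}_{i+1}\cdots Y^{[j]}_{\ell-1}\bigl(Y^{[j]}_\ell-1\bigr),
\end{align*}
with $c_{i,j,\ell}:=\sum_{k>\ell}\mu_{i,j,k}$. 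Because $\E(D_\ell\mid\cG_{\ell-1})=0$, the $D_\ell$ are orthogonal and
\[
\Bigl\|\sum_i r_{i,1}\Bigr\|_2^2=\sum_\ell \norm{D_\ell}_2^2 .
\]

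\textbf{Step 2: bounding each $D_\ell$.} Within $D_\ell$, separate the terms with $j<\ell$ from the term $j=\ell$. For $j<\ell$, the last factor is $Y_\ell-1$, which is independent of the rest. Hence
\[
\norm{\cdot}_2\le \sqrt{\var(Y_\ell)}\;\Bigl\|\sum_{i,j} c_{i,j,\ell}X_i\prod Y^{[j]}_t\Bigr\|_2 \lesssim \ell^{-1}\sum_{i<j<\ell}c_{i,j,\ell},
\]
using Minkowski, the product bound $\prod\E Y_t^2\le C$, and $\E U_j^2,\E X_i^2\lesssim 1$. For $j=\ell$, the last factor is $U_\ell-1$ with $\var(U_\ell)\le C$, so that piece has norm $\lesssim\sum_{i<\ell}c_{i,\ell,\ell}$.

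\textbf{Step 3: the coefficients.} From $\mu_{i,j,k}\cong i^{2\beta-2}j^{-1}\Delta f_k$, Corollary~\ref{lem:mu_ijk2}-type summation gives
\[
c_{i,j,\ell}\cong i^{2\beta-2}j^{-1}(f_n-f_\ell).
\]
Summing over $i$ contributes only a constant, because $2-2\beta\ge1$. In the boundary case $\beta=1/2$ this sum is $\sum_i i^{-1}$, and one must check that the extra $j^{2\beta-1}$ factors do not add logs; this may need a slightly finer sum over $i<j$. With this, Step 2 gives
\[
\norm{D_\ell}_2\lesssim (f_n-f_\ell)\Bigl(\ell^{-1}\log\ell+\ell^{-1}\Bigr).
\]
Squaring and summing, since $f_n-f_\ell\le f_n$,
\[
\sum_\ell \norm{D_\ell}_2^2\lesssim f_n^2\sum_\ell \ell^{-2}(\log\ell)^2\lesssim f_n^2 ,
\]
which is the claim $\norm{\sum_i r_{i,1}}_2\lesssim f_n$.

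\textbf{Main obstacle.} The delicate point is the $j<\ell$ piece. A careless triangle inequality over $j$ produces a factor $\log\ell$, and when summing over $i$ at $\beta=1/2$ it could produce $\log n$. I would first try to keep the $\log\ell$ inside $\sum_\ell\ell^{-2}(\log\ell)^2<\infty$, where it is harmless. If the sum over $i$ costs a $\log$, I would instead exploit orthogonality across different $i$: the $X_i-1$ are themselves martingale differences in $i$. This means replacing $X_i$ by $1+(X_i-1)$ and treating the centered part in $L^2$ rather than by Minkowski.
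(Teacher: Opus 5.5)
Your argument is correct and is essentially the paper's proof. Like the paper, you use orthogonality of the $\ell$-indexed martingale differences, the exact coefficient $\sum_{k>\ell}\mu_{i,j,k}=b_i\,\frac{\E\psi_j(1-\psi_j)}{\E(1-\psi_j)^2}\,(f_n-f_\ell)\lesssim f_n\, i^{2\beta-2}j^{-1}$, and the bounds $\var(Y_\ell)\lesssim \ell^{-2}$, $\var(U_\ell)\lesssim 1$. The paper expands $\E D_\ell^2$ through $\cov(Y^{[j]}_\ell,Y^{[j']}_\ell)$, including $\cov(U_\ell,Y_\ell)\lesssim 1/\ell$; you instead split off $j=\ell$ and apply Minkowski, which is equivalent.

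The only slip is at $\beta=1/2$. There $\sum_{i<j}i^{-1}\cong\log j$, so the correct bound is $\norm{D_\ell}_2\lesssim f_n\,\ell^{-1}(\log\ell)^2$. Since $i<j\le\ell$, this logarithm is always in $\ell$ and never in $n$. Hence $\sum_\ell(\log\ell)^4\ell^{-2}<\infty$ closes the argument, and the fallback via orthogonality in $i$ is unnecessary.
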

\begin{proof}
Denote $
 \mu_{i,j,(\ell,n)} := \sum_{k>\ell }^n\mu_{i,j,k}.
$
We have
\begin{align*}
 \norm{\sum_{i=1}^{n-2}r_{i,1}}_2^2
 &\lesssim \sum_{\ell=2}^{n-1} \sum_{i<j\le \ell\atop i'<j'\le \ell}
 \mu_{i,j,(\ell,n)} \mu_{i',j',(\ell,n)} \cdot \cov(Y^{[j]}_{\ell},Y^{[j']}_{\ell}) \\
 &\lesssim
 f_n^2 \cdot\sum_{\ell=2}^{n-1} \sum_{i<j< \ell\atop i'<j'< \ell}
 \frac{\E \psi_i^2}{\Delta f_{i+1}}\cdot \frac{\E \psi_{i'}^2}{\Delta f_{i'+1}} \cdot \frac{1}{jj'}\cdot \ell^{-2} \\
 &\quad+2f_n^2 \cdot\sum_{\ell=2}^{n-1} \sum_{i<j< \ell\atop i'< \ell}
 \frac{\E \psi_i^2}{\Delta f_{i+1}}\cdot \frac{\E \psi_{i'}^2}{\Delta f_{i'+1}} \cdot \frac{1}{j\ell}\cdot \frac1{\ell} \\
 &\quad+ 2f_n^2 \cdot\sum_{\ell=2}^{n-1} \sum_{i< \ell\atop i'< \ell}
 \frac{\E \psi_i^2}{\Delta f_{i+1}}\cdot \frac{\E \psi_{i'}^2}{\Delta f_{i'+1}} \cdot \frac{1}{\ell^2}
 \lesssim
 f_n^2 \cdot\sum_{\ell=2}^{n-1} \sum_{i\le i'< \ell}
 (ii')^{2\beta-2}\cdot (\log \ell)^2\ell^{-2}.
\end{align*}
Finally, using $\beta\le 1/2$, we get that the last term is bounded by a constant multiple of $f_n^2$. This completes the proof.
\end{proof}

\begin{lem}\label{lem:ri2}
Fix $m\ge 2$ and $\delta\in(-m,0]$. Let $r_{i,2}$ be as defined in~\eqref{eq:r2}.
Then
\begin{align*}
\norm{\sum_{i=1}^{n-2}r_{i,2}}_2\lesssim f_n.
\end{align*}
\end{lem}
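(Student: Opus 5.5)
We bound $\sum_i r_{i,2}$ in $L^2$ by mirroring the proof of Lemma~\ref{lem:ri1}: an explicit deterministic estimate on the coefficients, followed by orthogonality of the martingale increments. The first step is to simplify the coefficient in~\eqref{eq:r2}. Since $\mu_{i,\cdot,\cdot}^{(n)}=\sum_{i<j<k\le n}\mu_{i,j,k}$, the difference
\[
c_{i,\ell}:=\mu_{i,\cdot,\cdot}^{(n)}-\sum_{\ell<j<k\le n}\mu_{i,j,k}=\sum_{i<j\le \ell}\ \sum_{k=j+1}^{n}\mu_{i,j,k}
\]
only involves $j\le\ell$. Using the formula $\mu_{i,j,k}=\frac{\E\psi_i^2}{\Delta f_{i+1}}\Delta f_k\frac{\E\psi_j(1-\psi_j)}{\E(1-\psi_j)^2}$ and $f_n-f_j\le f_n$, we get, as in Corollary~\ref{lem:mu_ijk2},
\[
0\le c_{i,\ell}\lesssim f_n\, i^{2\beta-2}\sum_{j=i+1}^{\ell}\frac1j\lesssim f_n\, i^{2\beta-2}\log \ell .
\]
The $f_n$ factor, rather than $\tilde s_n$, comes from dropping the $\log n$-weight of the $j$-sum. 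This gives the claimed order.

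Next, write $\sum_{i}r_{i,2}=\sum_{\ell=2}^{n-1}\sum_{i<\ell}c_{i,\ell}X_iY_{i+1}\cdots Y_{\ell-1}(Y_\ell-1)$ and interchange the sums. With respect to the filtration $\sigma(\psi_1,\ldots,\psi_\ell)$, the $\ell$-th summand is a martingale difference, because $Y_\ell$ is independent of the past and has mean one. The increments for different $\ell$ are therefore orthogonal, and
\[
\norm{\sum_i r_{i,2}}_2^2=\sum_{\ell}\var(Y_\ell)\,\E\Bigl(\sum_{i<\ell}c_{i,\ell}X_iY_{i+1}\cdots Y_{\ell-1}\Bigr)^2 .
\]
We bound the inner second moment by $\sum_{i\le i'<\ell}c_{i,\ell}c_{i',\ell}\,\E[X_iY_{i+1}\cdots Y_{i'-1}Y_{i'}X_{i'}]\prod_{t=i'+1}^{\ell-1}\E Y_t^2$. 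The diagonal-type factors $\E Y_{i'}X_{i'}$ and $\E X_{i'}^2$ are uniformly bounded (by Lemma~\ref{lem:bnds on Y} and the Beta moments), and the products $\prod\E Y_t^2$ are bounded by Lemma~\ref{lem:bnds on Y}. Combining this with $\var(Y_\ell)\lesssim \ell^{-2}$, we obtain
\[
\norm{\sum_i r_{i,2}}_2^2\lesssim f_n^2\sum_{\ell=2}^{n-1}\frac{(\log\ell)^2}{\ell^2}\sum_{i\le i'<\ell}(ii')^{2\beta-2}.
\]

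The last step is to show this sum is $O(1)$. For $\beta<1/2$, the inner sum is $O(1)$. For $\beta=1/2$, it is $O((\log \ell)^2)$ at worst, and $\sum_\ell(\log\ell)^4\ell^{-2}<\infty$ in either case. This gives $\norm{\sum_i r_{i,2}}_2\lesssim f_n$.

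The main point requiring care is the cross-moment $\E[X_iY_{i+1}\cdots Y_{i'-1}Y_{i'}X_{i'}]$ when $i<i'$. Here $X_{i'}$ and $Y_{i'}$ are functions of the same $\psi_{i'}$, so $\E[Y_{i'}X_{i'}]$ must be checked to be bounded. This holds because $(1-\psi)^2\le1$ and $\E Y_{i'}\cong 1$ uniformly. Beyond that, the estimate is a direct analogue of the bound in Lemma~\ref{lem:ri1}.
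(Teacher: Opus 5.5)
Your proof is correct and matches the paper's argument: you bound the coefficient by $f_n\, i^{2\beta-2}\log\ell$ (Corollary~\ref{lem:mu_ijk2}), use orthogonality of the increments in $\ell$ together with uniformly bounded cross moments, $\prod_t\E Y_t^2\lesssim 1$ and $\var(Y_\ell)\lesssim \ell^{-2}$, and check that the resulting series converges for $\beta\le 1/2$; the only difference is that the paper sums over $\ell$ first, obtaining $\sum_{i'\le i}(\log i)^2\, i^{2\beta-3}(i')^{2\beta-2}$. Two remarks: your identity $c_{i,\ell}=\sum_{i<j\le \ell}\sum_{k>j}\mu_{i,j,k}$ is the exact coefficient in~\eqref{eq:r2}, whereas the paper's displayed rewrite keeps only the $k>\ell$ part, though the corollary it invokes bounds your $c_{i,\ell}$; and $\E[X_{i'}Y_{i'}]\lesssim 1$ is best justified by $X_{i'}Y_{i'}\le X_{i'}/\E(1-\psi_{i'})^2$ with $\E(1-\psi_t)^2$ bounded below uniformly in $t$, rather than by ``$\E Y_{i'}\cong 1$'', which holds exactly by definition.
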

\begin{proof}
First, rewrite $r_{i,2}$ as
\begin{align*}
r_{i,2}= \sum_{\ell=i+1}^{n-1} \left( \sum_{i<j\le \ell<k\le n}\mu_{i,j,k}\right) X_iY_{i+1}\cdots Y_{\ell-1} \left( Y_{\ell}-1\right).
\end{align*}
Then, using independence of the $Y_t$'s and $\E Y_t=1$,
\begin{align*}
\E\abs{\sum_{i=1}^{n-2}r_{i,2}}^2
&\lesssim \sum_{i,i'\ge 1}\sum_{\ell\ge i\vee i'} \left( \sum_{i<j\le \ell<k\le n}\mu_{i,j,k}\right)\left( \sum_{i'<j'\le \ell<k'\le n}\mu_{i',j',k'}\right) \\
&\qquad \cdot \E(X_iX_{i'})\prod_{t=i\vee i' +1}^{\ell-1} \E(Y_t^2)\cdot \var(Y_\ell)\\
&\lesssim f_n^2\cdot \left(\sum_{1\le i'\le i \le n}\frac{(\log i)^2}{i^{3-2\beta}(i')^{2-2\beta}}\right)
\lesssim f_n^2.
\end{align*}
In the last step we used Corollary~\ref{lem:mu_ijk2}, the bound $\var(Y_\ell)\lesssim \ell^{-2}$, and $\sum_{\ell\ge t}(\log \ell)^2/\ell^2 \lesssim (\log t)^2/t$. This completes the proof.
\end{proof}

\begin{lem}\label{lem:varYprod}
We have
$
\var\left(\prod_{j=n}^\infty Y_j\right)\cong 1/n.
$
\end{lem}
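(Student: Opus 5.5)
The plan is to exploit independence of the $Y_j$'s together with $\E Y_j=1$. The key identity is
\[
\E\prod_{j=n}^N Y_j^2=\prod_{j=n}^N \E Y_j^2=\prod_{j=n}^N\bigl(1+\var(Y_j)\bigr),
\]
which gives
\[
\var\Bigl(\prod_{j=n}^N Y_j\Bigr)=\prod_{j=n}^N\bigl(1+\var(Y_j)\bigr)-1 .
\]
First we justify passing to $N=\infty$. The partial products $M_N:=\prod_{j=n}^N Y_j$ form a nonnegative mean-one martingale. Its second moments are bounded uniformly in $N$ by Lemma~\ref{lem:bnds on Y}, since $\prod_{j}\E Y_j^2\le C$. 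Hence $M_N$ converges almost surely and in $L^2$ to $\prod_{j=n}^\infty Y_j$, and the variance formula above holds with $N=\infty$.

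It then suffices to show that $\var(Y_j)\cong j^{-2}$, with matching constants from above and below. The upper bound $\var(Y_j)\le C/j^2$ is already contained in Lemma~\ref{lem:bnds on Y}. For the lower bound, write $\psi=\psi_j$ and use the exact moments from Lemma~\ref{lem:psimom}:
\[
\var(Y_j)=\frac{\E(1-\psi)^4}{(\E(1-\psi)^2)^2}-1 .
\]
Expanding around $1-\psi$, the leading term is $4\var(\psi)/(\E(1-\psi)^2)^2+O(\E\psi^3)$. Alternatively, since $(2m+\delta)j\psi_j$ converges in $L^p$ to a $\mathrm{Gamma}(m+\delta,1)$ variable $\xi$, we get
\[
j^2\var\bigl((1-\psi_j)^2\bigr)\to\frac{4}{(2m+\delta)^2}\var(\xi)>0,
\]
because $\var\bigl((1-\psi)^2\bigr)=\var\bigl(-2\psi+\psi^2\bigr)=4\var(\psi)+O(j^{-3})$. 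Combined with $\E(1-\psi_j)^2\to1$, this yields $\var(Y_j)\ge c/j^2$ for all large $j$. For the finitely many small $j$, the variance is still strictly positive because $\psi_j$ is nondegenerate.

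Finally, set $v_j:=\var(Y_j)$, so that $c/j^2\le v_j\le C/j^2$. Then
\[
\sum_{j\ge n}v_j\cong 1/n,
\]
and this sum is bounded uniformly in $n$. Using $x\le e^x-1\le x e^x$ together with $\log(1+v_j)\in[v_j/2,v_j]$ for $v_j\le1$, we obtain
\[
\prod_{j\ge n}(1+v_j)-1\cong\sum_{j\ge n}v_j\cong\frac1n .
\]
The only step requiring care is the lower bound $\var(Y_j)\gtrsim j^{-2}$. The cleanest route is the $L^p$ Gamma limit already invoked in the proof of Lemma~\ref{lem:bnds on Y}, or, if preferred, an explicit computation from the Beta moments.
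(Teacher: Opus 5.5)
Your proof is correct and is essentially the paper's argument. The paper writes $\prod_{j\ge n}Y_j-1=\sum_{j\ge n}\prod_{k=n}^{j-1}Y_k\,(Y_j-1)$ and obtains $\var=\sum_{j\ge n}\prod_{k=n}^{j-1}\E Y_k^2\,\var(Y_j)$, which is exactly the telescoped form of your identity $\prod_{j\ge n}(1+v_j)-1$; it then uses $\prod\E Y_k^2\le C$ and $\var(Y_j)\cong j^{-2}$ as you do. Your explicit lower bound $\var(Y_j)\gtrsim j^{-2}$ (via $\var((1-\psi_j)^2)=4\var(\psi_j)+O(j^{-3})$) and your $L^2$-martingale justification of the infinite product supply details the paper leaves implicit, since Lemma~\ref{lem:bnds on Y} only states the upper bound.
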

\begin{proof}
We have
$
\prod_{j=n}^\infty Y_j=1+ \sum_{j=n}^\infty \prod_{k=n}^{j-1}Y_k\cdot (Y_j-1).
$
Using Lemma~\ref{lem:bnds on Y}, we get
\[
\var\left(\prod_{j=n}^\infty Y_j\right)=
\sum_{j=n}^\infty \prod_{k=n}^{j-1}\E Y_k^2\cdot \var(Y_j) \cong 1/n.
\]
This completes the proof.
\end{proof}

\begin{lem}\label{lem:ri3}
Fix $m\ge 2$ and $\delta\in(-m,0]$. Let $r_{i,3}$ be as defined in~\eqref{eq:r3}.
Then
\begin{align*}
\norm{\sum_{i=1}^{n-2}r_{i,3}}_2\cong s_n\cdot n^{-1/2}.
\end{align*}
\end{lem}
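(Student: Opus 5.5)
Write $P_i:=\prod_{j=i+1}^{n-1}Y_j$ and $Q_n:=\prod_{j=n}^\infty Y_j$. Then
\[
\sum_{i=1}^{n-2} r_{i,3}=(Q_n-1)\cdot \sum_{i=1}^{n-2}\mu_{i,\cdot,\cdot}^{(n)}X_i(P_i-1)=:(Q_n-1)\,A_n .
\]
The variables $X_i,Y_{i+1},\dots,Y_{n-1}$ are independent of $Y_n,Y_{n+1},\dots$, so $A_n$ is independent of $Q_n$. Since $\E Q_n=1$, we get
\[
\norm{\textstyle\sum_i r_{i,3}}_2^2=\E A_n^2\cdot\var(Q_n).
\]
Lemma~\ref{lem:varYprod} gives $\var(Q_n)\cong 1/n$. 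The statement therefore reduces to showing $\norm{A_n}_2\cong s_n$, upper and lower bounds, up to constants.

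\textbf{Upper bound.} By Minkowski, $\norm{A_n}_2\le\sum_i \mu_{i,\cdot,\cdot}^{(n)}\norm{X_i(P_i-1)}_2$. By independence,
\[
\norm{X_i(P_i-1)}_2^2=\E X_i^2\,\var(P_i),
\]
and the martingale-difference expansion used in Lemma~\ref{lem:varYprod} gives $\var(P_i)\lesssim 1/i$, uniformly in $n$. Lemma~\ref{lem:bnds on Y} gives $\E X_i^2\lesssim 1$. Lemma~\ref{lem:mu_ijk} gives $\mu_{i,\cdot,\cdot}^{(n)}\lesssim \tilde s_n i^{2\beta-2}$. Hence
\[
\norm{A_n}_2\lesssim \tilde s_n\sum_i i^{2\beta-2-1/2},
\]
and the series converges because $\beta\le 1/2$. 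Combined with $\tilde s_n\cong s_n$ from~\eqref{eq:s_n}, this gives $\norm{A_n}_2\lesssim s_n$.

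\textbf{Lower bound.} I expect this to be the main obstacle, because the summands in $A_n$ are correlated and cancellation must be excluded. Expand $P_i-1=\sum_{\ell=i+1}^{n-1}Y_{i+1}\cdots Y_{\ell-1}(Y_\ell-1)$ and group $A_n$ by $\ell$:
\[
A_n=\sum_{\ell=2}^{n-1} D_\ell,\qquad D_\ell:=(Y_\ell-1)\sum_{i<\ell}\mu_{i,\cdot,\cdot}^{(n)}X_iY_{i+1}\cdots Y_{\ell-1}.
\]
The $D_\ell$ are martingale differences with respect to $\sigma(\psi_1,\dots,\psi_\ell)$, so they are orthogonal and $\E A_n^2=\sum_\ell\E D_\ell^2$. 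Every term inside $D_\ell$ is nonnegative, so keeping only the $\ell=2$ term (where $i=1$ and $X_1\equiv1$) gives
\[
\E A_n^2\ge \bigl(\mu_{1,\cdot,\cdot}^{(n)}\bigr)^2\var(Y_2).
\]
Here $\var(Y_2)>0$ is a fixed constant, and $\mu_{1,\cdot,\cdot}^{(n)}\ge c\,\tilde s_n$ for large $n$ by Lemma~\ref{lem:mu_ijk}, since $b_1=1$. Thus $\norm{A_n}_2\gtrsim s_n$.

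Multiplying the two bounds by $\var(Q_n)^{1/2}\cong n^{-1/2}$ yields $\norm{\sum_i r_{i,3}}_2\cong s_n n^{-1/2}$.
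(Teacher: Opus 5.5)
Your proof is correct. For the upper bound it follows the same route as the paper, and it adds a lower bound that the paper's proof does not give.

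\textbf{Common route.} Both arguments factor off $\norm{\prod_{j\ge n}Y_j-1}_2\cong n^{-1/2}$ by independence. Both then bound the $L^2$ norm of the remaining sum using the telescoping expansion of the $Y$-product and Lemma~\ref{lem:bnds on Y}.

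\textbf{Where you differ.}
\begin{itemize}
\item The paper expands the square of $\sum_i\mu_{i,\cdot,\cdot}^{(n)}(\cdots)$ into diagonal and cross terms and estimates their orders. This yields only $\lesssim s_n^2$, so the lower half of the claimed $\cong$ is never argued there. Only the upper bound is needed in \eqref{eq:final_error}.
\item The paper's displayed rewriting has $X_i-1+X_i\sum_k Y_{i+1}\cdots Y_{k-1}(Y_k-1)=X_iP_i-1$ (your $P_i$), whereas \eqref{eq:r3} has $X_i(P_i-1)$. Your $A_n$ matches the definition.
\item Your Minkowski step avoids the cross-term bookkeeping at no cost, because $\sum_i i^{2\beta-5/2}<\infty$ for $\beta\le 1/2$.
\item Your regrouping into the orthogonal martingale differences $D_\ell$, keeping the $\ell=2$ term, gives a clean matching lower bound. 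This uses $\mu_{1,\cdot,\cdot}^{(n)}=\tilde s_n$, which holds exactly since $\psi_1\equiv 1$ and $\Delta f_2=1$.
\end{itemize}

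\textbf{One caveat.} Your lower bound also uses the lower half of Lemma~\ref{lem:varYprod}, which requires $\var(Y_j)\gtrsim j^{-2}$. This is true, via the Gamma limit of $j\psi_j$, but Lemma~\ref{lem:bnds on Y} records only upper bounds. A fully self-contained version would state it.
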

\begin{proof}
We have
\begin{align*}
 \sum_{i=1}^{n-2}r_{i,3}
 =
 \sum_{i=1}^{n-2}\mu_{i,\cdot,\cdot}^{(n)}\cdot \biggl(X_i-1 + X_i\sum_{k=i+1}^{n-1}Y_{i+1}\cdots Y_{k-1}(Y_k-1)\biggr)\cdot \left(\prod_{j=n}^\infty Y_j-1\right).
\end{align*}
Using Lemma~\ref{lem:bnds on Y},
\begin{align*}
 &\norm{\sum_{i=1}^{n-2}\mu_{i,\cdot,\cdot}^{(n)}\cdot \biggl(X_i-1 + X_i\sum_{k=i+1}^{n-1}Y_{i+1}\cdots Y_{k-1}(Y_k-1)\biggr)}_2^2 \\
 &= 
 \sum_{i=1}^{n-2} \underbrace{(\mu_{i,\cdot,\cdot}^{(n)})^2}_{(s_n/i^{2-2\beta})^2} \underbrace{\biggl(\var(X_i) + \E X_i^2\sum_{k=i+1}^{n-1}\E Y_{i+1}^2\cdots \E Y_{k-1}^2\var(Y_k)\biggr)}_{\text{uniformly bounded}}\\
 &\quad+ 2\sum_{i=1\atop i< i'}^{n-2}\sum_{k=i'+1}^{n-1} \underbrace{\mu_{i,\cdot,\cdot}^{(n)}}_{s_n/i^{2-2\beta}}\underbrace{\mu_{i',\cdot,\cdot}^{(n)}}_{s_n/(i')^{2-2\beta}}
 \underbrace{\E (X_iY_{i+1}\cdots Y_{i'-1})\E(X_{i'}Y_{i'}) \cdot \E (Y_{i'+1}^2\cdots Y_{k-1}^2)}_{\text{uniformly bounded}}\underbrace{\var(Y_k)}_{1/k^2}\\
 &\lesssim s_n^2,
\end{align*}
where, with under-braces, we denote the orders of the respective terms.
The claim follows by multiplying with $\norm{\prod_{j=n}^\infty Y_j-1}_2\cong n^{-1/2}$.
\end{proof}

\begin{lem}\label{lem:ri4}
Fix $m\ge 2$ and $\delta\in(-m,0]$. Let $r_{i,4}$ be as defined in~\eqref{eq:r4}.
Then
\begin{align*}
\norm{\sum_{i=1}^{n-2}r_{i,4}}_2\lesssim \frac{s_n}{\sqrt{n}}\cdot \log n.
\end{align*}
\end{lem}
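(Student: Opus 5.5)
The plan is to exploit the product structure of $r_{i,4}$. Write
\[
\sum_{i=1}^{n-2} r_{i,4}=A_n\cdot B_n,\qquad A_n:=\sum_{i=1}^{n-2}\mu_{i,\cdot,\cdot}^{(n)}X_i,\qquad B_n:=\prod_{j=n}^\infty Y_j-1.
\]
The factor $A_n$ depends only on $\psi_1,\dots,\psi_{n-2}$, while $B_n$ depends only on $(\psi_j)_{j\ge n}$. Hence $A_n$ and $B_n$ are independent, and
\[
\norm{A_nB_n}_2=\norm{A_n}_2\norm{B_n}_2 .
\]
By Lemma~\ref{lem:varYprod}, $\norm{B_n}_2^2=\var\bigl(\prod_{j\ge n}Y_j\bigr)\cong 1/n$, since $\E\prod_{j\ge n}Y_j=1$. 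This product converges in $L^2$ by Lemma~\ref{lem:bnds on Y}. It therefore suffices to show $\norm{A_n}_2\lesssim s_n\log n$.

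For $A_n$, use Minkowski's inequality with $\norm{X_i}_2\le C$ uniformly in $i$, which holds since $\var(X_i)\le C$ and $\E X_i=1$ by Lemma~\ref{lem:bnds on Y}. This gives
\[
\norm{A_n}_2\le C\sum_{i=1}^{n-2}\mu_{i,\cdot,\cdot}^{(n)} .
\]
The right-hand side is controlled by Corollary~\ref{lem:mu_ijk2}: $\sum_i\mu_{i,\cdot,\cdot}^{(n)}\cong \tilde s_n\log n$ when $\beta=1/2$, and $\cong\tilde s_n$ when $\beta<1/2$. In both cases this is $\lesssim s_n\log n$, using $\abs{\tilde s_n/s_n-1}\lesssim 1/\log n$.

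Alternatively, one can bypass the corollary with the bound $\mu_{i,\cdot,\cdot}^{(n)}\lesssim \tilde s_n i^{2\beta-2}$ from Lemma~\ref{lem:mu_ijk}. Then $\sum_{i\le n} i^{2\beta-2}\lesssim \log n$ for $\beta\le 1/2$, with equality of order only at $\beta=1/2$. This again gives $\norm{A_n}_2\lesssim s_n\log n$.

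A sharper route is to split $A_n=\sum_i\mu_{i,\cdot,\cdot}^{(n)}+\sum_i\mu_{i,\cdot,\cdot}^{(n)}(X_i-1)$. The second sum has independent centered terms, so its $L^2$ norm is $\lesssim \tilde s_n(\sum_i i^{4\beta-4})^{1/2}\lesssim s_n$. The loss of $\log n$ at $\delta=0$ comes only from the deterministic mean part, which is why the statement carries the factor $\log n$.

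Combining the two factors gives $\norm{\sum_i r_{i,4}}_2\lesssim s_n\log n\cdot n^{-1/2}$, as claimed. The only step needing care is the independence claim. The indices in $A_n$ stop at $n-2$ (the $X_i$ with $i\le n-2$), while the $Y_j$ in $B_n$ start at $n$, so the two factors involve disjoint sets of the independent $\psi$'s. I do not expect a genuine obstacle. The bound is crude but sufficient, since compared with $s_n$ it is $O(\log n/\sqrt n)=o(1/\sqrt{\log n})$.
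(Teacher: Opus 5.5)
Your proposal is correct and follows the paper's proof. You factor $\sum_i r_{i,4}$ into the independent pieces $\sum_{i\le n-2}\mu_{i,\cdot,\cdot}^{(n)}X_i$ and $\prod_{j\ge n}Y_j-1$, use Lemma~\ref{lem:varYprod} for the factor $n^{-1/2}$, and bound the first factor in $L^2$ by $s_n\log n$. You also supply the details the paper leaves implicit: the two factors use disjoint sets of $\psi$'s, and Minkowski combined with $\mu_{i,\cdot,\cdot}^{(n)}\lesssim \tilde s_n\, i^{2\beta-2}$ gives the bound on the first factor.
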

\begin{proof}
Therefore,
\begin{align*}
 \norm{\sum_{i=1}^{n-2}r_{i,4}}_2
 &=\norm{\sum_{i=1}^{n-2}\mu_{i,\cdot,\cdot}^{(n)}X_i }_{2} \norm{\prod_{j=n}^\infty Y_j-1}_2\lesssim s_n\cdot \log n\cdot \frac{1}{\sqrt{n}}.
\end{align*}
This completes the proof.
\end{proof}

\section{Phase transition in $\delta $}\label{sec:phase_transition}
Let $m\ge 2$ be a fixed integer. Fix a sequence of real numbers $\mvgd=(\delta_t)_{t\ge 1}\in [-1,1]^\dN$. We modify the definition of the random network --  the network is constructed recursively as follows.
\begin{algorithmic}
 \State $\mathbf{n=1}$: $\mathrm{PAM}_1(m, \mvgd)$ consists of a single vertex and no edges.
 \State $\mathbf{n=2}$: $\mathrm{PAM}_2(m, \mvgd)$ consists of two vertices with $m$ edges between them.
 \State $\mathbf{n=k}$:~Given $\mathrm{PAM}_{k-1}(m, \mvgd)$, the $k$-th vertex is added with $m$ edges $\{e_{k,\ell}\}_{\ell\in[m]}$. The edges connect sequentially to vertices of $\mathrm{PAM}_{k-1}(m, \mvgd)$ with probabilities proportional to an affine function of the current degree, which is updated after each new connection; in other words, for $\ell\in[m-1]$ and $v\in[k-1]$,
 \[
 \pr(e_{k,\ell+1} \text{ connects to } v)=
 \frac{\deg^{(\ell)}_{k-1}(v)+\delta_v}{2m(k-2) + \ell + \sum_{i=1}^{k-1}\delta_i},
 \]
 where $\deg_{k-1}^{(\ell)}(v)$ denotes the degree of vertex $v\in \mathrm{PAM}_{k-1}(m, \mvgd)$ after $\ell$ edges from the $k$-th vertex have been connected to the graph.
\end{algorithmic}

The above graph process can also be described by the P\'olya urn graph introduced in~\cite{urngraph} with a little modification. 
Let $\psi_1\equiv 1$ and, for all $i\ge 2$, let $\psi_i$ be independent $\mathrm{Beta}(m+\delta_i,m(2i-3)+\sum_{j=1}^{i-1}\delta_j)$ random variables. Define
\begin{align*}
\varphi_{j,n}:=\psi_j\prod_{t=j+1}^n(1-\psi_t),\quad S_{(k,n]}:=\sum_{j=1}^k\varphi_{j,n}=\prod_{t=k+1}^n(1-\psi_t), \quad I_k^{(n)}:=\left(S_{(k-1,n]},S_{(k,n]}\right].
\end{align*}
Conditional on $\mvpsi:=(\psi_i)_{i\ge 1}$, let 
\begin{align*}
\{ U_{k,\ell}\}_{k\in[n],\ell\in[m]} \text{ be independent and uniformly distributed on } [0,S_{(k-1,n]}].
\end{align*}
For any $1\le j<k\le n$, place an edge from $k$ to $j$ for every $U_{k,\ell}$ contained in $I_j^{(n)}$ for some $\ell\in[m]$. The random graph constructed in such a manner has the same distribution as the sequential linear preferential attachment model $\pam(m,\mvgd)$.

\begin{proof}[Proof of Theorem~\ref{thm:phase_transition}]
In particular, we have the conditional expectation given $\mvpsi$ of the number of triangles with $i$ as the lowest labeled vertex is $\E(T_{n,i}\mid \mvpsi)=m^2(m-1) \Delta_{n,i}$ for $i\ge 1$ where
\begin{align}
\Delta_{n,i}:=\psi_{i}^{2} \sum_{i<j<k\le n} \prod_{t=i+1, t\neq j}^{k-1}(1-\psi_{t})^{2}\cdot \psi_{j} (1-\psi_{j})
\end{align}
with mean
\begin{align}
S_{n}(i):=\E\Delta_{n,i}=\frac{\E \psi_i^2}{\prod_{t=2}^{i} \E(1-\psi_t)^2} \sum_{i<j< k\le n} \prod_{t=2}^{k-1} \E(1-\psi_t)^2\cdot \frac{\E\psi_j(1-\psi_j)}{\E(1-\psi_j)^2}.
\end{align}
Define, $f_1=g_1=0,h_1=1$ and for $n\ge 2$
\[
f_{n}:=1+\sum_{k=2}^{n-1}\prod_{t=2}^{k} \E(1-\psi_t)^2, \quad
g_{n}:= \sum_{j=2}^{n} \frac{\E\psi_j(1-\psi_j)}{\E(1-\psi_j)^2},\quad
h_{n}:=\sum_{i=1}^{n} \frac{\E \psi_i^2}{\Delta f_{i+1}},
\]
so that we have with $\Delta f_{n}:=f_{n}-f_{n-1},$ $\Delta  g_{n}:=g_{n}-g_{n-1},$ $\Delta  h_{n}=h_{n}-h_{n-1}$,
\[
\E\Delta_{n,i} = \sum_{i<j< k\le n} \Delta  h_i\cdot \Delta  g_j\cdot \Delta f_k.
\]
We fix $c \in \dR$, and $\alpha>0$. Define $\delta _1\equiv 0$ and 
$
\delta_t := 2mc \cdot (\log t)^{-\alpha}\text{ for }t \ge 2.
$
Thus, we have 
$$
\log \E(1-\psi_{t})^{2} = -\frac{2}{t}\cdot \frac{m+\delta_t}{2m+ \frac1t\sum_{i=1}^t \delta _i } +O(1/t^{2}) = -\frac1t - \frac{2\delta_t - \frac1t\sum_{i=1}^t \delta _i}{2mt+\sum_{i=1}^t \delta _i } + O(1/t^{2}), \quad t\ge 2.
$$
Note that, $\frac1n\sum_{i=1}^n \delta _i = \Delta_n\cdot (1+O(1/\log n))$, $\E \psi_n^2\cong n^{-2}$ and $\Delta  g_n\cong n^{-1}$.
In particular,
\begin{align*}
\log \Delta f_n &= -\sum_{t=2}^{n-1}\frac1t - \sum_{t=2}^{n-1}\frac{2\delta_t - \frac1t\sum_{i=1}^t \delta _i}{2mt+\sum_{i=1}^t \delta _i } +O(1)\\
&=  -\log n - c\int_{2}^{n}\frac{dx}{x(\log x)^{\alpha}} +O\left(1+\int_{2}^{n}\frac{dx}{x(\log x)^{1+\alpha}} \right)\\
&=  -\log n - c\int_{2}^{n}\frac{dx}{x(\log x)^{\alpha}} +O(1).
\end{align*}

\noindent\textbf{Case $\alpha>1$.}
First, we study the case of $\alpha>1, c\in\dR$. We have $\Delta f_{n}\cong n^{-1}$ and thus 
\begin{align*}
\sum_{i=1}^{n}\E \Delta_{n,i} \cong \sum_{1\le i<j<k\le n} \frac{1}{i}\cdot \frac{1}{j}\cdot \frac{1}{k} \cong (\log n)^{3}.
\end{align*}
Moreover, we get $S_n(i) \cong {(\log n)^2}/{i}$ for all $i$. This behavior is the same as the $\delta =0$ case.\\

\noindent\textbf{Case $\alpha=1$.}
Next, we consider the $\alpha=1, c\neq 0$ case. We have $\Delta f_{n}\cong n^{-1}(\log n)^{-c}$. Thus, 
\begin{align*}
\sum_{i=1}^{n}\E \Delta_{n,i} \cong \sum_{1\le i<j<k\le n} \frac{(\log i)^c}{i}\cdot \frac{1}{j}\cdot \frac{1}{k(\log k)^c}.
\end{align*}
Depending on whether $c>-1$, $c=-1$, or $c<-1$, we get different behavior for the sum. For $c>-1$, we get $(\log n)^{3}$; for $c=-1$, we get $(\log n)^{3}\log \log n$; and for $c<-1$, we get $(\log n)^{2-c}$.
Moreover, we have
\[
S_n(i) \cong
\begin{cases}
\frac{(\log i)^2}{i}, & c>2,\\
\frac{(\log i)^2}{i}\cdot (\log\log n-\log\log i_0), & c=2, \\
\frac{(\log i)^c}{i}\cdot (\log n)^{2-c}, & c<2.
\end{cases}
\]

\noindent\textbf{Case $\alpha<1$.}
Finally, we fix $\alpha\in(0,1)$.  We have 
\[
\Delta f_{n}\cong \frac1n \exp\left(-\frac{c}{1-\alpha}\cdot (\log n)^{1-\alpha}\right).
\]
Thus,
\begin{align*}
S_{n}(i) &=\sum_{i<j< k\le n} \Delta  h_i\cdot \Delta  g_j\cdot \Delta f_k\\
&\cong \sum_{i<j< k\le n} \frac1{ijk}\cdot \exp\left(-\frac{c}{1-\alpha}\cdot (\log k)^{1-\alpha} +\frac{c}{1-\alpha}\cdot (\log i)^{1-\alpha} \right)\\
&\cong \sum_{i< k\le n} \frac{\log(k/i)}{ik}\cdot \exp\left(-\frac{c}{1-\alpha}\cdot (\log k)^{1-\alpha} +\frac{c}{1-\alpha}\cdot (\log i)^{1-\alpha} \right)
\end{align*}
First, we focus on the case $c>0$. We have
\begin{align*}
i\cdot S_{n}(i) 
& \cong  \int_{i}^{n}  \frac{\log(x/i)}{x} \exp\left( - \frac{c}{1-\alpha} ((\log x)^{1-\alpha} - (\log i)^{1-\alpha})\right)\!dx\\
&= \int_{0}^{\log(n/i)}  u\cdot \exp\left( - \frac{c}{1-\alpha} ((u+\log i)^{1-\alpha} - (\log i)^{1-\alpha})\right)\!du
\cong (\log i)^{2\alpha}.
\end{align*}
Here, we used the fact that for $b,u>0,\alpha\in (0,1)$, we have
\[
\frac{u}{b^\alpha+u^\alpha} \le  \int_0^u (b+x)^{-\alpha} dx =\frac{(b+u)^{1-\alpha}-b^{1-\alpha}}{1-\alpha}
\le \frac{u}{b^\alpha}.
\]
Thus, with $b=\log i\ge 1$, the integral satisfies
\[
b^{2\alpha}\int_{0}^{(\log n -b)/b^\alpha}  u e^{-cu}du
\le
\int_{0}^{\log n -b}  u \exp\left( - c\cdot \frac{(b+u)^{1-\alpha} - b^{1-\alpha}}{1-\alpha} \right)\!du
\le b^{2\alpha}\int_{0}^{\infty}  u \exp\left( -  \frac{cu}{1+u^\alpha} \right)\!du.
\]
In particular, we get for $c>0$,
\[
S_n(i)
\cong \frac{1}{i}
(\log i)^{2\alpha}
\]
and 
\[
\sum_{i=1}^{n}\E \Delta_{n,i} \cong (\log n)^{1+2\alpha}.
\]
For $c<0$, we get 
\[
f_n\cong F(\log n)\cong (\log n)^\alpha \exp(-c/(1-\alpha)\cdot (\log n)^{1-\alpha})  
\]
where
\begin{align*}
    F(x) &=\int_0^x \exp(-c/(1-\alpha) y^{1-\alpha})dy
\\
&=\exp(-c/(1-\alpha) x^{1-\alpha})\int_0^x \exp(c/(1-\alpha) (x^{1-\alpha}-(x-u)^{1-\alpha}))du\\
&\cong \exp(-c/(1-\alpha) x^{1-\alpha})\ (\log n)^\alpha \text{ when } x\gg1
\end{align*}
and
\begin{align*}
    \E\Delta_{n,i}
    = \Delta  h_i\cdot\sum_{i<j< k\le n}  \Delta  g_j\cdot \Delta f_k
    &= \Delta  h_i\cdot\sum_{i<j< n} \Delta  g_j\cdot (f_n - f_j)\\
    &= \Delta  h_i\cdot \biggl(f_n\sum_{i<j< n} \Delta  g_j -\sum_{i<j< n} f_j\Delta  g_j\biggr)
    \cong \Delta  h_i \cdot f_n\cdot \log n.
\end{align*}
Note that, $\Delta  h_i\cong \frac{1}{i}\exp(c/(1-\alpha)\cdot (\log i)^{1-\alpha})$ is summable when $c<0$. Thus we get
\[
\sum_{i=1}^{n}\E\Delta_{n,i} \cong f_n\cdot \log n \cong (\log n)^{1+\alpha} \exp\left(-c/(1-\alpha)\cdot (\log n)^{1-\alpha}\right).
\]
This completes the proof.
\end{proof}
As a corollary of the proof, we get the following.
\begin{cor}\label{prop:phase}
As $n \to \infty$, the asymptotics of $S_n(i)$ are as follows.
\begin{itemize}
\item \textbf{Case $\alpha>1$.} $S_n(i) \cong \frac{(\log n)^2}{i}$ for all $c\in\dR$.
\item \textbf{Case $\alpha=1$.} 
\[
S_n(i) \cong
\begin{cases}
\frac{(\log i)^2}{i}, & c>2,\\
\frac{(\log i)^2}{i}\cdot (\log\log n-\log\log i), & c=2, \\
\frac{(\log i)^c}{i}\cdot (\log n)^{2-c}, & -\infty<c<2.
\end{cases}
\]

\item \textbf{Case $0<\alpha<1$.}
\[
S_n(i) \cong 
\begin{cases}
\frac{(\log i)^{2\alpha}}{i}, &c>0,\\
\frac{1}{i}
\exp\!\big(\frac{|c|}{1-\alpha}\cdot [(\log n)^{1-\alpha}-(\log i)^{1-\alpha}]\big), & c<0.
\end{cases}
\]
\end{itemize}
\end{cor}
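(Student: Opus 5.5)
The corollary is read off from the representation used in the proof of Theorem~\ref{thm:phase_transition},
\[
S_n(i)=\E\Delta_{n,i}=\Delta h_i\sum_{i<j<n}\Delta g_j\,(f_n-f_j),
\]
together with three inputs:
\[
\Delta g_j\cong j^{-1},\qquad \Delta h_i=\frac{\E\psi_i^2}{\Delta f_{i+1}}\cong \frac{1}{i^2\,\Delta f_{i}},
\]
and the asymptotics of $\Delta f_k$ established there. The plan is to insert the closed form of $\Delta f_k$ in each regime, evaluate $f_n-f_j=\sum_{j<k\le n}\Delta f_k$ as an integral in $\log k$, and then sum over $j$ with weight $1/j$. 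Write $L_x:=\log x$.

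\textbf{Case $\alpha>1$.} Here $\Delta f_k\cong k^{-1}$ and $\Delta h_i\cong i^{-1}$, so $f_n-f_j\cong \log(n/j)$. This gives
\[
S_n(i)\cong \frac1i\sum_{i<j<n}\frac{\log(n/j)}{j}\cong \frac{(\log(n/i))^2}{i}.
\]
This is the right order when $i\le \sqrt n$, say. For $i$ close to $n$ the stated form $(\log n)^2/i$ should be read with $i$ fixed, or at least $i\le n^{1-\epsilon}$, and I will make that restriction explicit.

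\textbf{Case $\alpha=1$.} Now $\Delta f_k\cong k^{-1}L_k^{-c}$ and $\Delta h_i\cong i^{-1}L_i^{c}$. Substituting $u=\log k$ and $v=\log j$,
\[
S_n(i)\cong \frac{L_i^c}{i}\int_{L_i}^{L_n}\!\!\int_v^{L_n}u^{-c}\,du\,dv=\frac{L_i^c}{i}\int_{L_i}^{L_n}(u-L_i)\,u^{-c}\,du .
\]
The three subcases come from comparing $c$ with $2$, since $\int^{L_n} u^{1-c}\,du$ is convergent, logarithmic, or power-growing accordingly.
For $c>2$ the integral is dominated near $u=L_i$ and is $\cong L_i^{2-c}$, so $S_n(i)\cong L_i^2/i$.
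For $c=2$ it is $\cong \log(L_n/L_i)$, which gives the $\log\log n-\log\log i$ factor.
For $c<2$ it is $\cong L_n^{2-c}$ whenever $L_i\le L_n/2$.

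\textbf{Case $0<\alpha<1$.} For $c>0$ this is exactly the computation already carried out in the proof, which gave $S_n(i)\cong L_i^{2\alpha}/i$. For $c<0$ we have $\Delta h_i\cong i^{-1}\exp(\frac{c}{1-\alpha}L_i^{1-\alpha})$, and the integral $F$ analysed in the proof is now dominated by its upper endpoint. The same argument gives
\[
f_n-f_j\cong L_n^{\alpha}\exp\!\Big(\tfrac{|c|}{1-\alpha}L_n^{1-\alpha}\Big)
\]
as long as $j\le n^{1/2}$.

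\textbf{Main obstacle.} This last regime is where I expect the difficulty. After summing over $j$, the product $\Delta h_i\cdot f_n\cdot\log n$ carries an extra factor $L_n^{1+\alpha}$ that is absent from the stated formula. So I would first recheck whether the intended normalization absorbs polynomial-in-$\log n$ factors into $\cong$, or whether the estimate $f_n-f_j$ should instead be taken localized near $k\approx n$. My first attempt is to redo the Laplace-type evaluation of $\int_{L_j}^{L_n}\exp(\frac{|c|}{1-\alpha}u^{1-\alpha})\,du$ with the substitution $u=L_n-w$. This gives a factor $L_n^{\alpha}$, and summing over $j$ gives $L_n$. I would then state the result up to these logarithmic factors if they do not cancel.
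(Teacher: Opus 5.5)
Your approach is the paper's: insert the regime-wise asymptotics of $\Delta f_k$ into $S_n(i)=\Delta h_i\sum_{i<j<n}\Delta g_j\,(f_n-f_j)$ and evaluate the sums as integrals in $\log k$. Your integral $\int_{L_i}^{L_n}(u-L_i)u^{-c}\,du$ for $\alpha=1$ supplies a computation the paper only states. The restrictions you place on $i$ are genuinely needed. For $\alpha>1$ the true order is $(\log(n/i))^2/i$, and every displayed formula requires something like $\log i\le(1-\epsilon)\log n$. So the corollary's ``for all $i$'' should be read in that sense.

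On your ``main obstacle'', you should not hedge. Nothing cancels, and since $\cong$ means equality up to constants, it cannot absorb $(\log n)^{1+\alpha}$. Your computation is exactly what the paper's own proof of Theorem~\ref{thm:phase_transition} produces. There one has $\E\Delta_{n,i}\cong\Delta h_i\,f_n\log n$ with $f_n\cong(\log n)^{\alpha}\exp\bigl(\frac{|c|}{1-\alpha}(\log n)^{1-\alpha}\bigr)$, that is,
\[
S_n(i)\cong\frac{(\log n)^{1+\alpha}}{i}\exp\Bigl(\frac{|c|}{1-\alpha}\bigl[(\log n)^{1-\alpha}-(\log i)^{1-\alpha}\bigr]\Bigr).
\]
The corollary's display simply drops this factor. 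A consistency check confirms it. Since $\sum_i\Delta h_i<\infty$, summing the displayed formula over $i$ gives only $\exp\bigl(\frac{|c|}{1-\alpha}(\log n)^{1-\alpha}\bigr)$. This contradicts the theorem's $(\log n)^{1+\alpha}\exp\bigl(\frac{|c|}{1-\alpha}(\log n)^{1-\alpha}\bigr)$. So state the corrected formula as your result.

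There is one further caveat about the input you import for this same case. Set $\lambda_t=(\log t)^{-\alpha}$. The exact expansion is $-\log\E(1-\psi_t)^2=\frac1t\bigl(1+\frac{c\lambda_t}{1+c\lambda_t}\bigr)+O\bigl(\frac{\lambda_t}{t\log t}+t^{-2}\bigr)$. The closed form of $\Delta f_k$ you take from the paper keeps only $\frac1t(1+c\lambda_t)$. The discarded term $\frac{c^2\lambda_t^2}{t(1+c\lambda_t)}$ is summable only if $\alpha>1/2$.

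Consequently, for $0<\alpha\le 1/2$ and $c<0$, the exponent must be $\int_{\log i}^{\log n}\frac{|c|u^{-\alpha}}{1-|c|u^{-\alpha}}\,du$. This exceeds $\frac{|c|}{1-\alpha}[(\log n)^{1-\alpha}-(\log i)^{1-\alpha}]$ by an unbounded amount, about $c^2\log(\log n/\log i)$ at $\alpha=1/2$. Your Laplace step $f_n-f_j\cong(\log n)^\alpha e^{(\cdots)}$ goes through unchanged with the corrected exponent.

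The caveat is harmless elsewhere. For $c>0$, the bounds $\frac{c}{1+c}u^{-\alpha}\le\frac{cu^{-\alpha}}{1+cu^{-\alpha}}\le cu^{-\alpha}$ for $u\ge1$ sandwich the exponent, and the paper's bounds still give $(\log i)^{2\alpha}/i$. For $\alpha\ge1$, the discarded term is summable.
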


\begin{rem}\label{rem:scaling}
    In light of an Open Question~\ref{ques:phase_trans}, the computations above actually suggest the correct scaling in each regime. After considering the same centered edge decomposition as in~\eqref{eq:centered_decomp} we expect the correct scaling for $\Delta_n=\sum_{i=1}^{n-2}\Delta_{n,i}$ (given by~\ref{term:central0}) to be $(\log n)^{2-c}$ when $\alpha=1,c<2$. However, the scaling for $T_{n}^{(3)}$ is $\sqrt{\E T_n}\cong \sqrt{\E \Delta_n} \cong (\log n)^{3/2}$ for $\alpha=1, c>-1$. Thus the term dominating the fluctuation contribution changes from $\Delta_n$ to $T_{n}^{(3)}$ at $\alpha=1,c=1/2$ when $2-c=3/2$.
\end{rem}

\section{Closing remarks and simulations}\label{sec:closing}
\subsection{Interpretation of the limiting distribution}\label{sec:inerpr_lim}
For $\delta <0$, the limiting random variable from Theorem~\ref{thm:main}
\[
\sum_{i=1}^\infty b_{i}\cdot \frac{\psi_i^2}{\norm{\psi_i}_2^2}\prod_{j=i+1}^\infty \frac{(1-\psi_j)^2}{\norm{1-\psi_j}_2^2},
\]
can be written as $RS$, where 
\[
R=\prod_{i=2}^\infty (1-\psi_j)^{2}\norm{1-\psi_j}_2^{-2}\qquad \text{ and }\qquad S=1+\sum_{i=2}^\infty \psi_i^2 \prod_{j=2}^{i} (1-\psi_j)^{-2}.
\]

Our interpretation of the quantities $R$ and $S$ is conjectural, supported by empirical evidence. One way of making it rigorous requires the adaptation of the graphical computation lemma to the form that allows to restrict the count subgraphs to those that contain particular vertex $i$ as the oldest vertex (in the sense that in~ \eqref{eq:graphic_fla} $i_1$ is treated as a fixed parameter and the sum is only over $i_j$ for $j\ge2$) and following the rest of our argument.

Let $T_{n,i}$ be the number of triangles with the oldest vertex being $i$. Then, we conjecture that
\begin{equation}\label{eq:T_ratio}
    \left(\frac{T_{n,i}}{T_{n,1}}\right)_{i\ge 2}\xrightarrow{\text{\,\,\,d\,\,\,}}\left(\psi_i^2\prod_{j=2}^{i}(1-\psi_j)^{-2}\right)_{i\ge 2} \text{ as } n\to\infty,
\end{equation}
and that 
\begin{equation}\label{eq:T_ratio2}
    \frac{T_{n,1}}{\tau_n}\xrightarrow{\text{\,\,\,d\,\,\,}}R \quad n\to\infty,
\end{equation}
where $\tau_n:=m^2(m-1)\cdot \beta/(1-2\beta)\cdot n^{1-2\beta}\log n$.
\begin{figure}[htb]
\centering
\includegraphics[width=\linewidth]{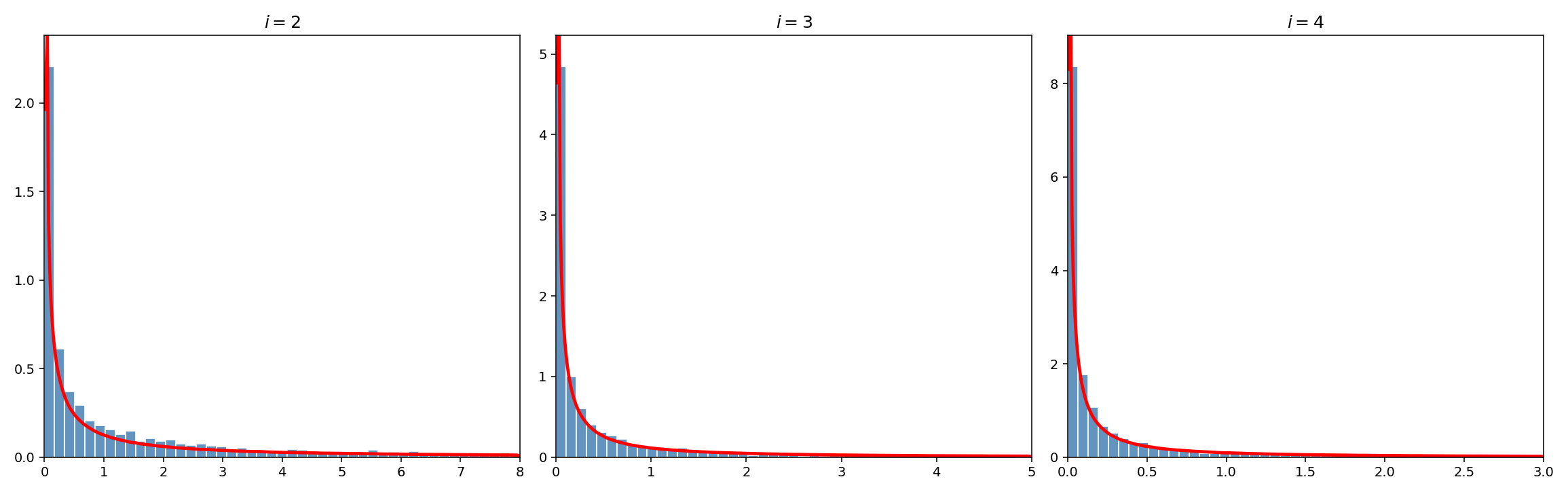}
\caption{This simulation consists of $5000$ samples of $\pam(m,\delta )$ with $n=10^6$, $m=2$, and $\delta=-1$. The figure depicts overleaped histograms of $T_{n,i}/T_{n,1}$ for $i=2,3,4$ [in blue] and the conjectured limiting distribution given in the right-hand side of~ \eqref{eq:T_ratio} [in red], \label{fig:T_ratios}}
\end{figure}
\begin{oques}\label{ques:ratio}
    Verify the distributional convergence from \eqref{eq:T_ratio} and \eqref{eq:T_ratio2} and show that all these random variables converge jointly to recover the limiting random variable from Theorem~\ref{thm:main}.
\end{oques}

\subsection{Stability of results in other variants}

As mentioned in the introduction, different variants of linear preferential attachment often yield comparable mean behavior across many statistics of interest. We already saw an example of this in Theorem~\ref{thm:ET} (proved in Section~\ref{sec:graphical_comp}). Indeed, the original results of Bollob\'as and Riordan~\cite{BollobasRiordan} for $\delta=0$ were derived for the variant with instantaneous attachment, that is, where all edges of the $n$-th vertex attach to the existing graph with probabilities given by
\begin{equation}\label{iLPAM}
 \pr(e_{k,\ell+1} \text{ connects to } v)=
 \frac{\deg_{k-1}(v)+\delta}{2m(k-2) + (k - 1)\delta};
\end{equation}
whereas for $\delta>0$, Eggemann and Noble~\cite{EggemannNoble} worked with a different sequential attachment model, in which the graph is constructed from a preferential-attachment tree, that is, each incoming vertex has a unique outgoing edge, and then blocks of $m$ vertices are collapsed.

In contrast, it is not clear whether this remains true for distributional convergence, or even for the order of fluctuations. We present empirical data for the (scaled and centered) triangle counts in the instantaneous-attachment variant. For $\delta>0$, Figure~\ref{fig:clt_del_pos} is consistent with the Gaussian limit; however, it is likely that the rate of convergence is of order $1/ \log n$ (similar to the case~\ref{thm:main_delta_pos} of Theorem~\ref{thm:main}) and thus these simulations do not rule out a non-Gaussian limit.

\begin{figure}[htb]
\centering
\begin{subfigure}[b]{0.3\textwidth}
  \centering
  \includegraphics[width=\linewidth]{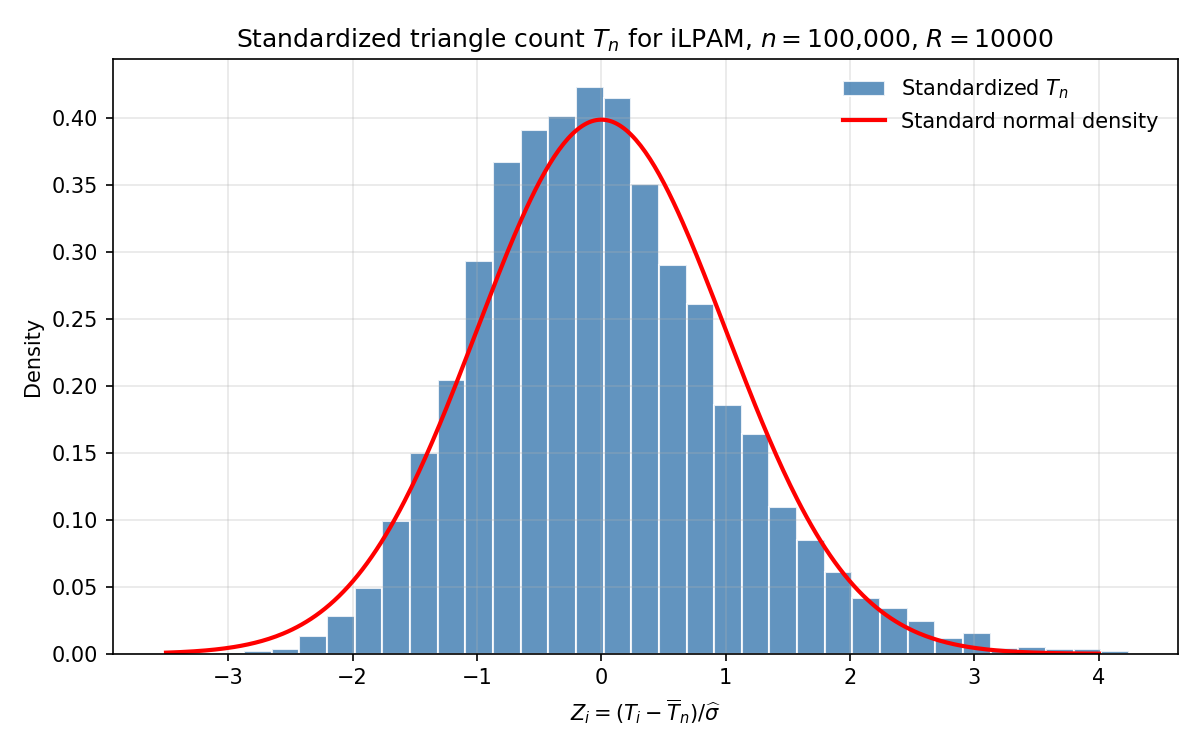}
\end{subfigure}
\hfill
\begin{subfigure}[b]{0.3\textwidth}
  \centering
  \includegraphics[width=\linewidth]{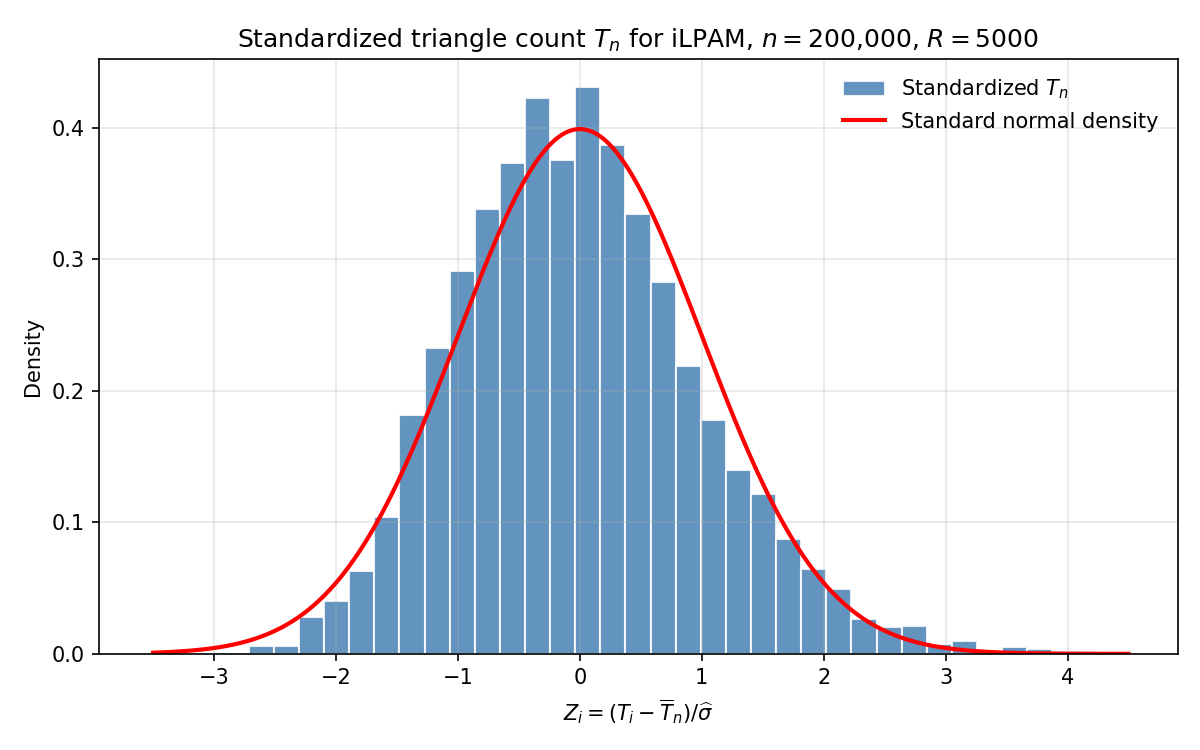}
\end{subfigure}
\hfill
\begin{subfigure}[b]{0.3\textwidth}
  \centering
  \includegraphics[width=\linewidth]{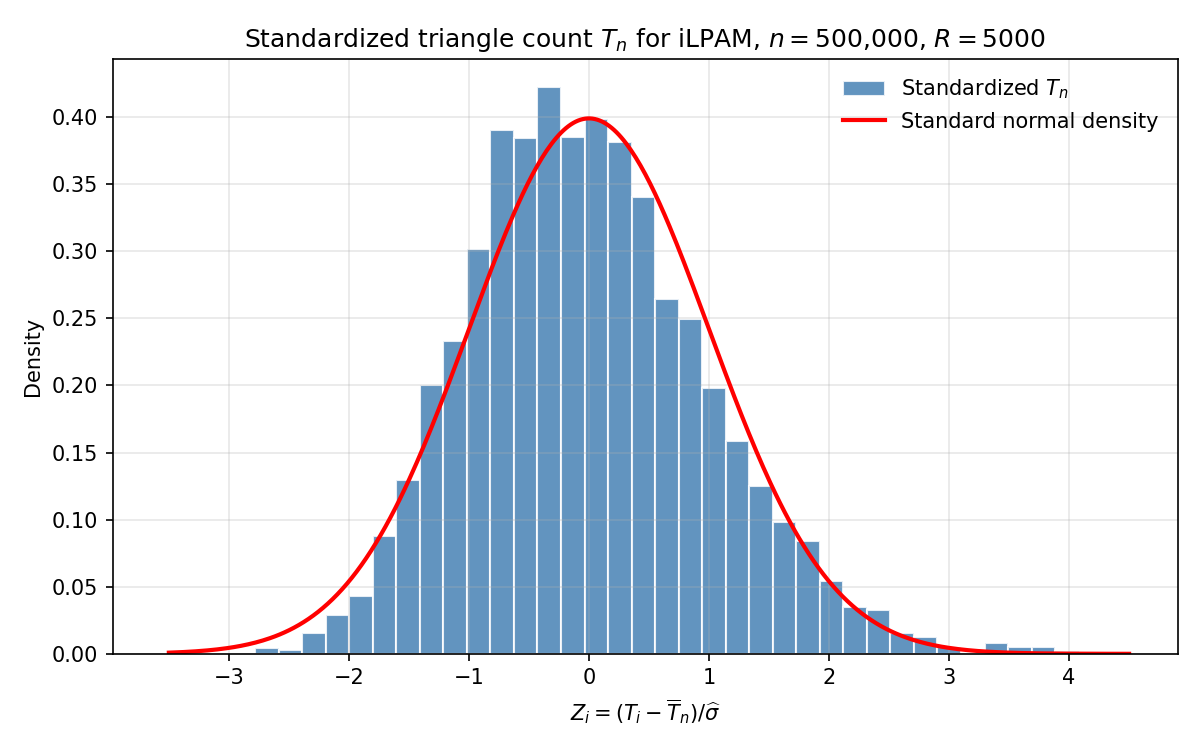}
\end{subfigure}

\par\medskip
\begin{subfigure}[b]{0.30\textwidth}
  \centering
  \includegraphics[width=\linewidth]{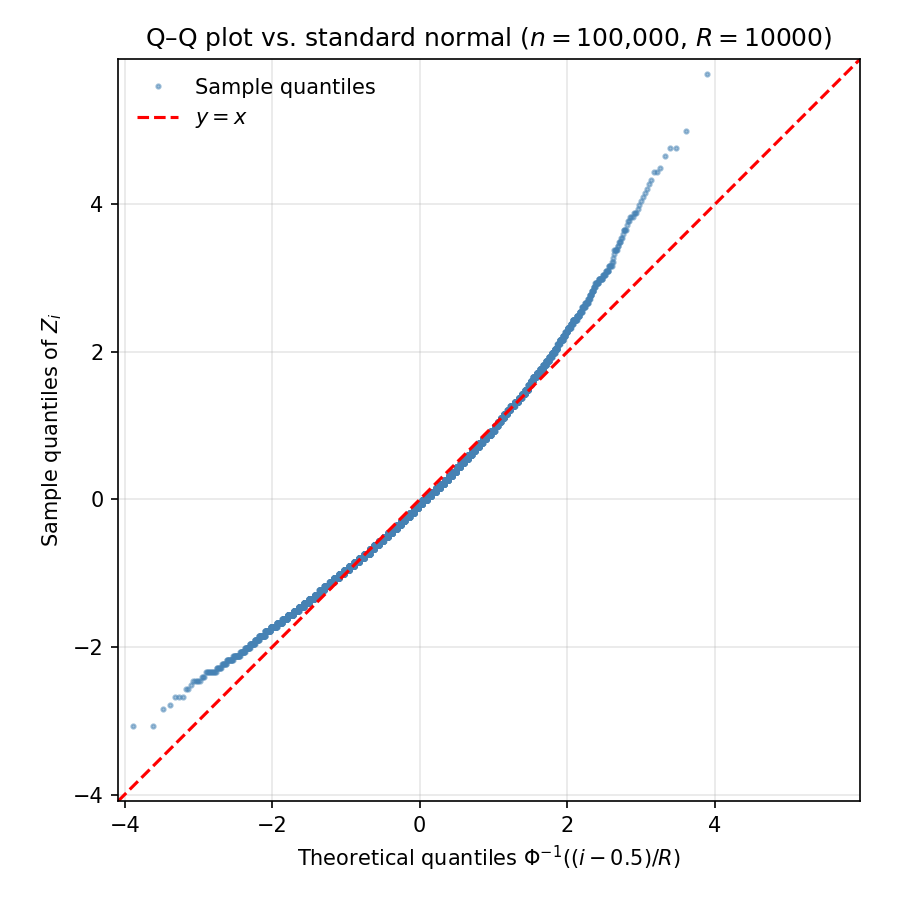}
\end{subfigure}
\hfill
\begin{subfigure}[b]{0.30\textwidth}
  \centering
  \includegraphics[width=\linewidth]{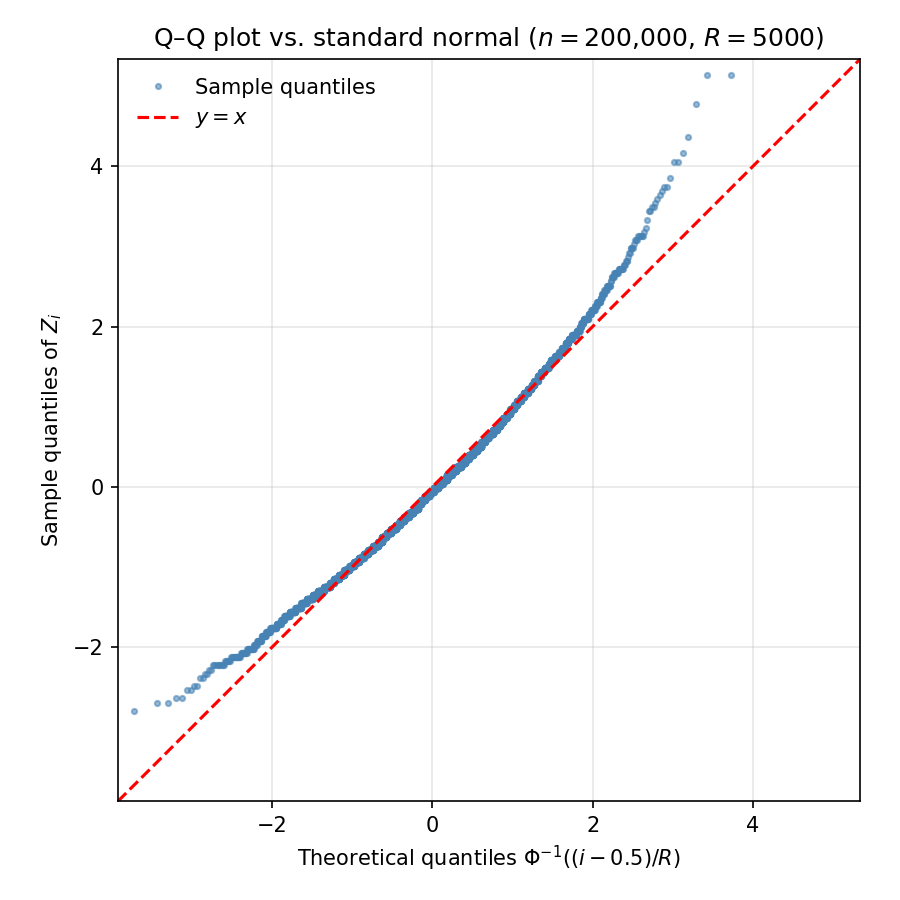}
\end{subfigure}
\hfill
\begin{subfigure}[b]{0.30\textwidth}
  \centering
  \includegraphics[width=\linewidth]{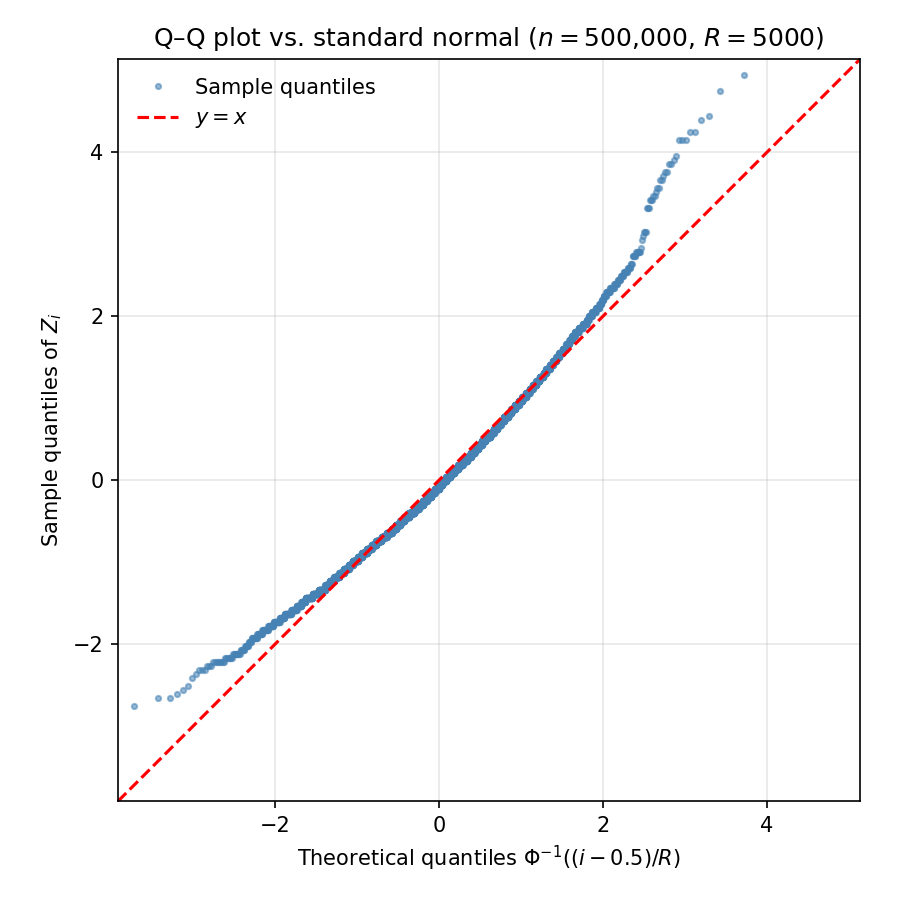}
\end{subfigure}

\caption{Histograms and QQ plots for centered and scaled $T_n$ in the instantaneous variant of the linear preferential attachment model with $m=2$ and $\delta=1$. Here $n=10^5$ [left], $n=2\cdot 10^5$ [middle], and $n=5\cdot 10^5$ [right] denote the number of vertices, while $R=10000$ [left] and $R=5000$ [middle and right] denote the corresponding number of samples.}
\label{fig:clt_del_pos}
\end{figure}
\newpage
\begin{conj}
     Let $m\ge2$ and $\delta>0$ be fixed. Let $T_n$ be the number of triangles in the instantaneous variant of the linear preferential attachment model,  that is, the connections are as in~\eqref{iLPAM}. Then 
 \begin{align*}
 \dwas\left(\frac{T_n-\E T_n}{\sqrt{\var(T_n)}}, Z\right)\lesssim \frac{1}{\sqrt{\log n}},
 \end{align*}
 where $Z\sim \N(0,1)$.
\end{conj}

In general, it is unlikely that the limiting distributions from parts~\ref{thm:main_delta_zero} and~\ref{thm:main_delta_neg} of Theorem~\ref{thm:main} are universal. We believe that it is likely that for other variants the broader structural behavior is similar, \ie~that the dominating contribution comes from triangles where one vertex is old, and the rest were recently added, while the exact distribution differs both for the total number of triangles and for quantities such as \eqref{eq:T_ratio}. Surprisingly, empirical data do not suggest a significant difference between the instantaneous and sequential attachment variants of LPAM.

\begin{figure}[htb]
\centering
\includegraphics[width=\linewidth]{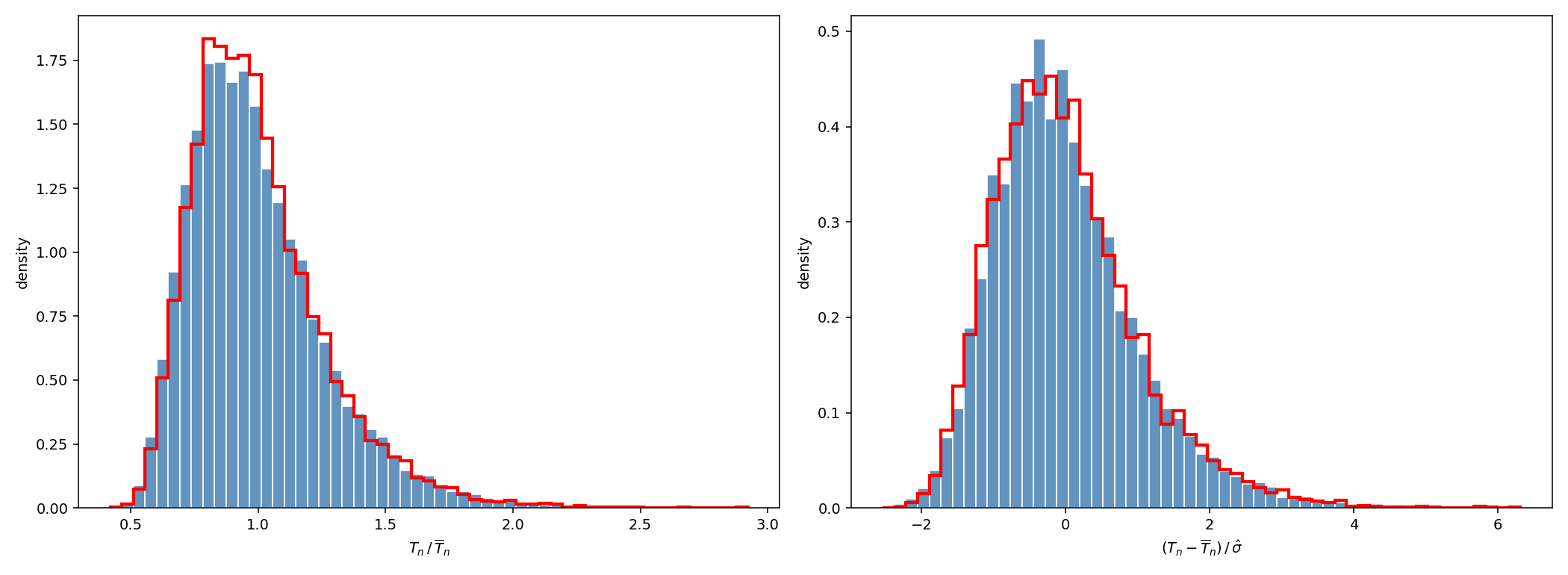}
\caption{Comparison between histogram for triangle counts in the instantaneous [red line] and sequential [in blue bars] attachment variants of LPAMs after appropriate scaling and centering following Theorem~\ref{thm:main}. The left figure is for $\delta=-1$ while the right one is for $\delta=0$. All simulations were performed for $n=10^5$, $m=2$, and $10^4$ samples for each model. \label{fig:clt_del_nonpos}}
\end{figure}
\begin{oques}\label{ques:clt_del_nonpos}
    Find the exact distributional limit for the instantaneous linear preferential attachment model for $\gd\le0$. 
\end{oques}

\subsection{Other open questions}

\begin{oques}\label{ques:cycle}
    Derive distributional limits for the number of cycles of fixed length $2k$ or $2k+1$, which we conjecture to be Gaussian when $\delta >0$ and for $\delta <0$ to be given by
    \[ \sum_{1\le i_1<i_2<\cdots<i_k}b_{i_k}\prod_{j=1}^k\left(\psi_{i_j}^2\prod_{t=i_j+1}^{i_{k}}(1-\psi_t)^2\right)\prod_{t=i_{k}+1}^\infty\frac{(1-\psi_t)^{2k}}{\E(1-\psi_t)^{2k}},\]
    where 
    \[
\frac{1}{b_{i_k}}=\E\prod_{j=1}^k\left(\prod_{t=j+1}^{i_{k}}(1-\psi_t)^2\right).
    \]
\end{oques}

\begin{oques}\label{ques:cliques}
    Derive distributional limits for the number of cliques of fixed size $(k+1)$ for $\delta <0$. We conjecture it to be 
     \[ \sum_{i=1}^\infty \frac{\psi_{i}^k}{\prod_{t=2}^{i}\E(1-\psi_t)^k}\cdot \prod_{t=i+1}^{\infty}\frac{(1-\psi_t)^{k}}{\E(1-\psi_t)^{k}}.\]
\end{oques}

\begin{oques}\label{ques:phase_trans}
    Analyze the continuity of the phase transition, analogous to Theorem~\ref{thm:phase_transition}, for 
    \begin{enumerate}
        \item the variance of $T_n$, which we conjecture fixing $\alpha=1$ it occurs at $c=1/2$, see Remark~\ref{rem:scaling};
        \item the distributional limit for $T_n$;
        \item means of other subgraphs, such as cycles or cliques of fixed size.
    \end{enumerate}
\end{oques}

\begin{oques}\label{ques:ldp}
    Derive a large deviation principle for triangles in the linear preferential attachment models and corresponding structural phase transitions.
\end{oques}

\section*{Acknowledgments}

We would like to thank Remco van der Hofstad for introducing the second author to the topic and pointing towards the question of establishing the limiting distribution of triangle counts in linear PAM. We thank Shankar Bhamidi for many insightful conversations and his support. We also thank Anant Raj for helpful discussions at the beginning of the project.
The first author was supported in part by CRB Grant RB23016.
The second author was supported in part by the RTG award grant (DMS-2134107) from the NSF and by the ERC Synergy Grant No. 810115 - DYNASNET. Simulations in Figures~\ref{fig:T_ratios}--\ref{fig:clt_del_nonpos} were done using Claude's Opus 4.7 engine. 

\appendix
\section{Edge and triangle counts in RGIV}\label{sec:appendix_RGIV}
In this Section, we briefly consider a different model, the random graph model with immigrating vertices, introduced by Aldous and Pittel in~\cite{RGIV}, to highlight the use of the centered-edge decomposition and the competition between terms in the context of distributional convergence for small subgraph counts. In this model, the graph is also constructed in continuous time as follows.
\begin{algorithmic}
 \State \textbf{Initial data:} $\lambda:=\lambda(t):(0,\infty)\to(0,\infty)$, $G_0=\emptyset$.
 
 \State \textbf{Vertices:} The vertices arrive according to a Poisson process with rate $\lambda$.
 
 \State \textbf{Edges:} The edge arrival process between two vertices $i<j$ already present in the graph is given by i.i.d.~Poisson processes of rate $1/\lambda$.
\end{algorithmic}
  We observe the graph at time $t$ where $\lambda\to\infty, \lambda t \to \infty$.

\subsection{Edge count}
Let $N_t$ be the number of vertices at time $t$ with $N_t\sim \poi(\lambda t).$ Let $\cE_t$ be the number of edges at time $t$, respectively. Define
 \[
        \theta:={t}/{\lambda},\qquad \mu_t:=\E \cE_t\text{ and } \gs_t^2=\var(\cE_t).
\] 
Throughout the section, we assume that $\lambda\to\infty$. We will prove the following.

\begin{thm}[Limit Theorems for the edge count]\label{thm:RGIV_E}
    The following convergences hold as $\lambda\to\infty$. 
    \begin{enumerate}[label=\upshape (\roman*)]
        \item\label{thm:RGIV_E_Poi}
        When $\lambda t^{3}\to c\in (0,\infty)$, we have 
        \[
        \dtv(\cE_t,\poi(c/6))\lesssim t/\lambda + t(t+\sqrt{\lambda t^3}) + \abs{\lambda t^{3}-c}.
        \]
       
        \item\label{thm:RGIV_E_N1} When $\lambda t^3\to\infty$ and $t=O(\lambda)$, we have 
        \[
        \dwas\left((\cE_t-\mu_t)/\gs_t,\N(0,1)\right)\lesssim \frac{1}{\gs_{t}} 
        \]
        where
        \[
        \gs_{t}^2=(1+o(1))\cdot \begin{cases}
            \frac{1}{6}\lambda t^3 + \frac{2}{15}\lambda t^5 &\text{ when } t\ll \lambda\\
            C_\theta\cdot\lambda^3 t^3 &\text{ when }t\approx \lambda,
        \end{cases}
        \]
        for a constant $C_{\theta}$ depending on $\theta=t/\lambda$.
    \end{enumerate}
\end{thm}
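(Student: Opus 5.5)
The plan is to exploit the conditional structure of the model, exactly as the P\'olya urn representation was used for $\pam(m,\delta)$. Given the vertex arrival times $0<s_1<s_2<\cdots<s_{N_t}\le t$ (a Poisson process of rate $\lambda$ on $[0,t]$), the edge multiplicities between pairs $i<j$ are independent $\poi((t-s_j)/\lambda)$. Hence $\cE_t$ is conditionally Poisson with random mean
\[
\Lambda_t:=\frac1\lambda\sum_{i<j}(t-s_j)=\frac{1}{2\lambda}\sum_{i\neq j}h(s_i,s_j),\qquad h(s,u):=t-\max\{s,u\}.
\]
This gives the centered-edge decomposition $\cE_t-\mu_t=(\cE_t-\Lambda_t)+(\Lambda_t-\E\Lambda_t)$. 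The two summands are uncorrelated, so $\gs_t^2=\E\Lambda_t+\var(\Lambda_t)$.

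\textbf{Step 1: moments.} By the Mecke formula,
\[
\E\Lambda_t=\frac{\lambda^2}{2\lambda}\int_0^t\!\!\int_0^t h\,ds\,du=\lambda\int_0^t s(t-s)\,ds=\frac{\lambda t^3}{6}.
\]
The variable $\Lambda_t$ is a second-order $U$-statistic of a Poisson process, so I will use its Wiener--It\^o (Hoeffding) decomposition. The first-chaos kernel is
\[
g(s)=\int_0^t h(s,u)\,du=\frac{t^2-s^2}{2},
\]
which contributes
\[
\lambda\int_0^t g(s)^2\,ds=\frac{2}{15}\lambda t^5 .
\]
The second chaos contributes $\frac{1}{2\lambda^2}\lambda^2\int\!\!\int h^2\cong t^4$. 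This recovers $\gs_t^2=(1+o(1))\bigl(\frac16\lambda t^3+\frac2{15}\lambda t^5\bigr)$ when $t\ll\lambda$; note that $t^4\ll\lambda t^5$ iff $t\ll \lambda t^2$, which always holds here. When $t\approx\lambda$, every term is of order $\lambda^3t^3$ (with $\theta=t/\lambda$ fixed), so all terms are collected into the constant $C_\theta$.

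\textbf{Step 2: Poisson regime.} Since $\cE_t$ is conditionally $\poi(\Lambda_t)$,
\[
\dtv(\cE_t,\poi(c/6))\le \E\,\dtv(\poi(\Lambda_t),\poi(c/6))\le \E|\Lambda_t-c/6|\le \sqrt{\var\Lambda_t}+\abs{\lambda t^3/6-c/6}.
\]
With $\lambda t^3\to c$, Step 1 gives $\sqrt{\var\Lambda_t}\lesssim \sqrt{\lambda t^5}+t^2\lesssim t(t+\sqrt{\lambda t^3})$. The $t/\lambda$ term absorbs the discrepancy between the Poisson-mixture bound and lower-order corrections, for example multiple edges or the conditioning on $N_t$.

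\textbf{Step 3: Gaussian regime.} I will bound the Wasserstein distance by conditioning twice. First, given the arrivals, the standard Stein bound for a Poisson variable gives
\[
\dwas\bigl((\cE_t-\Lambda_t)\mid\Lambda_t,\ \sqrt{\Lambda_t}Z\bigr)\lesssim 1 .
\]
Second, $\Lambda_t-\E\Lambda_t$ is a Poisson $U$-statistic. For it I will apply the Malliavin--Stein fourth-moment/contraction bounds of Peccati--Reitzner type for Poisson $U$-statistics, which give a rate of order $1/\sqrt{\var\Lambda_t}$ because the kernel $h$ is bounded by $t$ and the contractions can be computed explicitly. Finally, as in the $\delta>0$ PAM argument, I will replace $\sqrt{\Lambda_t}Z$ by $\sqrt{\E\Lambda_t}Z$ at cost
\[
\E\bigl|\sqrt{\Lambda_t}-\sqrt{\E\Lambda_t}\bigr|\lesssim \sqrt{\var\Lambda_t}/\sqrt{\E\Lambda_t}.
\]
Adding the independent Gaussians then gives the stated $1/\gs_t$ rate after normalization.

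\textbf{Main obstacle.} I expect the hardest step to be that last replacement in the intermediate regime where $\var\Lambda_t$ and $\E\Lambda_t$ are comparable. There the cost $\sqrt{\var\Lambda_t/\E\Lambda_t}$ divided by $\gs_t$ is not obviously $O(1/\gs_t)$. If it fails, I would instead run a single Malliavin--Stein argument on the full marked Poisson process (vertices together with edge marks). In that formulation $\cE_t$ is a Poisson functional with explicit first and second add-one costs, and the second-order Poincar\'e inequality yields the $1/\gs_t$ rate directly, including the regime $t\approx\lambda$.
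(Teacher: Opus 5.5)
The genuine gap is in Step 3, where you assert that the Peccati--Reitzner bounds give the rate $1/\sqrt{\var\Lambda_t}$ ``because $h\le t$''. They do not. The first-chaos (third-moment) term of the Malliavin--Stein bound is
\[
\frac{\lambda\int_0^t g(s)^3\,ds}{(\var\Lambda_t)^{3/2}}=\frac{\frac{2}{35}\lambda t^7}{\bigl(\frac{2}{15}\lambda t^5\bigr)^{3/2}}\,(1+o(1))\asymp\frac{1}{\sqrt{\lambda t}},
\]
which is much larger than $1/\gs_t\asymp(\lambda t^5)^{-1/2}$ once $t\to\infty$.

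This is not an artifact of the method. The kernel is nonnegative, so the Poisson diagram formula gives $\kappa_3(\Lambda_t)\ge\lambda\int_0^t g^3\,ds$ for the third cumulant. For your conditionally Poisson count, $\kappa_3(\cE_t)=\E\Lambda_t+3\var\Lambda_t+\kappa_3(\Lambda_t)$, and the Bernoulli count behaves the same way. Hence for $t\gtrsim1$ the skewness of $(\cE_t-\mu_t)/\gs_t$ is of order $(\lambda t)^{-1/2}$. Testing against a smooth $1$-Lipschitz $h$ with $\E h'''(Z)\neq0$ (e.g.\ $h=\sin$) shows that the Wasserstein distance is at least of that order, up to $O((\lambda t)^{-1})$ corrections. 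So the $1/\gs_t$ rate in (ii) cannot hold when $t\to\infty$; for $t\approx\lambda$ it would be a rate $\lambda^{-3}$ for a functional of $\approx\lambda^2$ vertices. Neither Step 3 nor your second-order Poincar\'e fallback can deliver it.

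Carried out correctly, your route gives $\dwas\lesssim 1/\gs_t+(\lambda t)^{-1/2}$, which matches the claim exactly when $t=O(1)$. Your diagnosis of the obstacle is also off. The mixing cost $\sqrt{\var\Lambda_t/\E\Lambda_t}$ is $O(1)$ whenever $t=O(1)$, including the regime $\var\Lambda_t\asymp\E\Lambda_t$. It grows like $t$ only when $t\to\infty$, and even then, divided by $\gs_t$, it is $(\lambda t^3)^{-1/2}$, dominated by the unavoidable $(\lambda t)^{-1/2}$.

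The paper's own sketch of (ii) does not produce a rate either. For $1\ll t$ it reduces to $L_t+D_t$, which are sums over $\approx\lambda t$ i.i.d.\ vertex contributions, and concludes via the SLLN, the classical CLT and characteristic functions. That gives convergence, at best at the Berry--Esseen scale $(\lambda t)^{-1/2}$, not $1/\gs_t$.

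The rest is close to the paper. Your part (i) is the paper's argument: conditional Poisson approximation, then $\dtv(\poi(a),\poi(b))\le|a-b|$ and an $L^2$ bound on the conditional mean. Your chaos decomposition is the Poisson-process version of the paper's $S_t+(L_t+D_t)+(R_{t,1}+R_{t,2})$, with $\lambda\int_0^t g^2=\frac2{15}\lambda t^5$ matching $\frac1{45}+\frac19$.

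However, the paper's $\cE_t=\sum_{i<j}\go_{ij}$ counts pairs joined by \emph{at least one} edge. Given $\cF_t$ it is a sum of independent $\mathrm{Bernoulli}(h_{ij})$ variables, not $\poi(\Lambda_t)$, and the $t/\lambda$ term in (i) is exactly the Stein--Chen error $\max_{i<j}h_{ij}\le\theta$. The discrepancy with your multigraph count is harmless in (i), where it is $O(t^4)$ in $L^1$, and when $t=O(1)$. For $t\approx\lambda$, though, the two variables differ at leading order: your $\mu_t=\lambda t^3/6$ is not $\frac12\lambda^2t^2H(\theta)$. There the conditional Poisson Stein step must be replaced by a Berry--Esseen bound for independent Bernoulli sums. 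Also, for $t\approx\lambda$ it is not true that every term is of order $\lambda^3t^3$: only the first chaos is, while $\E\Lambda_t$ and the second chaos are $O(\lambda^4)$.
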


We provide a skeleton of the proof of Theorem~\ref{thm:RGIV_E}, which crucially depends on the decomposition of $\cE_{t}$ into a sum of uncorrelated random variables. Depending on the assumption on $t=t_{\lambda}$, only a few terms will dominate.
First, we note that conditionally on $N_t=n$, the arrival times are distributed as the order statistics of $n$ many i.i.d.~$\mathrm{Uniform}(0,t)$ random variables. Let $\{\xi_{ij}\}_{i,j\ge1}$ be i.i.d.~$\textrm{Exp}(1)$ and $\{U_i\}_{i\ge1}$ be i.i.d.~$\mathrm{Uniform}(0,1)$ random variables, independent of each other and $N_t$.
The edge count process can be written as
$
\cE_t=\sum_{1\le i<j\le N_t} \ind_{\{\lambda\xi_{ij}\le \min(tU_i,tU_j)\}}.
$
Define $
\go_{ij} := \ind_{\xi_{ij} \le \theta (U_i \wedge U_j)}, 1\le i< j<\infty
$
so that
$
\cE_t = \sum_{1\le i<j\le N_t} \go_{ij}.
$
Define the functions 
\begin{equation*}
    h_\theta(u,v):=\pr(\xi\le \theta (u\wedge v))=1-e^{-\theta\min(u,v)}
\end{equation*}
and 
\begin{align*}
    H(\theta):=\E h_\theta(U_1,U_2)
&= \frac{2}{\theta^2}\bigl(1-\theta + \theta^2/2-e^{-\theta}\bigr)
= \frac\theta3(1+o(1)) \text{ when }\theta\ll1.
\end{align*}
Also, define the $\gs$-field $\cF_t$ generated by $\{N_t; U_i, 1\le i\le N_t\}$. Denote
\[
h_{ij}:=h_\theta(U_i,U_j) \text{ and }
\overline{\go}_{ij}:=\ind_{\xi_{ij}\le \theta (U_i\wedge U_j)}-h_{ij},
\]
Hence
\[
\cM_t:=\E(\cE_t\mid \cF_t)
= \sum_{1\le i<j\le N_t}h_{ij}.
\]
Using the fact that $\E N_t(N_t-1)=\lambda^2 t^2$, we have
\begin{align*}
    \mu_t=\E\cE_t
&=\frac12\lambda^2t^2H(t/\lambda)
= 
\begin{cases}
    \frac{1}{6}\lambda t^3\cdot (1+o(1))
&\text{ when }\theta\ll 1,\\
\frac12\lambda^2t^2 H(\theta) &\text{ when }\theta\approx 1.
\end{cases}
\end{align*}
In this case, instead of considering the centered-edge decomposition for $\cM_t$ conditioned on $N_t$ in the form of \eqref{eq:center_edge_intro}, we find it easier to work with the classical Hoeffding decomposition; nonetheless, it virtually accomplishes the same goal.
Since $\cM_t\mid N_t$ is a rank two U-statistic, and using Hoeffding's decomposition with $g_\theta(u):=\E h_\theta(U_1,u),$ we can write
\[
\cE_t-\mu_t = S_t+L_t+D_t+R_{t,1}+R_{t,2}
\]
where
\begin{align}
    S_t&:= \sum_{1\le i<j\le N_t}
\overline{\go}_{ij},\label{RGIV:cond_var2.0}\\
    L_t&:= (N_{t}-1) \sum_{i=1}^{N_t}\bigr(g_\theta(U_i)-H(\theta)\bigl),\label{RGIV:cond_var2.2}\\
    D_t&:=\lambda tH(\theta)\cdot (N_t-\lambda t),\label{RGIV:cond_var2.3}\\
    R_{t,1}&:=\sum_{1\le i<j\le N_t} \left(h_\theta(U_i,U_j) - g_\theta(U_i)- g_\theta(U_j) + H(\theta)\right),\label{RGIV:cond_var2.4}\\
    \text{and}\qquad R_{t,2}&:=\frac12H(\theta) ((N_t-\lambda t)^2-N_t).\label{RGIV:cond_var2.5}
\end{align}
The terms $R_{t,1}$ and $R_{t,2}$ are $O_P(t^2)$ as 
\begin{align*}
\E \var(R_{t,1}\mid N_t)
&= \E\binom{N_t}{2}\cdot \var(h_\theta(U_1,U_2) - g_\theta(U_1)- g_\theta(U_2))\\
&= \frac12\lambda^2t^2 \var(h_\theta(U_1,U_2) - g_\theta(U_1)- g_\theta(U_2))
\approx t^{4}
\end{align*}
and
\[
\E\abs{R_{t,2}}\le H(\theta)\lambda t\le t^2
\]
when $t=O(\lambda)$. Here, we used the fact that $h_\theta(u,v)=\theta(u\wedge v) +O(\theta^2)$ when $\theta\ll1$.

Now, for the centered Bernoulli sum $S_t$, the conditional variance can be written as
\[
\var(S_t\mid \cF_{t})=\sum_{1\le i<j\le N_t}(h_{2\theta}(U_i,U_j) - h_\theta(U_i,U_j)).
\]
Taking expectation 
 gives
\begin{align*}
\E \var(S_t\mid \cF_{t})
&= \E\binom{N_t}{2} \cdot \bigl(H(2\theta)-H(\theta)\bigr)\\
&= \E\frac{N_{t}(N_{t}-1)}{\lambda^{2}t^{2}}\cdot \frac{1}2\lambda^2 t^2\bigl(H(2\theta)-H(\theta)\bigr)
= \E \frac{N_{t}(N_{t}-1)}{\lambda^{2}t^{2}} \frac1{\theta}\bigl(H(2\theta)-H(\theta)\bigr)\cdot \frac{1}2\lambda t^3.
\end{align*}
Similarly,
\begin{align*}
    \E \var(L_t\mid N_t)
&= \var(g_\theta(U)) \E N_t(N_t-1)^2 
= (1+o(1))\cdot \begin{cases}
    \frac1{45}\lambda t^5 &\text{ when } t\ll \lambda,\\
    \lambda^3t^3\cdot \var(g_\theta(U)) &\text{ when } t\approx \lambda,
\end{cases}
\end{align*}
and
\begin{align*}
    \var(D_t) = H(\theta)^2 \lambda^3t^3
    = (1+o(1))\cdot \begin{cases}
    \frac19\lambda t^5 &\text{ when } t\ll \lambda,\\
    \lambda^3t^3\cdot H(\theta)^2 &\text{ when } t\approx \lambda.
\end{cases}
\end{align*}
For the limiting variance, what matters is whether $t\to\infty$ or not. Under our assumption, $\lambda$ is going to $\infty$, when $t=\Theta(1)$ both $S_t$ and $M_t$ terms contribute.

When $t\ll 1$, by the Stein-Chen method of Poisson approximation for the sum of independent Bernoulli random variables, conditional on $\cF_t$, we have
\[
\dtv(\cE_t,\poi(M_t))\le \max_{1\le i<j\le N_t}h_\theta(U_i,U_j)\le \theta,
\]
which is small since $t\ll 1\ll \lambda$.
Moreover, we have
\[
\dtv(\poi(M_t),\poi(\mu_t)))\le \E\abs{M_t - \mu_t} \le \norm{M_{t}-\mu_{t}}_{2}
\]
which is bounded by $O(t^{2}+t\sqrt{\lambda t^3})$. 

When $t=\Theta(1)$, the dominating terms are $S_{t}, L_t, D_t$ which jointly converge to independent Gaussian after proper scaling. When $1\ll t =O(\lambda)$, the dominating terms are~\eqref{RGIV:cond_var2.2} and~\eqref{RGIV:cond_var2.3}. The distributional limit follows from the Strong Law of Large Numbers, classical CLT for the sum of i.i.d.\ random variables, and the convergence of characteristic functions.

\subsection{Triangle count}

\begin{thm}[CLT for the triangle count]\label{thm:RGIV_T}
    Let $T_t$ be the number of triangles at time $t$.  The following convergences hold as $\lambda\to\infty$. 
    \begin{enumerate}[label=\upshape (\roman*)]
        \item\label{thm:RGIV_T_Poi}
        When $t\to c\in (0,\infty)$, we have 
        \[
        \dtv(T_t,\poi(c^6/(2\cdot 3^4)))\to 0.
        \]
       
        \item\label{thm:RGIV_T_N1} When $t\to\infty$ and $t=O(\lambda)$, we have 
        \[
        \dwas\left((T_t-\E T_t)/\sqrt{\var(T_t)},\N(0,1)\right)\to 0 
        \]
        where
        \[
        \var(T_t)\cong t^6(1+t^5/\lambda).
        \]
        \end{enumerate}
\end{thm}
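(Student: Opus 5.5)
Mirroring the edge-count analysis, I will condition on the environment $\cF_t=\sigma(N_t;U_i,\,i\le N_t)$ and write $T_t=\sum_{i<j<k\le N_t}\go_{ij}\go_{ik}\go_{jk}$. Given $\cF_t$, the indicators $\go_{ij}$ are independent Bernoulli$(h_{ij})$. I then use the centered-edge decomposition~\eqref{eq:center_edge_intro} with $\theta_{ij}$ replaced by $h_{ij}$:
\[
T_t-\E T_t=(\cM^{T}_t-\E T_t)+\wt T^{(1)}_t+\wt T^{(2)}_t+\wt T^{(3)}_t,
\qquad \cM^T_t:=\sum_{i<j<k\le N_t}h_{ij}h_{ik}h_{jk}.
\]
Since $h_{ij}\approx\theta(U_i\wedge U_j)$ and $\E N_t^3\approx(\lambda t)^3$, the mean is $\E T_t\approx \tfrac16(\lambda t)^3\theta^3\E[(U_1\wedge U_2)(U_1\wedge U_3)(U_2\wedge U_3)]=\tfrac16 t^6\cdot\tfrac{3}{5\cdot 27}$-type constant. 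I will compute this constant precisely and check that it matches $c^6/(2\cdot3^4)$.

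\emph{Variance orders.} The plan is to compute the variance of each piece, as in Lemmas~\ref{lem:Tn3}--\ref{lem: centering term}, using that pieces with no common edge are uncorrelated.
\begin{itemize}
\item For $\wt T^{(3)}_t$ the variance is $\E\sum h_{ij}h_{ik}h_{jk}(1+O(\theta))\cong t^6$.
\item For $\wt T^{(2)}_t$ the variance is $\lesssim\E\sum h_{ij}^2h_{ik}h_{jk}\cong\lambda^3t^3\theta^4=t^7/\lambda$.
\item For $\wt T^{(1)}_t$ the variance is $\lesssim\E\sum_{i<j}h_{ij}(\sum_k h_{ik}h_{jk})^2\cong\lambda^4t^4\theta^5=t^9/\lambda$.
\item For $\cM^T_t$, conditioning on $N_t$ gives a rank-three U-statistic, whose Hoeffding projection onto single coordinates has variance $\cong N_t^5\theta^6\cong\lambda^5t^5\cdot t^6/\lambda^6=t^{11}/\lambda$. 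The Poisson fluctuation of $N_t$ (the analogue of $D_t$) contributes $(\E T_t/\lambda t)^2\lambda t\cong t^{11}/\lambda$ as well. Higher Hoeffding components are of lower order.
\end{itemize}
Together these give $\var(T_t)\cong t^6+t^{11}/\lambda=t^6(1+t^5/\lambda)$. The $\wt T^{(1)},\wt T^{(2)}$ terms are negligible whenever $t\to\infty$, since $t^7/\lambda,t^9/\lambda\ll t^6+t^{11}/\lambda$.

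\emph{Part (i).} If $t\to c$, then $\theta\to0$ and every fluctuation term other than $\wt T^{(3)}_t$ vanishes, including those of $\cM^T_t$. For the Poisson approximation I will apply the Stein--Chen method conditionally on $\cF_t$ with the dissociated dependency graph on triangles, two triangles being dependent when they share an edge. The bound is $b_1+b_2$. Here $b_1\lesssim\sum_{\text{tri}}\sum_{\text{tri}'\sim\text{tri}}p\,p'$ is controlled by four-vertex edge-sharing configurations of expected weight $\lambda^4t^4\theta^6\to0$. The term $b_2$ counts pairs of triangles sharing an edge (five edges on four vertices), giving $\lambda^4t^4\theta^5=t^9/\lambda\to0$. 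It then remains to replace $\poi(\cM^T_t)$ by $\poi(\E T_t)$, using $\dtv\le\E|\cM^T_t-\E T_t|\lesssim\sqrt{t^{11}/\lambda}\to0$, together with $\E T_t\to c^6/(2\cdot3^4)$.

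\emph{Part (ii).} When $t\to\infty$ there are two regimes, and I expect a unified argument by joint asymptotic normality. First, conditionally on $\cF_t$, $\wt T^{(3)}_t/\sqrt{\var(\cdot\mid\cF_t)}$ is asymptotically normal; I will prove this by the exchangeable pair of Theorem~\ref{th:st_exch}, resampling a single $\xi_{ij}$ exactly as in Section~\ref{sec:clt}. The resulting error terms are controlled by counts of unions of two or three triangles sharing edges, each of which has expectation $o(t^6)$, and the conditional variance concentrates around $\cong t^6$. Second, the linear Hoeffding projection of $\cM^T_t$ together with the $N_t$-fluctuation is a sum of i.i.d.\ terms plus a Poisson term, and is asymptotically normal by the classical CLT. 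Because the first piece is conditionally centered given $\cF_t$, the two limits combine through characteristic functions, giving a Gaussian with variance equal to the sum. The main obstacle is this conditional Stein step: bounding $\E|R_2|$, the concentration of the conditional variance $\sum h_{ij}h_{ik}h_{jk}(1-\cdots)$ relative to $t^6$, and the cubic term. I will handle these with the graphical counting of overlapping triangle configurations, weighting each vertex by $\lambda t$ and each edge by $\theta$, and checking that every overlapping configuration is $o(t^6)$ when $t=O(\lambda)$.
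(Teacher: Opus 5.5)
Your route is the paper's: the same centered-edge decomposition given $\cF_t$, the same variance orders $t^6$, $t^7/\lambda$, $t^9/\lambda$, $t^{11}/\lambda$, and conditional Stein--Chen with edge-sharing dependency neighbourhoods for (i). For (ii) you also combine, as the paper does, a conditional CLT for the $\wt T^{(3)}_t$ part with the U-statistic/Poisson CLT for the conditional mean.

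The step that will not go through is the one you deferred, and the problem lies in the statement, not your method: the Poisson parameter is not $c^6/(2\cdot 3^4)$. Order three uniforms as $V_{(1)}<V_{(2)}<V_{(3)}$. The three pairwise minima are then $V_{(1)},V_{(1)},V_{(2)}$, so
\[
\E\bigl[(U_1\wedge U_2)(U_1\wedge U_3)(U_2\wedge U_3)\bigr]=\E\bigl[V_{(1)}^2V_{(2)}\bigr]=6\int_0^1\!\!\int_0^{v_2}v_1^2\,v_2(1-v_2)\,dv_1\,dv_2=\frac{1}{15}.
\]
Therefore $\E T_t=\frac{(\lambda t)^3}{6}\cdot\frac{\theta^3}{15}(1+O(\theta))=\frac{t^6}{90}(1+O(t/\lambda))\to c^6/90$. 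Your placeholder $\frac{3}{5\cdot 27}$ is off by a factor $3$. The stated value $c^6/(2\cdot 3^4)=\frac{c^6}{6}\bigl(\E[U_1\wedge U_2]\bigr)^3$ is what one gets by treating $h_{12},h_{13},h_{23}$ as independent; in fact they share the $U_i$'s and are positively correlated. Since $\dtv(\poi(a),\poi(b))>0$ for $a\neq b$, part (i) cannot hold as written. Your Stein--Chen argument proves it with $\poi(c^6/90)$; the paper's sketch only records $\E T_t\cong t^6$ and never checks the constant.

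A smaller correction in (ii): it is not true that every overlapping configuration is $o(t^6)$ throughout $t=O(\lambda)$.
\begin{itemize}
\item Two triangles sharing an edge carry weight $\cong t^9/\lambda$; this is your own $b_2$ and $\var(\wt T^{(1)}_t)$. It exceeds $t^6$ once $t\gg\lambda^{1/3}$.
\item Bounding the cubic term with absolute values, as in Section~\ref{sec:clt}, produces three triangles through one edge, of weight $t^{12}/\lambda^2$. This is not even $o(t^9)$ when $t\gtrsim\lambda^{2/3}$.
\end{itemize}
This is easy to repair. You only need the conditional CLT for $\wt T^{(3)}_t$ when $t\lesssim\lambda^{1/5}$, where all these overlaps are indeed $o(t^6)$. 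For larger $t$, $\wt T^{(3)}_t$ is negligible in $L^2$ against $\sqrt{t^{11}/\lambda}$.

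An alternative, closer to what the paper invokes, avoids the exchangeable pair entirely. Your Stein--Chen bound gives $\dtv\bigl(\mathcal{L}(T_t\mid\cF_t),\poi(\cM^T_t)\bigr)\lesssim(b_1+b_2)/\cM^T_t\approx t^3/\lambda\to0$ whenever $t\ll\lambda^{1/3}$. A Poisson variable with mean $\cM^T_t\cong t^6\to\infty$ is asymptotically normal. The ranges $t\ll\lambda^{1/3}$ and $t\gg\lambda^{1/5}$ overlap, so together they cover all of $t\to\infty$, $t=O(\lambda)$.
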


Now, following similar steps to the number-of-edges analysis, we first analyze the triangle counts $T_t$ conditioned on the $\gs$-field $\cF_t$, which is determined by the number of vertices and their arrival times. 
We decompose $T_t$ conditioned on $\cF_{t}$ as
\[
T_t =\Delta_{t}+ T_{t,1}+T_{t,2} + T_{t,3},
\]
where 
\begin{align}
\Delta_{t}&:=\sum_{1\le i<j<k\le N_t}h_{ij}h_{ik}h_{jk}\label{RGIV_term:central0},\\
T_{t,1}&:=\sum_{i<j\le N_t}\left(\sum_{k\neq i,j}h_{ik}h_{jk}\right)\overline{\go}_{ij},\label{RGIV_term:central1}\\
T_{t,2}&:=\sum_{i<j\le N_t}h_{ij}\sum_{k\neq i,j} \overline{\go}_{ik}\overline{\go}_{jk},\label{RGIV_term:central2}\\
\text{and } \quad T_{t,3}&=\sum_{i<j<k\le N_t}\overline{\go}_{ij}\overline{\go}_{ik}\overline{\go}_{jk}\label{RGIV_term:central3}.
\end{align}
Recall that, $h_{ij}=h_\theta(U_i,U_j)$. Clearly, we have 
\[
\E T_t=\E(h_\theta(U_1,U_2)h_\theta(U_2,U_3)h_\theta(U_1,U_3))\cdot \E\binom{N_t}{3} \cong t^6
\]
when $t=O(\lambda)$. It is straightforward to verify that
\begin{align*}
    \E\var(T_{t,1}\mid \cF_t) 
    &\cong \E\sum_{1\le i<j\le N_t} h_{ij}\left(\sum_{k\neq i,j}h_{ik}h_{jk}\right)^2  \le \E(N_t^4 h_{12}^5) \cong t^9/\lambda,\\
    \E\var(T_{t,2}\mid \cF_t) 
    &\cong \E\sum_{1\le i<j\le N_t} h_{ij}^2 \left(\sum_{k\neq i,j}h_{ik}h_{jk}\right)  \le \E(N_t^3 h_{12}^4) \cong t^7/\lambda,\text{ and }\\
    \E\var(T_{t,3}\mid \cF_t)&\cong \E(N_t^3 h_{12}h_{23}h_{13})\cong t^6. 
\end{align*}
Now, conditionally on $N_t$, $\Delta_t$ is a rank three U-statistic with $\E\var(\Delta_t\mid N_t) \cong \E N_t^5 (t/\lambda)^6\cong t^{11}/\lambda.$ Finally, we have $\var(\E(T_t\mid N_t))\cong (t/\lambda)^6\var(N_t(N_t-1)(N_t-2))\cong t^{11}/\lambda.$
Thus, when $t\ll \lambda^{1/5}$, $T_{t,3}$ dominates, and  when $\lambda^{1/5}\ll t\lesssim \lambda$, $\gD_t$ dominates. Proof of the Poisson and Gaussian limit follows a similar approach to the edge-count case, using the classical Stein--Chen method for Poisson approximation, CLT for U-Statistics (see~\cite{UCLT}), and CLT for a polynomial of Poisson random variable.

\bibliographystyle{alpha}
\bibliography{pam} 
\end{document}